\definecolor{orcidlogocol}{HTML}{A6CE39}
\theoremstyle{definition}
\newtheorem{theorem}{Theorem}[subsection]
\newtheorem{definition}[theorem]{Definition}
\newtheorem*{notation*}{Notation}
\newtheorem{example}[theorem]{Example}
\newtheorem{remark}[theorem]{Remark}
\newtheorem{corollary}[theorem]{Corollary}
\newtheorem{proposition}[theorem]{Proposition}
\newtheorem{lemma}[theorem]{Lemma}
\newcommand{\ov}{\overline}
\renewcommand{\O}{\mathcal O}
\newcommand{\N}{\mathcal N}
\newcommand{\z}{\mathbf z}
\newcommand{\K}{K}
\newcommand{\T}{\bbT} 
\newcommand{\Tbun}{\sT} 
\newcommand{\mTbun}{\Tbun^{\text{metr}}}
\newcommand{\aTbun}{\widehat{\Tbun}}
\newcommand{\toricbun}{\sX} 
\newcommand{\toricvar}{X} 
\newcommand{\toricmod}{\mathscr{X}} 
\newcommand{\atoricbun}{\widehat{\toricbun}}
\newcommand{\catTbun}{\text{TorBun}} 
\newcommand{\mcatTbun}{\catTbun^{\text{metr}}}
\newcommand{\acatTbun}{\widehat{\catTbun}}
\newcommand{\weightv}{n_v}
\newcommand{\weighti}{n_i}
\let\svthefootnote\thefootnote
\newcommand\freefootnote[1]{%
  \let\thefootnote\relax%
  \footnotetext{#1}%
  \let\thefootnote\svthefootnote%
}
\newcommand{\B}{B}
\newcommand{\mrho}{\rho^{\text{metr}}}
\newcommand{\arho}{\widehat{\rho}}
\newcommand{\ac}{\widehat{c}}
\newcommand{\aP}{\widehat{\calP}}
\newcommand{\aI}{\widehat{I}}
\newcommand{\aD}{\widehat{D}}
\newcommand{\aF}{\widehat{F}}
\newcommand{\IP}{\calP^I}
\newcommand{\catPic}{\mathcal Pic}
\newcommand{\mcatPic}{\mathcal Pic^{\text{metr}}}
\newcommand{\acatPic}{\widehat{\mathcal Pic}}
\newcommand{\Bun}{\text{Bun}}
\newcommand{\intPic}{\widehat{\Pic}^{\text{int}}}
\newcommand{\Q}{\mathbb{Q}}
\newcommand{\Kbar}{\bar{K}}
\newcommand{\A}{\mathbb{A}}
\newcommand{\one}{\mathbbm{1}}
\newcommand{\iso}{\cong}
\newcommand{\ArChow}{\widehat{\textrm{CH}}} 
\newcommand{\ExtChow}{\widecheck{\textrm{CH}}} 
\newcommand{\adeg}{\widehat{\deg}}
\newcommand{\mPic}{\Pic^{\text{metr}}}
\newcommand{\mDiv}{\Div^{\text{metr}}}
\newcommand{\aPic}{\widehat{\Pic}}
\newcommand{\mVect}{\Vect^{\text{metr}}}
\newcommand{\aVect}{\widehat{\Vect}}
\newcommand{\sB}{\mathcal{B}}
\newcommand{\sE}{\mathcal{E}}
\newcommand{\sF}{\mathcal{F}}
\newcommand{\sH}{\mathcal{H}}
\newcommand{\sL}{\mathcal{L}}
\newcommand{\sM}{\mathcal{M}}
\newcommand{\sO}{\mathcal{O}}
\newcommand{\sQ}{\mathcal{Q}}
\newcommand{\sS}{\mathcal{S}}
\newcommand{\sT}{\mathcal{T}}
\newcommand{\sX}{\mathcal{X}}
\newcommand{\sY}{\mathcal{Y}}
\newcommand{\sZ}{\mathcal{Z}}
\newcommand{\bbC}{\mathbb{C}}
\newcommand{\bbP}{\mathbb{P}}
\newcommand{\bbQ}{\mathbb{Q}}
\newcommand{\bbR}{\mathbb{R}}
\newcommand{\bbZ}{\mathbb{Z}}
\newcommand{\bbG}{\mathbb{G}}
\newcommand{\bbT}{\mathbb{T}}
\newcommand{\scrL}{\mathscr{L}}
\newcommand{\calP}{\mathcal{P}}
\newcommand{\calV}{\mathcal{V}}
\newcommand{\into}{\hookrightarrow}
\renewcommand{\div}{\text{div}}
\DeclareMathOperator{\Div}{Div}
\DeclareMathOperator{\Pic}{Pic}
\DeclareMathOperator{\im}{im}
\DeclareMathOperator{\id}{id}
\DeclareMathOperator{\Hom}{Hom}
\DeclareMathOperator{\Spec}{Spec}
\DeclareMathOperator{\Supp}{Supp}
\DeclareMathOperator{\an}{an}
\DeclareMathOperator{\vol}{vol}
\DeclareMathOperator{\ess}{ess}
\DeclareMathOperator{\abs}{abs}
\DeclareMathOperator{\Ext}{Ext}
\DeclareMathOperator{\can}{can}
\DeclareMathOperator{\Fun}{Fun}
\DeclareMathOperator{\Cont}{Cont}
\DeclareMathOperator{\colim}{colim}
\DeclareMathOperator{\QCoh}{QCoh}
\DeclareMathOperator{\Vect}{Vect}
\newcommand{\blank}{{\cdot}}
\DeclarePairedDelimiter\norm{\lVert}{\rVert}%
\let\oldnorm\norm
\def\norm{\@ifstar{\oldnorm}{\oldnorm*}}
\newcommand{\defeq}{\vcentcolon=}
\newcommand\restr[2]{{
  \left.\kern-\nulldelimiterspace 
  #1 
  \vphantom{\big|} 
  \right|_{#2} 
  }}
\let\save@mathaccent\mathaccent
\newcommand*\if@single[3]{%
  \setbox0\hbox{${\mathaccent"0362{#1}}^H$}%
  \setbox2\hbox{${\mathaccent"0362{\kern0pt#1}}^H$}%
  \ifdim\ht0=\ht2 #3\else #2\fi
  }
\newcommand*\rel@kern[1]{\kern#1\dimexpr\macc@kerna}
\newcommand*\wide@bar[2]{\if@single{#1}{\wide@bar@{#1}{#2}{1}}{\wide@bar@{#1}{#2}{2}}}
\newcommand*\wide@bar@[3]{%
  \begingroup
  \def\mathaccent##1##2{%
    \let\mathaccent\save@mathaccent
    \if#32 \let\macc@nucleus\first@char \fi
    \setbox\z@\hbox{$\macc@style{\macc@nucleus}_{}$}%
    \setbox\tw@\hbox{$\macc@style{\macc@nucleus}{}_{}$}%
    \dimen@\wd\tw@
    \advance\dimen@-\wd\z@
    \divide\dimen@ 3
    \@tempdima\wd\tw@
    \advance\@tempdima-\scriptspace
    \divide\@tempdima 10
    \advance\dimen@-\@tempdima
    \ifdim\dimen@>\z@ \dimen@0pt\fi
    \rel@kern{0.6}\kern-\dimen@
    \if#31
      \overline{\rel@kern{-0.6}\kern\dimen@\macc@nucleus\rel@kern{0.4}\kern\dimen@}%
      \advance\dimen@0.4\dimexpr\macc@kerna
      \let\final@kern#2%
      \ifdim\dimen@<\z@ \let\final@kern1\fi
      \if\final@kern1 \kern-\dimen@\fi
    \else
      \overline{\rel@kern{-0.6}\kern\dimen@#1}%
    \fi
  }%
  \macc@depth\@ne
  \let\math@bgroup\@empty \let\math@egroup\macc@set@skewchar
  \mathsurround\z@ \frozen@everymath{\mathgroup\macc@group\relax}%
  \macc@set@skewchar\relax
  \let\mathaccentV\macc@nested@a
  \if#31
    \macc@nested@a\relax111{#1}%
  \else
    \def\gobble@till@marker##1\endmarker{}%
    \futurelet\first@char\gobble@till@marker#1\endmarker
    \ifcat\noexpand\first@char A\else
      \def\first@char{}%
    \fi
    \macc@nested@a\relax111{\first@char}%
  \fi
  \endgroup
}
\def\house#1{{%
    \setbox0=\hbox{$#1$}
    \vrule height \dimexpr\ht0+1.4pt width .4pt depth \dp0\relax
    \vrule height \dimexpr\ht0+1.4pt width \dimexpr\wd0+2pt depth \dimexpr-\ht0-1pt\relax
    \llap{$#1$\kern1pt}
    \vrule height \dimexpr\ht0+1.4pt width .4pt depth \dp0\relax
}}
\def\House#1{{%
    \setbox0=\hbox{$#1$}
    \vrule height \dimexpr\ht0+1.4pt width .4pt depth \dp0\relax
    \vrule height \dimexpr\ht0+1.4pt width \dimexpr\wd0+2pt depth \dimexpr-\ht0-1pt\relax
    \llap{$#1$\kern1pt}
    \vrule height \dimexpr\ht0+1.4pt width .4pt depth \dp0\relax
}}
\def\housealp{{%
    \setbox2=\hbox{$\alpha$}
    \vrule height \dimexpr\ht2+1.75pt width .4pt depth \dp2\relax
    \vrule height 6.05pt width \dimexpr\wd2+2pt depth -5.65pt
    \llap{$\alpha$\kern1pt}
    \vrule height \dimexpr\ht2+1.75pt width .4pt depth \dp2\relax
}}
\newcommand{\xdasharrow}[2][->]{
\tikz[baseline=-\the\dimexpr\fontdimen22\textfont2\relax]{
\node[anchor=south,font=\scriptsize, inner ysep=1.5pt,outer xsep=2.2pt](x){#2};
\draw[shorten <=3.4pt,shorten >=3.4pt,dashed,#1](x.south west)--(x.south east);
}
}
\newcommand{\eqn}[1]{\begin{equation*}#1\end{equation*}}
\newcommand{\aDiv}{\widehat{\Div}}
\newtheorem{thm_}{Theorem}[section]
\newtheorem{lemma_}[thm_]{Lemma}
\newtheorem{prop_}[thm_]{Proposition}
\newtheorem{conj_}[thm_]{Conjecture}
\newtheorem{cor_}[thm_]{Corollary}
\theoremstyle{definition}
\newtheorem{eg_}[thm_]{Example}
\newtheorem{def_}[thm_]{Definition}
\newtheorem{rk_}[thm_]{Remark}
\newtheorem{qu_}[thm_]{Question}
\title{Arakelov geometry of toric bundles: Okounkov bodies and BKK}
\author{Nuno Hultberg}
\subjclass[2020]{Primary 14G40 Secondary 14M25, 52B20}
\keywords{toric variety, toric bundle, semiabelian variety, Arakelov geometry, Okounkov body, polytope}
\address{Nuno Hultberg. University of Copenhagen, Institute of Mathematics, Universitetsparken 5, 2100 Copenhagen, Denmark;
ORCiD: \href{https://orcid.org/0000-0003-0097-0499}{orcid.org/0000-0003-0097-0499}
}
\email{nh@math.ku.dk}
\begin{document}

\maketitle

\begin{abstract}
This article introduces the study of toric bundles and the morphisms between them from the perspective of adelic fibre bundles, as introduced by Chambert-Loir and Tschinkel. We study the Okounkov bodies and Boucksom-Chen transforms of suitable adelic line bundles on toric bundles. Finally, we prove an arithmetic analogue of a formula for intersection numbers due to Hofscheier, Khovanskii and Monin. We apply this to the study of compactifications of semiabelian varieties, whose height and successive minima we compute. This extends computations of Chambert-Loir to arbitrary toric compactifications.
\end{abstract}

\freefootnote{An earlier version of this article was included in the author's PhD thesis. The main difference is a generalization of Theorem \ref{thm:intersection_formula} beyond the case of a regular base scheme.}

\section{Introduction}

Let $G$ be a semiabelian variety over a field $\K$. A semiabelian variety is canonically an extension of an abelian variety $A$ by a torus $\T$:
\[\label{ses}
\begin{tikzcd}
0 \arrow[r] & \bbT  \arrow[r, "i"] & G \arrow[r, "\pi"] & A \arrow[r] & 0.
\end{tikzcd}
\]
After possibly passing to a finite extension of $\K$, we assume that $\T$ is split, i.e.\ $\bbT\cong \bbG_m^t$. We refer to $\T$ as the torus part of $G$ and to $A$ as its abelian quotient. Abstractly, we may view $G$ as a $\T$-torsor over $A$. This perspective clarifies the canonical identification $\Ext^1_{\K}(A,\bbG_m) \cong A^\vee(\K)$ given by the Weil-Barsotti formula, see \cite[\S III.8]{Oort_commutative_group_schemes}.

Many methods of algebraic and arithmetic geometry rely on the properness of the studied varieties. If $\T$ is not trivial, $G$ is not proper. We obtain a compactification $\ov{G}$ of $G$ by considering it as a $\T$-torsor and applying a toric compactification $X$ of $\T$. This naturally places us in the setting of toric bundles. We concretize the constructions of \cite{chambert-loir_tschinkel_arithmetic_torsors} in this setting. Details of the upcoming discussion are contained in Section \ref{sec:toric_bundles}.

Let $\B$ be a variety over $\K$ and let $\sT$ be a $\T$-torsor over $B$, for a split torus $\T$. Let $N= \Hom(\bbG_m,\T)$ be the lattice of co-characters and $M = \Hom(\T,\bbG_m)$ the lattice of characters of $\T$. A fan $\Sigma$ in $N_\bbR$ defines a toric variety $\toricvar_\Sigma$ containing $\T$. We define a toric bundle $\toricbun_\Sigma = (\sT \times X_\Sigma)/\T$ by using Zariski descent, where $x \in \T$ acts by $(x,x^{-1})$. The fibres of $\toricbun_\Sigma \to \B$ can be identified with $\toricvar_\Sigma$. Such an identification is canonical up to the action of $\T$. Furthermore, a $\T$-invariant Cartier divisor $D$ on $\toricvar_\Sigma$ gives rise to a $\T$-invariant Cartier divisor $\rho(D)$ on $\toricbun_\Sigma$. More precisely, there is a map $\rho: \Div_\T(\toricvar_\Sigma) \to \Div_\T(\toricbun_\Sigma)$ which after restricting to a fibre identified with $\toricvar_\Sigma$ yields the identity.

The torus bundle $\sT$ induces a homomorphism $c:M \to \Pic(B)$ by sending $m \in M$ to $(\sT \times \bbG_m)/\T$, where the action of $x\in\T$ is by $(x,\chi^{-m}(x))$. In fact, this defines an isomorphism between the group of isomorphism classes of $\T$-torsors and $\Hom(M,\Pic(\B))$. If $K$ is endowed with an absolute value or is a global field, we define metrized and adelic torus bundles. Their isomorphism classes will be isomorphic to $\Hom(M,\mPic(\B))$ and $\Hom(M,\aPic(\B))$ respectively. Given a metrized/adelic torus bundle $\aTbun$ with underlying torus bundle $\Tbun$, we define analogous constructions $\mrho:\mDiv_\T(\toricvar_\Sigma) \to \mDiv_\T(\toricbun_\Sigma)$ and $\arho:\aDiv_\T(\toricvar_\Sigma) \to \aDiv_\T(\toricbun_\Sigma)$. We denote the homomorphism describing its isomorphism class by $\ac:M \to \aPic(\B)$.

We may study the group structure on $G$ by exhibiting it as a map of toric bundles, a notion we will introduce. Using the theory of toric bundles we can extend maps on $G$ to maps to its compactifications. For instance, let $[n]$ denote the multiplication by $n$ on $G$ and $G_\Sigma$ denote the compactification of $G$ as a toric bundle associated to the fan $\Sigma$. Then, the morphism $[n]$ extends to an endomorphism $[n]_\Sigma$ of $G_\Sigma$.  Furthermore, the multiplication on $G$ extends to an action of $G$ on $G_\Sigma$. The multiplication $[n]_\Sigma$ on $G_\Sigma$ does not give rise to a polarized dynamical system unless $G$ is a torus or an abelian variety. Instead, canonical heights are defined in terms of a toric contribution $\bar{\sL}$ and a contribution from the abelian variety $\bar{\pi}^* \bar{\sM}$. Here $\bar{\sM}$ is a canonically metrized ample symmetric line bundle on the abelian variety $A$. The toric contribution is defined by endowing Cartier divisors of the form $\rho(D)$ with canonical metrics with respect to $[n]_\Sigma$, which we denote by $\rho(D)^{\can}$. We will prove that $\rho(D)^{\can}$ is isometric to $\arho(D^{\can})$, when endowing the line bundles on the abelian variety with their canonical metric. This framework enables us to conceptualize the computations in \cite{cl2}, which serves as a major inspiration for this work.

\subsection{Statement of results}
We say that $K$ is a global field if it is a number field or the function field of a geometrically irreducible curve $\sS$ over a field $k$. In the number field case, we denote by $\sS$ the spectrum of the ring of integers. For details on notation we refer to Section \ref{sec:prelim}.

Let $\aTbun$ be an integrable torus bundle (cf.\ Definition \ref{defi:torus_bundle}) over a proper variety $\B$ over a global field $\K$ with associated homomorphism $\ac:M \to \aPic(\B)$. Let $\Delta\subset M_\bbR$ be a rational polytope and $\Sigma$ a complete rational fan that refines the normal fan of $\Delta$. Denote the toric bundle defined by these data by $\toricbun$. Let $D$ be the toric Cartier divisor defined by $\Delta$. Let $\ov{\Delta}$ denote the datum of a polytope $\Delta$ together with a collection $(\theta_v)$ of local roof functions on $\Delta$ with global roof function $\theta$. It gives rise to a semipositive toric Cartier divisor $\ov{D}$. We define $\rho(\Delta) \defeq \rho(D)$ and $\arho(\ov{\Delta}) \defeq \arho(\ov{D})$.

We briefly recall the notion of Zhang minima. Let $\bar{L}$ be an adelically metrized line bundle on a geometrically irreducible variety $X$ of dimension $d$. For $\lambda \in \bbR$, define the set $X_{\ov{L}}(\lambda)$ to be the Zariski closure in $X$ of the set $\{ x \in X(\Kbar)\mid h_{\ov{L}}(x) \leq \lambda\}$.

\begin{definition}
    The $i$-th \emph{Zhang minimum} of $\bar{L}$ is defined to be
    \begin{equation}
        \zeta_i(\ov{L}) = \inf \{ \lambda\mid  \dim X_{\ov{L}}(\lambda) \geq i-1\}
    \end{equation}
    in \cite{Zhang_thesis_inequality}. The \emph{absolute minimum} $\zeta_{\abs}(\ov{L})$ is defined to be the first Zhang minimum, i.e.\ the infimum over the $\lambda$ such that $X_{\ov{L}}(\lambda) \neq \emptyset$. The \emph{essential minimum} is the $(d+1)$-st Zhang minimum, i.e.\ the infimum over the $\lambda$ such that $X_{\ov{L}}(\lambda) = X$.
\end{definition}

We compute the essential and absolute minima of adelically metrized line bundles on toric bundles  by studying their Okounkov body and Boucksom-Chen transform with respect to a suitable flag. 

\begin{restatable}{rthm}{minima}\label{thm:minima}
    Let $\ov{L}$ be an adelic line bundle on $\B$ such that $\ov{L} + \ac(m)$ is geometrically big for some $m \in \Delta$. Let $\Delta^\circ$ denote the interior of $\Delta$. Then, we have the following formula for the essential minimum of $\arho(\ov{\Delta}) + \pi^*\ov{L}$ on $\toricbun$:
    \[
    \zeta_{\ess}(\arho(\ov{\Delta}) + \pi^*\ov{L}) = \sup_{m \in \Delta} \left\{\zeta_{\ess}(\ov{L} + \ac(m))+\theta(m)\right\}.
    \]
    If in addition $\ov{L} + \ac(m)$ is semipositive for all $m \in \Delta$,
    \[
    \zeta_{\abs}(\arho(\ov{\Delta}) + \pi^*\ov{L}) = \inf_{m \in \Delta^\circ} \left\{\zeta_{\abs}(\ov{L} + \ac(m))+\theta(m)\right\}.
    \]
\end{restatable}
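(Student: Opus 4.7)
\medskip
\noindent\textbf{Proof plan.} The strategy is to reduce both formulas to the structural results on Okounkov bodies and Boucksom--Chen transforms of adelic line bundles on toric bundles established earlier in the paper, then apply the general identification of minima with invariants of the concave transform.

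First, I invoke the description of the Okounkov body and Boucksom--Chen transform of $\arho(\ov{\Delta}) + \pi^*\ov{L}$ with respect to an admissible flag on $\toricbun$ obtained by extending a flag on $\B$ with a toric flag attached to a chosen maximal cone of $\Sigma$: the Okounkov body fibres over $\Delta$, with fibre over $m \in \Delta$ equal to the Okounkov body of $\ov{L} + \ac(m)$ on $\B$, and the concave transform satisfies
\[
c_{\arho(\ov{\Delta}) + \pi^*\ov{L}}(m, y) = \theta(m) + c_{\ov{L} + \ac(m)}(y).
\]
The hypothesis that $\ov{L} + \ac(m_0)$ is geometrically big for some $m_0 \in \Delta$ ensures that $\arho(\ov{\Delta}) + \pi^*\ov{L}$ itself is big, so that these invariants are well-defined.

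For the essential minimum I then invoke the general identification of the essential minimum of a big adelic line bundle with the supremum of its concave Boucksom--Chen transform on its Okounkov body. Combined with the fibration description and the analogous identification on $\B$, this gives
\[
\zeta_{\ess}(\arho(\ov{\Delta}) + \pi^*\ov{L}) = \sup_{m \in \Delta}\sup_{y}\bigl(\theta(m) + c_{\ov{L}+\ac(m)}(y)\bigr) = \sup_{m \in \Delta}\bigl(\theta(m) + \zeta_{\ess}(\ov{L}+\ac(m))\bigr).
\]

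For the absolute minimum I argue directly on points. The fibrewise restriction of $\arho(\ov{\Delta})$ to $\toricvar_\Sigma$ is a semipositive adelic toric divisor with global roof $\theta$; by the toric formula for the absolute minimum on a toric variety (infimum of the roof on the interior of its polytope), together with the tower decomposition of heights
\[
h_{\arho(\ov{\Delta}) + \pi^*\ov{L}}(P) = \theta(m_P) + h_{\ov{L} + \ac(m_P)}(\pi(P))
\]
for $P$ in the open torus bundle (with $m_P \in M_\bbR$ the character class of $P$), the assumption that $\ov{L} + \ac(m)$ is semipositive for all $m \in \Delta$ yields the lower bound $h_{\arho(\ov{\Delta}) + \pi^*\ov{L}}(P) \geq \theta(m_P) + \zeta_{\abs}(\ov{L}+\ac(m_P))$, and hence $\geq \inf_{m \in \Delta^\circ}(\theta(m) + \zeta_{\abs}(\ov{L}+\ac(m)))$. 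Points in boundary $\T$-orbits of the fibres are absorbed by continuity of $\theta$ on $\Delta$, since their heights correspond to roof functions of faces, whose infima are captured by the interior infimum in the limit. The reverse inequality is obtained by lifting, for each $m \in \Delta^\circ$ and $\epsilon > 0$, an $\epsilon$-approximate minimiser $Q$ of $h_{\ov{L}+\ac(m)}$ to a point $P$ in the fibre over $Q$ whose toric contribution is within $\epsilon$ of $\theta(m)$.

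The main obstacle lies in the absolute minimum: one must ensure that the height decomposition holds as a genuine equality on the open torus bundle rather than merely an inequality, which is precisely what requires the semipositivity of $\ov{L} + \ac(m)$ for all $m \in \Delta$ so that fibrewise heights stay controlled and minimising sequences cannot collapse pathologically onto boundary strata. The restriction to $\Delta^\circ$ then reflects the analogous interior restriction in the single-fibre toric absolute minimum formula, with the boundary faces of $\Delta$ absorbed by continuity of $\theta$.
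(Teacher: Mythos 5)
For the essential minimum, your argument coincides in substance with the paper's: compute the Okounkov body and Boucksom--Chen transform of $\arho(\ov{\Delta}) + \pi^*\ov{L}$ via the fibration over $\Delta$ (Theorems~\ref{thm:geometric_okounkov_body} and~\ref{thm:arithmetic_okounkov_body}), then apply the identification of the essential minimum with the supremum of the transform (Theorem~\ref{thm:ballay}), using concavity and upper semicontinuity to pass from the big locus of $\Delta$ to all of $\Delta$.

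For the absolute minimum the paper stays within the same Boucksom--Chen framework, invoking the infimum formula of Theorem~\ref{thm:ballay} on the total space and on each fibre together with a nefness argument and upper semicontinuity. You instead propose a direct argument via heights of points, which would be a genuinely different route, but it rests on the claimed decomposition
\[
h_{\arho(\ov{\Delta}) + \pi^*\ov{L}}(P) = \theta(m_P) + h_{\ov{L}+\ac(m_P)}(\pi(P))
\]
for a ``character class'' $m_P \in M_\bbR$ of $P$, and this is not correct. A point $P$ in the open torus part of a fibre has no associated point of $M_\bbR$; it has, at each place $v$, a tropicalization in $N_\bbR$, and the toric contribution to its height is the weighted sum of Green's functions evaluated at those tropicalizations. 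The Green's functions live on $N_\bbR$ and the roof function $\theta$ lives on $\Delta\subset M_\bbR$; they are related by Legendre--Fenchel duality, not by evaluating both at a common argument. So the equality you write is meaningless, and with it both your lower bound $h(P)\geq\theta(m_P)+\zeta_{\abs}(\ov{L}+\ac(m_P))$ and the lifting argument for the reverse inequality collapse. A direct route can be salvaged by applying the toric absolute-minimum formula to the restriction to each fibre (whose roof, as noted in Section~\ref{sec:suc_min}, is $\theta(m)+h_{\aTbun(m)}(b)$), interchanging the two infima, and dealing separately with boundary strata --- but this is essentially re-deriving what the Boucksom--Chen argument already gives, and is more delicate than your sketch suggests.
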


\begin{remark}
    One may replace $\inf_{m \in \Delta^\circ}$ by $\inf_{m \in \Delta}$ provided $\ov{L} + \ac(m)$ is geometrically big for all $m \in \Delta$.
\end{remark}

We are able to compute the height filtration of a toric bundle in terms of data from the base variety, see Section \ref{sec:suc_min}. The height filtration has previously been computed explicitly for toric varieties in \cite{sucmin} and in the case of flag varieties over function fields in \cite{fan2024heightfiltrationsbaseloci}. Without such kind of additional structure, the problem of computing height filtrations seems very hard.

In addition to computing the essential and absolute minima, we seek a combinatorial formula for arithmetic intersection numbers on toric bundles inspired by \cite{toricbundles}. We will refer to this as the arithmetic bundle BKK theorem. It is a common generalization of the arithmetic BKK theorem \cite[Theorem 5.2.5]{toricheights} and the bundle BKK theorem \cite[Theorem 4.1]{toricbundles}. The study of Okounkov bodies used in the proof of Theorem \ref{thm:minima} suffices to obtain the arithmetic bundle BKK theorem for complete intersection cycles. To prove it for arbitrary cycles requires other methods.

Let $\atoricbun$ be an adelic integrable projective toric bundle of relative dimension $t$ with smooth generic fibre over a smooth projective base variety $\B$ of dimension $g$. Let $\sB$ be a flat, projective model of $\B$ over $\sS$. Let $A^*(\blank)$ denote the operational arithmetic Chow cohomology introduced in Section \ref{subsec:chow_homology}. Let $\gamma \in A^{g+1-i}(\sB)$ and denote by $[\infty]\in \ArChow^1(\sB)_\bbR$ the class of a trivial Cartier divisor endowed with constant Green's functions at all places such that $h_{[\infty]}(x) = 1$ for all $x \in \B(\Kbar)$.

\begin{restatable}{rthm}{arithmeticbundlebkk}
    \label{thm:intersection_formula}
    The intersection numbers below are well-defined and the following identity holds:
    \[
    i!\arho(\ov{\Delta})^{t+i}\pi^*\gamma = (t+i)!\int_{\Delta} (\ac(m)+\theta(m)[\infty])^i\gamma dm.
    \]
\end{restatable}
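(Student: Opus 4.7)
The plan is to proceed in two stages: first establish the identity for $\gamma$ a product of adelic divisor classes using the Okounkov body / Boucksom--Chen analysis of Theorem \ref{thm:minima}, and then extend to arbitrary operational classes by reformulating the statement as a computation of the pushforward $\pi_*(\arho(\ov{\Delta})^{t+i})\in \ArChow^i(\sB)_\bbR$ along $\pi:\atoricbun\to\sB$.

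Well-definedness of the right-hand side must be checked first. The classifying map $\ac$ is a homomorphism, so $\ac(m)$ extends $\bbR$-linearly to $m\in M_\bbR$, while the global roof function $\theta$ is concave and piecewise affine on $\Delta$. Consequently $(\ac(m)+\theta(m)[\infty])^i\gamma$ is a bounded piecewise-polynomial function of $m$ with values in $\ArChow^{g+1}(\sB)_\bbR$, hence Lebesgue integrable on the compact polytope $\Delta$. Both sides of the formula are $\bbR$-linear in $\gamma$, so I may assume $\gamma$ is pure of codimension $g+1-i$.

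For the complete-intersection case I would write $\gamma=\ov L_1\cdots\ov L_{g+1-i}$ for big semipositive adelic classes on $\sB$ (extending to arbitrary products by multilinearity and density). Choose a valuative flag on $\atoricbun$ adapted to the fibration: first lift a flag of length $g+1-i$ from $\sB$ through $\pi^*$, then complete it with a $\T$-invariant flag of length $t+i$ in the toric direction. As in the proof of Theorem \ref{thm:minima}, the Okounkov body of $\arho(\ov{\Delta})+\sum_j\pi^*\ov L_j$ projects onto $\Delta$, with fiber over $m\in\Delta^\circ$ identified with the Okounkov body on $\sB$ of $\sum_j\ov L_j+\ac(m)$, and the Boucksom--Chen concave transform on such a slice equals $\theta(m)$ plus the concave transform of $\sum_j\ov L_j+\ac(m)$. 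Applying the Boucksom--Chen formula for arithmetic volumes, integrating by Fubini over $\Delta$ and the fiber Okounkov body, and polarizing separately in the variables $\arho(\ov{\Delta})$ and $\pi^*\ov L_j$ yields the desired identity in this case.

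For arbitrary $\gamma$, apply the projection formula to rewrite
\begin{equation*}
\arho(\ov{\Delta})^{t+i}\pi^*\gamma \;=\; \pi_*\bigl(\arho(\ov{\Delta})^{t+i}\bigr)\gamma,
\end{equation*}
reducing the theorem to the identity of operational arithmetic classes
\begin{equation*}
\pi_*\bigl(\arho(\ov{\Delta})^{t+i}\bigr) \;=\; \tfrac{(t+i)!}{i!}\int_\Delta\bigl(\ac(m)+\theta(m)[\infty]\bigr)^i dm \in \ArChow^i(\sB)_\bbR.
\end{equation*}
An operational class is determined by its action after every morphism $f:Z\to\sB$ on every cycle of $Z$; because $\pi$ is compatible with flat base change and the constructions $\arho$, $\ac$, $\theta$ pull back, the previous paragraph identifies the two sides after base change to any sufficiently nice $Z$. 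The main obstacle is precisely that $\sB$ is only flat projective, not regular, so $A^*(\sB)$ is not \emph{a priori} generated by products of adelic divisor classes pulled back from $\sB$ itself; the fix I envision is to invoke smooth alterations $Z\to\sB$ where operational arithmetic Chow classes are generated by Chern classes of line bundles, together with the functoriality of the toric-bundle and Boucksom--Chen constructions under such pullbacks, in order to propagate the complete-intersection identity to the operational equality on $\sB$.
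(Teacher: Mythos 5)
Your first paragraph (well-definedness of the integral) and your handling of the complete-intersection case via Okounkov bodies and the Boucksom--Chen transform are sound, and indeed the paper explicitly notes that the Okounkov-body computations of Section~\ref{sec:okounkov} yield the theorem when $\gamma = \ov L_1\cdots\ov L_{g+1-i}$. The problem is the second stage.

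Your proposed reduction from general $\gamma$ to the complete-intersection case has a genuine gap. You write that after passing to a smooth alteration $Z\to\sB$, ``operational arithmetic Chow classes are generated by Chern classes of line bundles''; this is false, already in the geometric situation. Even for a smooth projective variety $Z$ the Chow groups are not generated by products of divisor classes (e.g.\ by Mumford's theorem $CH_0$ of a K3 surface is infinite-dimensional, whereas the image of $\Pic(Z)\otimes\Pic(Z)$ is not). The same failure persists for arithmetic Chow groups on regular arithmetic schemes, and operational classes may arise as Chern characters of higher-rank hermitian bundles with no presentation in terms of line bundles. The projection-formula rewriting $\arho(\ov{\Delta})^{t+i}\pi^*\gamma = \pi_*(\arho(\ov{\Delta})^{t+i})\gamma$ is legitimate because $\pi$ is generically smooth, but it does not help unless you can actually propagate the identity of operational classes, and your mechanism for doing so breaks down at exactly this point. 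This is precisely why the paper says the Okounkov-body approach handles complete-intersection cycles but that ``to prove it for arbitrary cycles requires other methods.''

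The paper's route is different in kind: it proves that both $\aI_\gamma$ and $\aF_\gamma$ are \emph{homogeneous polynomial functions of degree $t+i$} on the space of virtual adelic polytopes. For $\aI_\gamma$ this is the content of the arithmetic convex-chains machinery (extending Pukhlikov--Khovanskii's theorem on polynomial measures to $I$-metrized polytopes, Theorem~\ref{thm:arithmetic_polynomial_extension} and Corollary~\ref{cor:aIispolynomial}), and it works for an arbitrary coefficient class $\gamma$ because polynomiality is a purely combinatorial statement about the integrand. The equality of the two polynomials is then established by matching $(t+1)$-st partial derivatives along the distinguished basis of an adelic fan: the squarefree derivatives reduce to an empty-intersection vanishing (Proposition~\ref{empty_intersection}) plus an explicit projection-formula computation that works cycle-by-cycle, and the non-squarefree ones are handled by induction on multiplicity and on $i$ via Lie-derivative identities. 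That argument never needs $\gamma$ to decompose into line-bundle classes; it only needs the vanishing and restriction lemmas, which hold against any arithmetic cycle. If you want to repair your proof, the step to replace is the alteration argument: you would need to show $\aI_\gamma$ is a polynomial in $\ov{\Delta}$ for general $\gamma$, and the convex-chains theorem is precisely the tool for that.
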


We remark that the formula can be polarized, see \cite[Equation 7.8]{greenberg_forms_in_many_variables}, to obtain a description of the intersection number $\arho(D_1)\dots\arho(D_{t+i}) \pi^*\gamma$ for integrable toric Cartier divisors $D_1,\dots, D_{t+i}$. By approximation, one can replace $\gamma$ by a product $\gamma'\ac_1(L_1)\dots\ac_1(L_l)$ for integrable line bundles $L_1, \dots, L_l$ on $\B$ and $\gamma \in A^{g+1-i-l}(\sB)$.

\begin{remark}
    If $\sB$ is regular there is a map $\ArChow^*(\sB) \to A^*(\sB)$. For general $\sB$, characteristic classes of hermitian vector bundles will give rise to elements in $A^*(\sB)$. In Section \ref{subsec:chow_homology}, we discuss an alternative to $A^*(\sB)$ that is homological in nature given by $b$-cycles satisfying positivity conditions.
\end{remark}

We would like to note that the bundle BKK theorem of \cite{toricbundles} is used to compute the cohomology ring of smooth toric bundles. This relies on Poincar\'e duality for oriented manifolds and Leray's theorem on the cohomology of fibrations. It would be interesting to find a similar description for a suitable subring of the arithmetic Chow group of a toric bundle or for some equivariant arithmetic Chow group.

Let $G$ be a semiabelian variety over a global field $\K$ with abelian quotient $A$, given as a torus bundle by a map $c:M \to A^\vee(\Kbar)$. Let $\sM$ denote an ample symmetric line bundle on $A$. Endowing it with the canonical metrics gives it the structure of an adelically metrized line bundle $\ov{\sM}$. Let $\widehat{h}$ denote the N\'eron-Tate height on $A(\Kbar)\otimes \bbR$. We note that it factors through the polarization $A(\Kbar)\otimes \bbR \to A^\vee(\Kbar)\otimes \bbR$. We denote its evaluation on $A^\vee(\Kbar)\otimes \bbR$ by $\widehat{h}$ as well. Let $\ov{G}$ be the compactification of $G$ with respect to a fan in $M_\bbR$. Let $D$ be an ample toric Cartier divisor on $\toricvar_\Sigma$ with Newton polytope $\Delta \subset M_\bbR$. Let $\sF(\Delta)^{i}$ denote the set of $i$-dimensional faces of $\Delta$.

\begin{restatable}{rthm}{heightsemiabelian}\label{thm:height_formula_semiabelian}
    The height of a compactified semiabelian variety $\ov{G}$ can be computed as
    \[
        h_{\rho(D)^{\can}\otimes \pi^*\bar{\sM}}(\ov{G}) = -(d+1)!\int_{\Delta} \widehat{h}(c(m)) dm.
    \]
\end{restatable}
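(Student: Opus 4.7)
The plan is to set $\bar{\sL} \defeq \rho(D)^{\can} \otimes \pi^*\bar{\sM}$, whose total space $\ov{G}$ has dimension $d = g+t$, and to evaluate the top arithmetic self-intersection $\widehat{\deg}(\bar{\sL}^{d+1})$. Using the identification $\rho(D)^{\can} = \arho(\ov{D}^{\can})$ recalled in the introduction, the metric on $\ov{D}^{\can}$ is the toric canonical metric, whose local (and hence global) roof functions vanish identically; that is, $\theta \equiv 0$. Expanding $\bar{\sL}^{d+1}$ as a polynomial in $\arho(\ov{D}^{\can})$ and $\pi^*\bar{\sM}$ and applying the arithmetic bundle BKK formula of Theorem~\ref{thm:intersection_formula} with $\gamma = \arc_1(\bar{\sM})^{g+1-i}$ gives
\begin{equation*}
\widehat{\deg}(\bar{\sL}^{d+1}) \;=\; \sum_{i=0}^{g+1} \binom{d+1}{t+i}\,\frac{(t+i)!}{i!} \int_{\Delta} \widehat{\deg}\bigl(\ac(m)^{i}\cdot \bar{\sM}^{g+1-i}\bigr)\, dm,
\end{equation*}
where the intersections on the right are arithmetic intersections on the base $A$.

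The heart of the argument is to show that only the summand with $i=2$ is nonzero. The $i=0$ term vanishes because the canonical height $\widehat{\deg}(\bar{\sM}^{g+1})$ of $A$ is zero. For $i \geq 1$, I would exploit that $\ac(m)\in\aPic(A)$ represents an algebraically trivial line bundle with canonical metric, so that $[n]_A^*\ac(m) = \ac(m)^{\otimes n}$ as isometries, while symmetry of $\bar{\sM}$ gives $[n]_A^*\bar{\sM} = \bar{\sM}^{\otimes n^2}$. Since $\deg[n]_A = n^{2g}$, applying $[n]_A^*$ to the intersection number yields
\begin{equation*}
n^{2g}\,\widehat{\deg}\bigl(\ac(m)^{i}\cdot \bar{\sM}^{g+1-i}\bigr) \;=\; n^{2g+2-i}\,\widehat{\deg}\bigl(\ac(m)^{i}\cdot \bar{\sM}^{g+1-i}\bigr)
\end{equation*}
for all $n\geq 1$, which forces the intersection to vanish unless $i=2$.

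Collecting the surviving term leaves
\begin{equation*}
\widehat{\deg}(\bar{\sL}^{d+1}) \;=\; \frac{(d+1)!}{2(g-1)!} \int_{\Delta} \widehat{\deg}\bigl(\ac(m)^{2}\cdot \bar{\sM}^{g-1}\bigr)\, dm,
\end{equation*}
so it remains to identify the integrand with $-2(g-1)!\,\widehat{h}(c(m))$. This is the content of the arithmetic Hodge index / Faltings--Hriljac formula, extended by K\"unnemann to higher-dimensional abelian varieties, which expresses the arithmetic self-intersection of a canonically metrized algebraically trivial line bundle against $g-1$ copies of $\bar{\sM}$ in terms of the N\'eron--Tate height of the corresponding point of $A^\vee$ via the Poincar\'e bundle. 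The main obstacle is the bookkeeping in this last step: one must reconcile the normalization of $\widehat{h}$ on $A^\vee(\Kbar)\otimes \bbR$ (defined through the polarization $\phi_{\bar{\sM}}$) with the one produced by the Hriljac--K\"unnemann identity, so that the factor $-2(g-1)!$ emerges with exactly the correct sign. Once substituted, this recovers the claimed $-(d+1)!\int_{\Delta}\widehat{h}(c(m))\, dm$.
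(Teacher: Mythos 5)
Your proof takes a genuinely different route from the paper's, and is correct in outline. The paper proves this by computing the Okounkov body and the Boucksom--Chen transform (Lemma~\ref{semiabelian_okounkov_body}: the body is $\Delta\times\Delta_{A_\bullet}(\sM)$, the transform is $-\widehat h(c(m))$, constant in the $A$-direction) and then invoking the arithmetic Hilbert--Samuel theorem from Section~\ref{prelim_okounkov}. You instead plug $\gamma=\arc_1(\bar\sM)^{g+1-i}$ into the arithmetic bundle BKK formula of Theorem~\ref{thm:intersection_formula}, use the $[n]_A$-homogeneity argument ($[n]_A^*\ac(m)=n\,\ac(m)$, $[n]_A^*\bar\sM=n^2\bar\sM$, $\deg[n]_A=n^{2g}$) to kill all terms except $i=2$, and then identify $\adeg(\ac(m)^2\cdot\bar\sM^{g-1})$ with a N\'eron--Tate height by the Faltings--Hriljac/K\"unnemann Hodge index formula. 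This is a clean alternative that illustrates the power of Theorem~\ref{thm:intersection_formula}, and the homogeneity step is an elegant substitute for the paper's observation that the Boucksom--Chen transform is constant along the $A$-fibres.

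Two remarks. First, to license the choice $\gamma=\arc_1(\bar\sM)^{g+1-i}$ you are implicitly invoking the remark following Theorem~\ref{thm:intersection_formula}, which extends the BKK formula to $\gamma$ a product of first Chern classes of integrable adelic line bundles; that remark also notes that precisely this case of the BKK theorem is proved via the Okounkov-body machinery of Section~\ref{sec:okounkov}, so the two routes are less independent than they appear, though yours is a valid and more direct packaging. Second, the closing bookkeeping is indeed the delicate point, and I'd suggest not going through K\"unnemann's formula directly. The paper's own Lemma~\ref{semiabelian_okounkov_body} already pins down the normalization of $\widehat h$ by the identity $\adeg((\bar\sM+\ac(m))^{g+1})=-(g+1)\deg(\sM^g)\,\widehat h(c(m))$, and your homogeneity argument applied to that binomial expansion gives $\adeg((\bar\sM+\ac(m))^{g+1})=\binom{g+1}{2}\adeg(\ac(m)^2\cdot\bar\sM^{g-1})$; combining the two is the cleanest way to close the loop and guarantees you inherit exactly the paper's normalization rather than having to reconcile an external one. (Both your route and the paper's then produce an additional factor of $\vol(\sM)=\deg(\sM^g)/g!$; this is absorbed by the paper's choice of normalization of $\widehat h$, so it is not a defect specific to your argument.)
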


\begin{restatable}{rthm}{minimasemiabelian}\label{thm:successive_minima_formula_semiabelian}
    The $i$-th successive minimum of $\bar{G}$ with respect to $\rho(D)^{\can}\otimes \pi^*\bar{\sM}$ satisfies $\zeta_i(\bar{G}) = \zeta_{1}(\bar{G})$ for $i\leq g+1$. For $i \geq g+1$,
    \[
    \zeta_{i}(\bar{G}) = -\max_{F \in \sF(\Delta)^{t+g+1-i}} \min_{m \in F} \widehat{h}(c(m)).
    \]
\end{restatable}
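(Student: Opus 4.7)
The plan is to combine the height filtration for adelic toric bundles---an extension of Theorem~\ref{thm:minima} to all Zhang minima, developed in Section~\ref{sec:suc_min}---with the classical theory of canonical heights on abelian varieties. Applied to $\pi\colon \bar{G}\to A$ with the adelic line bundle $\rho(D)^{\can}\otimes \pi^*\bar{\sM}$, and using the isometry $\rho(D)^{\can}\cong \arho(\bar{D}^{\can})$ noted in the introduction together with the vanishing of all local roof functions for the canonical metric (so $\theta=0$), the filtration formula will express each $\zeta_i(\bar{G})$ as a combinatorial extremum, over faces $F\subset \Delta$ of the appropriate dimension, of Zhang minima of $\bar{\sM}\otimes c(m)$ on $A$ as $m$ varies over $F$.

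Next, I would identify all the Zhang minima on the abelian base. Since both $\bar{\sM}$ and $c(m)$ carry canonical (cubist) metrics, so does $\bar{\sM}\otimes c(m)$ for every $m\in M_\bbR$. Zhang's theorem on successive minima for abelian varieties with canonical metrics then forces all $g+1$ Zhang minima $\zeta_1(\bar{\sM}\otimes c(m)),\dots,\zeta_{g+1}(\bar{\sM}\otimes c(m))$ to coincide. To evaluate this common value, I would decompose the canonical height as $h_{\bar{\sM}\otimes c(m)}(x) = \widehat{h}_{\bar{\sM}}(x) + \widehat{h}_{c(m)}(x)$, namely a positive semidefinite quadratic plus a N\'eron--Tate linear form, and complete the square by translating with a lift $y\in A(\Kbar)\otimes\bbR$ of $c(m)$ along the polarization $\phi_{\bar{\sM}}\colon A\to A^\vee$. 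Comparing $t_y^*\bar{\sM}$ with $\bar{\sM}\otimes c(m)$ as canonically metrized $\bbR$-line bundles yields a constant-translation identity $h_{\bar{\sM}\otimes c(m)}(x) = \widehat{h}_{\bar{\sM}}(x+y) - \widehat{h}(c(m))$, from which $\zeta_\abs(\bar{\sM}\otimes c(m)) = -\widehat{h}(c(m))$ in the normalization of the theorem. This is the same computation underlying Theorem~\ref{thm:height_formula_semiabelian}.

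Substituting this single (Zhang-index-independent) value into the filtration formula from the first step collapses the nested extremum to the purely combinatorial expression in $\widehat{h}(c(m))$ over the faces of $\Delta$, producing the stated formula for $i\geq g+1$. The equality $\zeta_i(\bar{G})=\zeta_1(\bar{G})$ for $i\leq g+1$ then reflects the $g$-dimensional abelian base: the first $g+1$ successive minima of $\bar{G}$ are ``used up'' by a generic abelian fibre contribution, on which Zhang's theorem again forces all successive minima to agree. The main obstacle is the first step, namely establishing the full Zhang-minima filtration for adelic toric bundles in Section~\ref{sec:suc_min}; this extends the essential and absolute minimum computation of Theorem~\ref{thm:minima} to intermediate minima through a face-by-face stratification of the Boucksom--Chen transform. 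The abelian-variety arithmetic and the combinatorial substitution in the last two steps are then routine.
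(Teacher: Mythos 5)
Your overall roadmap matches the paper's, and the abelian-variety computation in your second step is correct: for a numerically trivial bundle $c(m) = \phi(y)$ with canonical (cubist) metric one indeed has $h_{\bar{\sM}\otimes c(m)}(x) = \widehat{h}_{\bar{\sM}}(x+y) - \widehat{h}(c(m))$, so all Zhang minima of $\bar{\sM}\otimes c(m)$ coincide and equal $-\widehat{h}(c(m))$. This is consistent with Lemma~\ref{semiabelian_okounkov_body}, where the Boucksom--Chen transform over $m$ is shown to be the constant $-\widehat{h}(c(m))$.

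However, you yourself flag the first step as the ``main obstacle,'' and this is a genuine gap, not a routine extension. The paper does \emph{not} establish a general formula for intermediate Zhang minima of arbitrary adelic toric bundles --- that would be considerably harder and would require controlling the intermediate Zhang minima of $\ov{L}+\ac(m)$ on the base, about which nothing general can be said. Instead the paper exploits two features specific to the semiabelian case. First, because heights with respect to $\rho(D)^{\can}\otimes\pi^*\bar{\sM}$ are invariant under translation by torsion points of $A$, the base filtrations $\B_{\sigma,\lambda}$ of Section~\ref{sec:suc_min} are each either empty or all of $A$; consequently the first nonempty step of the height filtration already surjects onto $A$, which immediately gives $\zeta_1 = \dots = \zeta_{g+1}$. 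Second, for the remaining minima one is reduced to computing the \emph{essential} minimum of the restriction of $\pi^*\ov{\sM}+\rho(D)^{\can}$ to each closed toric subbundle $\calV(\sigma)$. The key lemma making this work is that, for $m_\sigma\in F_\sigma$, the restriction of $\rho(D)$ to $\calV(\sigma)$ equals $\rho(F_\sigma - m_\sigma)+\pi^*c(m_\sigma)$, so the Okounkov body and Boucksom--Chen transform of the restriction are (up to translation) the restrictions to the preimage of the face $F_\sigma$, and Theorem~\ref{thm:ballay} applies to each such restriction. This face/cone duality plus restriction argument is precisely what you gesture toward with ``face-by-face stratification of the Boucksom--Chen transform,'' but it needs to be spelled out; in particular you should avoid proposing a general intermediate Zhang-minima formula for toric bundles and instead reduce to essential minima on orbit closures.

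One small slip to correct: you attribute the equality $\zeta_i(\bar{G})=\zeta_1(\bar{G})$ for $i\le g+1$ to a ``generic abelian \emph{fibre} contribution,'' but $A$ is the \emph{base}, not a fibre, of $\pi\colon\bar{G}\to A$. The correct reason is that the first nonempty height-filtration step contains a closed orbit isomorphic to $A$, hence already has dimension $g$.
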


By specializing $\Delta$ in the above theorems, one can recover the results of Section 4 in \cite{cl2}.

\subsection{Organization of the article}

Section \ref{sec:prelim} is devoted to establishing notations and recalling known facts on toric varieties and Okounkov bodies in algebraic and arithmetic geometry. We further explain how to associate intersection numbers to an arbitrary arithmetic cycle and a collection of integrable divisors.

In Section \ref{sec:toric_bundles} we introduce metrized toric bundles and construct the map $\arho$. We proceed to study their basic properties.

In Section \ref{sec:okounkov}, we compute Okounkov bodies and the Boucksom-Chen transform for line bundles on toric bundles. We apply this to prove Theorem \ref{thm:minima}.

We prove the arithmetic bundle BKK theorem in Section \ref{hkm} by means of arithmetic convex chains.

Section \ref{sec:examples} illustrates the results from the previous sections with examples in the realm of semiabelian varieties. In particular, we prove Theorem \ref{thm:height_formula_semiabelian} and \ref{thm:successive_minima_formula_semiabelian}. We then apply this to recover computations of Chambert-Loir.

\subsection*{Acknowledgements}
I thank my advisor Lars K\"uhne for his guidance, discussions and comments on earlier drafts. I thank Jos\'e Burgos Gil and Klaus K\"unnemann for discussions and comments. In particular, for Klaus K\"unnemann's suggestion to consider non-regular base schemes in Theorem \ref{thm:intersection_formula}. I thank Bror Hultberg for discussions on convex analysis and Fabien Pazuki for comments on an early draft. I lastly thank Wenbin Luo and Martin Sombra for short, but helpful conversations.

\section{\label{sec:prelim}Preliminaries}

\subsection{\label{subsec:toric_varieties_prelim}Arakelov geometry of toric varieties}
We will assume basic familiarity with toric varieties, but still give a brief recollection on some basic facts. We then present the extension of these facts in arithmetic geometry as developed in \cite{toricheights}, which we recommend for an in-depth treatment of the Arakelov geometry of toric varieties. In addition, we introduce the original notion of an adelic polytope. This turns out to be convenient to transfer arguments from the classical theory.

Let $\T$ be a split torus over a field $K$.
\begin{definition}
    A \emph{toric variety} with torus $\T$ is a normal variety $X$ with a dense open embedding $\T\subseteq X$ and an action $\T\times X \to X$ extending multiplication on $\T$.
\end{definition}

We follow the convention to denote by $N$ the set of cocharacters $\Hom(\bbG_m,\T)$ and by $M$ the set of characters $\Hom(\T,\bbG_m)$. They are finitely generated free abelian groups, dual to one another. Toric varieties with torus $\T$ are in bijection to rational fans on $N_\bbR$. We denote by $X_\Sigma$ the toric variety associated to a fan $\Sigma$.

\begin{definition}
    A \emph{virtual support function} or \emph{virtual polytope} with respect to a fan $\Sigma$ on $N_{\bbR}$ is a function $|\Sigma| \to \bbR$ that is linear on each cone in $\Sigma$. The set of virtual support functions with respect to $\Sigma$ is denoted by $\calP_\Sigma$.
\end{definition}

Virtual polytopes are generalizations of polytopes by Legendre-Fenchel duality. Consider the monoid of polytopes $\calP^+_\Sigma$ in $M_{\bbR}$ whose normal fan coarsens $\Sigma$ with addition given by Minkowski sum. The \emph{normal fan} of a polytope $\Delta$ is defined by associating to each $i$-dimensional face $F$ the $(n-i)$-dimensional cone
\[
\left\{ n \in N_{\bbR}\left| \forall u \in F:\ \langle n,u\rangle = \sup_{x \in \Delta} \langle n,x\rangle \right.\right\}.
\]
The normal fan is the collection of these cones.

The group completion of $\calP^+_\Sigma$ can be identified with $\calP_\Sigma$ through Legendre-Fenchel duality. Under this duality polytopes are identified with concave functions.

\begin{definition}
    We denote by $\calP$ and $\calP^+$ the set of (virtual) polytopes with respect to any rational fan.
\end{definition}

\begin{definition}
    A \emph{toric Cartier divisor} on a toric variety $X$ is defined to be a Cartier divisor which is invariant under the action of the torus $\mu:\bbT\times X \to X$. This means that a Cartier divisor $D$ is toric if $\mu^*D = \pi^*D$, where $\pi$ denotes the projection map. Denote the set of toric Cartier divisors on a toric variety $X$ by $\Div_\bbT(X)$. A toric $\bbR$-Cartier divisor is an element of $\Div_\bbT(X)_\bbR = \Div_\bbT(X) \otimes \bbR$.
\end{definition}

\begin{theorem}[Section 3.3 \cite{toric}]
    There is an isomorphism $\rho:\calP_\Sigma \to \Div_\bbT(X_{\Sigma})_\bbR$.
\end{theorem}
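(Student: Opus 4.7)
The plan is to construct the map $\rho$ chart-by-chart using the affine cover of $X_\Sigma$, establish the integer-coefficient version of the statement first, and then extend by scalars to $\bbR$. The core observation is that on an affine toric chart, a torus-invariant Cartier divisor is essentially unique up to a character, which sets up a natural correspondence between piecewise-linear data on $\Sigma$ and divisorial data on $X_\Sigma$.

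More precisely, I would use the affine open cover $X_\Sigma = \bigcup_{\sigma \in \Sigma} U_\sigma$ with $U_\sigma = \Spec K[\sigma^\vee \cap M]$. On each chart, a torus-invariant Cartier divisor must, up to a torus-invariant unit, be locally defined by a character $\chi^{m_\sigma}$. Since the torus-eigenvector units in $K[\sigma^\vee \cap M]^\times$ are parametrized by $M(\sigma) \defeq \sigma^\perp \cap M$, the class $[m_\sigma] \in M/M(\sigma)$ is well-defined. A collection $(m_\sigma)_{\sigma \in \Sigma}$ glues to a global toric Cartier divisor iff for each pair $\sigma, \tau$, one has $m_\sigma \equiv m_\tau \pmod{M(\sigma \cap \tau)}$ on $U_{\sigma\cap\tau} = U_\sigma \cap U_\tau$. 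This is precisely the compatibility ensuring that $\varphi(n) = \langle n, m_\sigma \rangle$ for $n \in \sigma$ defines a continuous function on $|\Sigma|$ that is linear on each cone, namely an element of the integral analogue $\calP_\Sigma^\bbZ$. Conversely, any $\varphi \in \calP_\Sigma^\bbZ$ determines such local vectors $m_\sigma$, and the two constructions are manifestly inverse and additive.

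To pass to $\bbR$-coefficients, I would use that $\Div_\bbT(X_\Sigma)_\bbR$ is by definition $\Div_\bbT(X_\Sigma) \otimes_\bbZ \bbR$, while $\calP_\Sigma$ consists of piecewise linear $\bbR$-valued functions on $|\Sigma|$, which are finitely determined by the vectors $m_\sigma \in M_\bbR$ for maximal $\sigma$, and hence $\calP_\Sigma \cong \calP_\Sigma^\bbZ \otimes_\bbZ \bbR$ as $\bbR$-vector spaces. The integral isomorphism tensored with $\bbR$ then yields the claimed $\bbR$-linear isomorphism $\rho$.

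The main technical hurdle is the local analysis of torus-invariant Cartier divisors on $U_\sigma$, in particular establishing that they become principal when restricted to each affine chart and moreover that their generators can be chosen to be characters. This rests on the fact that $K[\sigma^\vee \cap M]$ is a normal affine semigroup algebra whose torus-eigenfunctions are in bijection with $\sigma^\vee \cap M$, so the eigenvector units among them are exactly the characters in $\sigma^\perp \cap M = M(\sigma)$. Once this local classification is in hand, the gluing condition and the linearity-on-cones condition defining $\calP_\Sigma$ match tautologically, and the isomorphism follows.
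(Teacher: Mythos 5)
Your proof is correct and follows the standard textbook argument (the paper itself gives no proof but cites the reference, and your chart-by-chart analysis is exactly the approach found there): use the affine cover $U_\sigma = \Spec K[\sigma^\vee\cap M]$, show that a $\T$-invariant Cartier divisor on $U_\sigma$ is $\div(\chi^{m_\sigma})$ for some $m_\sigma \in M$ unique modulo $M(\sigma)=\sigma^\perp\cap M$, translate the Cartier gluing condition into the compatibility of the $m_\sigma$ and hence into a piecewise-linear function on $|\Sigma|$, and finally tensor with $\bbR$. The only place you could add detail is the ``main technical hurdle'' you already flag: that a $\T$-invariant Cartier divisor on $U_\sigma$ has a local generator at the distinguished point $\gamma_\sigma$ that can be taken to be a $\T$-eigenvector (hence a character), which follows from the $\T$-eigendecomposition of $\sO_{U_\sigma,\gamma_\sigma}$ together with the fact that $\m_{\gamma_\sigma}$ is $\T$-stable; and that passing to $\bbR$ works because the compatibility conditions are rational-linear, so the integral support functions span $\calP_\Sigma$ over $\bbR$.
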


Let $X$ be a finite type scheme over a field $K$ endowed with an absolute value. Let $\widehat{K}$ denote its completion. Then, we define $X^{\an}$ to be the analytification of $X_{\widehat{K}}$ in the sense of Berkovich, introduced in \cite{berk}. Suppose $X_{\widehat{K}}=\Spec(A)$ is affine. Then, $X^{\an}$ as a set can be identified with
\[
    \{|\blank|:A\to \bbR \text{ multiplicative seminorm extending the norm on } \widehat{K}\}.
\]

For a split torus $\T$ we can define the tropicalization map by
\begin{align*}
    \bbT^{\an} &\to N_{\bbR}\\
    x&\mapsto (m \mapsto |\chi^m(x)|).
\end{align*}
We will not need to introduce tropicalization in the general setting.

\begin{definition}
    Let $D$ be a Cartier divisor on a variety $X$ over a field $K$ endowed with an absolute value. Then, a continuous Green's function for $D$ is a function $g:(X\setminus \Supp D)^{\an} \to \bbR$ such that for each $U \subseteq X$ on which $D$ is defined by a section $f$, the function $g+\log|f(x)|:(U\setminus \Supp D)^{\an} \to \bbR$ extends to a continuous function on $U^{\an}$.
\end{definition}

\begin{definition}
    Let $K$ be a field with an absolute value. A \emph{metrized Cartier divisor} $\bar{D}=(D,g)$ on a proper variety $X$ consists of a Cartier divisor $D$ and a continuous $D$-Green's functions $g$.
    
    If $K$ is a global field, an \emph{adelic Cartier divisor} $\bar{D}=(D,g_v)$ consists of a Cartier divisor and continuous Green's functions $g_v$ for all places $v \in M_K$ such that there exists a dense open subset $U \subseteq \sS$ and a normal proper model $(\sX_U,D_U)$ over $U$ such that for all $v \in U$, $g_v$ is induced by the model. An adelic Cartier divisor is called effective if the underlying Cartier divisor is effective and $g_v \geq 0$ for all places $v$.

    These notions have an analogue on the level of line bundles, namely metrized line bundles and adelically metrized line bundles. We denote the set of adelic Cartier divisors by $\aDiv(X)$.
\end{definition}

For the purposes of arithmetic intersection this is too general. We restrict to \emph{integrable} metrics at each place. An integrable metric is the difference of semipositive metrics. \emph{Semipositive} metrics are limits of model/smooth metrics satisfying certain positivity conditions.

Let $X$ be a proper variety of dimension $d$. Then, there is an intersection pairing defined in \cite{zhang_small_points_adelic_metrics} that to $d+1$ integrable divisors $\ov{D}_0,\dots,\ov{D}_{d}$ associates an intersection number $\ov{D}_0\dots\ov{D}_{d}\in \bbR$. This pairing factors through the group of integrable adelic $\bbR$-Cartier divisors, see \cite[Section 3.2]{ballay_nakai_moishezon_R-Cartier}. The group of adelic $\bbR$-Cartier divisors $\aDiv(X)_\bbR$ consists of an $\bbR$-Cartier divisor and compatible Green's functions at every place. It admits a natural surjection $\aDiv(X)\otimes\bbR \to \aDiv(X)_\bbR$. Given an adelic $\bbR$-Cartier divisor $\ov{D}$ we denote its top intersection product $\ov{D}^{d+1}$ by $h_{\ov{D}}(X)$ and call it the height of $X$ with respect to $\ov{D}$. This convention differs from \cite{cl2}, where the height is normalized by a factor $\frac{1}{(\dim X + 1)\deg_D(X)}$, but agrees with the convention of \cite{F_Ballay_Succesive_minima}.

When $X$ is a toric variety we call a Green's function $g$ toric if it factors through tropicalization on the underlying torus, i.e.\ is invariant under the action of the unit torus. We will denote the set of toric adelic divisors by $\aDiv_\bbT(X)$. The toric dictionary extends to this setting. The continuous metrics on $\Psi \in \calP_\Sigma$ are in bijection to continuous functions $\psi$ on $N_\bbR$ such that $\psi - \Psi$ is bounded. Under this bijection, semipositive metrics correspond to concave functions $\psi$, see \cite[Theorem 4.8.1]{toricheights}.

It will be useful to relate integrable divisors directly to polytopes.

\begin{definition}\label{defi:I_metrized_polytope}
    Let $J$ be a finite set and $V$ a finite-dimensional vector space. A $J$-metrized polytope is a polytope $\ov{\Delta}\subset V\oplus \bigoplus_{j\in J}\bbR$ that can be obtained via the following construction. 
    
    Let $(\Delta,(\theta_j)_{j \in J})$ consisting of a polytope $\Delta$ in $V$ and for each $j \in J$, a concave function $\theta_j:\Delta \to \bbR_{\geq 0}$. The associated polytope is
    \[
        \ov{\Delta} = \{(x,t_j)\in V\oplus \bigoplus_{j\in J}\bbR\mid  x \in \Delta, 0\leq t_j \leq \theta_j(x)\}.
    \]
    Denote the set $J$-metrized polytopes by $\calP^{J,+}$.
    
    For an infinite set $I$, we define the set of $I$-metrized polytopes to be the filtered colimit
    \[
        \underset{J\subset I \text{ finite}}{\colim} \calP^{J,+},
    \]
    where the transition map for $J \subset J'$ is given by $\ov{\Delta} \mapsto \ov{\Delta} \times \prod_{j\in J'\setminus J} 0$. This is compatible with the monoid structure, hence giving a monoid of $I$-metrized polytopes.

    We denote the group completion of $\calP^{I,+}$ by $\IP$ and call it the group of $I$-metrized virtual polytopes. When the set $I$ is the set of places $M_K$ of a global field $K$ we will use the notation $\aP^+$ and $\aP$. Their elements will be referred to as (virtual) adelic polytopes.

    For $I=\{*\}$, we speak simply of metrized polytopes.
\end{definition}

\begin{definition}\label{defi:roof_functions_polytopes}
    An $I$-metrized polytope $\ov{\Delta}$ has an associated underlying polytope $\Delta \subset V$ and for each $i\in I$ a \emph{local roof function} $\theta_i:\Delta \to \bbR$ satisfying $\theta_i =0$ for almost all $i$ such that $\ov{\Delta}$ is associated to $(\Delta,(\theta_j)_{i \in I})$ as described in Definition \ref{defi:I_metrized_polytope}. The \emph{global roof function} is defined as the finite sum $\theta = \sum_{i \in I}\weighti\theta_i$, where $\weighti$ is a choice of weights that is clear from the context. In particular, for adelic divisors $\weighti$ will be $1$ for all $i$ if $K$ is a function field and $\frac{[K_i:\bbQ_i]}{[K:\bbQ]}$ if $K$ is a global field.

    For each $i \in I$, we associate a local metrized polytope 
    \[
        \Delta_i = \{(x,t)\in V\oplus \bbR\mid  x \in \Delta, 0\leq t \leq \theta_i(x)\}.
    \]

    We define the global polytope $\widehat{\Delta}$ to be
    \[
        \{(x,t)\in V\oplus \bbR\mid  x \in \Delta, 0\leq t \leq \theta(x)\}.
    \]
\end{definition}

Let us recall \cite[Theorem 4.8.1]{toricheights} using this new language. There is an isomorphism of monoids between semipositive effective metrized divisors and metrized polytopes. This globalizes to an isomorphism of monoids between semipositive effective adelic divisors and adelic polytopes. By group completion, the isomorphisms extend to isomorphisms between virtual (adelically) metrized polytopes and integrable (adelically) metrized divisors.

Model metrics will play a crucial role in the proof of the arithmetic bundle BKK-theorem.

Let $K$ be a complete field with respect to an absolute value associated to a non-trivial discrete valuation. Let $\Sigma$ be a complete rational fan on $M_\bbR$. Then, the set of toric models can be identified with rational fans $\widetilde{\Sigma}$ in $N_\bbR \oplus \bbR_{\geq 0}$ whose intersection with $N_\bbR \oplus 0$ is $\Sigma$, cf. \cite[Theorem 3.5.3]{toricheights}. We refer to the fan $\Sigma^{\can}$ consisting of the cones of the form $\sigma \oplus 0$ and $\sigma \oplus \bbR_{\geq 0}$ for $\sigma \in \Sigma$ as the \emph{canonical (metrized) fan} associated to $\Sigma$. Denote the toric model associated to $\widetilde{\Sigma}$ by $\sX_{\widetilde{\Sigma}}$. The following follows from \cite[Theorem 3.6.7]{toricheights}.

\begin{theorem}
    The set of semipositive effective $\bbR$-divisors on $\sX_{\widetilde{\Sigma}}$ is in bijection to metrized polytopes whose normal fan restricted to $N_\bbR \oplus \bbR_{\geq 0}$ coarsens $\widetilde{\Sigma}$.
\end{theorem}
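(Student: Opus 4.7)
The plan is to combine two translations. First, invoke the correspondence between toric models $\sX_{\widetilde{\Sigma}}$ of $X_\Sigma$ and complete rational fans $\widetilde{\Sigma}$ on $N_\bbR \oplus \bbR_{\ge 0}$ restricting to $\Sigma$, together with its divisor-level version in \cite[Theorem 3.6.7]{toricheights}: semipositive effective $\bbR$-Cartier divisors on $\sX_{\widetilde{\Sigma}}$ correspond to continuous concave virtual support functions $\widetilde{\Psi}$ on $N_\bbR \oplus \bbR_{\ge 0}$ that are rational piecewise linear on $\widetilde{\Sigma}$. Second, apply Legendre--Fenchel duality to translate each such $\widetilde{\Psi}$ into its stability set
\[
\widehat{\Delta} = \{(x,t)\in M_\bbR\oplus \bbR \mid \langle (n,s),(x,t)\rangle \le \widetilde{\Psi}(n,s) \text{ for all } (n,s)\in N_\bbR\oplus \bbR_{\ge 0}\},
\]
and check that this polytope is exactly a metrized polytope in the sense of Definition \ref{defi:I_metrized_polytope}, and that its normal fan (restricted to the upper half space) coarsens $\widetilde{\Sigma}$ if and only if $\widetilde{\Psi}$ is piecewise linear on $\widetilde{\Sigma}$.

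The key steps are then: (i) show that restricting $\widetilde{\Psi}$ to the slice $N_\bbR\oplus 0$ yields a concave support function on $N_\bbR$ whose Legendre dual is a polytope $\Delta \subset M_\bbR$ coarsening $\Sigma$; this gives the underlying polytope. (ii) Show that the fibre of $\widehat{\Delta}$ over $x\in \Delta$ is the interval $[0,\theta(x)]$ where $\theta(x)=\inf_{n \in N_\bbR}\bigl(\widetilde{\Psi}(n,1)-\langle n,x\rangle\bigr)$; concavity of $\widetilde{\Psi}$ gives concavity of $\theta$, and effectivity of the divisor is equivalent to $\widetilde{\Psi}(0,s)\ge 0$ for $s\ge 0$, which forces $\theta \ge 0$ on $\Delta$; the analogous characterization of semipositive continuous toric divisors from \cite[\S4.8]{toricheights} shows that $\theta$ extends continuously to $\Delta$ and that the passage $\widetilde{\Psi}\leftrightarrow(\Delta,\theta)$ is a bijection. (iii) Identify the normal fan condition: a cone $\widetilde{\sigma}\in\widetilde{\Sigma}$ corresponds to a face of $\widehat{\Delta}$ on which $\widetilde{\Psi}$ is linear, so piecewise linearity of $\widetilde{\Psi}$ with respect to $\widetilde{\Sigma}$ is the same as the normal fan of $\widehat{\Delta}$ restricted to $N_\bbR\oplus \bbR_{\ge 0}$ refining the dual statement, i.e.\ coarsening $\widetilde{\Sigma}$. (iv) Conversely, any metrized polytope produces such a $\widetilde{\Psi}$ by taking the support function of $\widehat{\Delta}$, giving the inverse correspondence.

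The main obstacle is the careful bookkeeping in step (ii): one must verify that every concave function $\theta:\Delta \to \bbR_{\ge 0}$ arising from a semipositive effective model divisor is rational piecewise linear on a polyhedral decomposition dual to a coarsening of $\widetilde{\Sigma}$, and conversely that any such pair $(\Delta,\theta)$ produces a support function whose values on the canonical cone $N_\bbR\oplus 0$ are the ones compatible with the base divisor on $X_\Sigma$. Away from this point the argument is formal once the dictionary of \cite[Theorem 3.6.7]{toricheights} and standard Legendre--Fenchel duality are in place.
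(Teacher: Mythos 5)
Your overall strategy is the same as the paper's — use the dictionary \cite[Theorem 3.6.7]{toricheights} to go from semipositive effective $\bbR$-divisors on $\sX_{\widetilde{\Sigma}}$ to concave piecewise-linear functions, then use Legendre--Fenchel duality to produce polytopes. However, there is a concrete gap in step (ii): the set
\[
\widehat{\Delta} = \{(x,t)\in M_\bbR\oplus \bbR \mid \langle (n,s),(x,t)\rangle \le \widetilde{\Psi}(n,s) \text{ for all } (n,s)\in N_\bbR\oplus \bbR_{\ge 0}\}
\]
is \emph{not} a metrized polytope. Taking $s=0$ gives $x\in\Delta$ and taking $s=1$ gives $t\le\theta(x)$, but no constraint with $s\ge 0$ ever produces a \emph{lower} bound on $t$. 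The fibre of your $\widehat{\Delta}$ over $x\in\Delta$ is therefore the unbounded interval $(-\infty,\theta(x)]$, not $[0,\theta(x)]$ as you assert — so $\widehat{\Delta}$ is unbounded and not a polytope at all, and the identification with Definition \ref{defi:I_metrized_polytope} fails.

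The paper avoids this precisely by not stopping at $N_\bbR\oplus\bbR_{\geq 0}$: it extends the conical function $\widetilde{\psi}$ to all of $N_\bbR\oplus\bbR$, setting $\widetilde{\psi}(n,x)=\Psi(n)$ for $x<0$ where $\Psi$ is the recession function, and only then takes the Legendre--Fenchel dual. The additional constraints coming from $x<0$ (e.g.\ $x=-1$: $\langle n,m\rangle - t\le\Psi(n)$ for all $n$) exactly force $t\ge 0$ when $m\in\Delta$, recovering the metrized polytope $\{(m,t):m\in\Delta,\ 0\le t\le\theta(m)\}$. So your route can be repaired by adding that extension, but as written the duality step does not land in the claimed target set. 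Note also that effectivity in your write-up is correctly tied to $\theta\ge 0$ via $\widetilde{\Psi}(0,\cdot)\ge 0$, but that condition controls only the nonnegativity of the \emph{roof}, not the missing floor $t\ge 0$ of the polytope.
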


\begin{proof}
    Piecewise affine concave functions $\psi$ on $\widetilde{\Sigma} \cap (N_\bbR \times \{ 1\})$ are in bijection with semipositive $\bbR$-divisors on $\sX_{\widetilde{\Sigma}}$ by a combination of \cite[Theorem 3.6.7]{toricheights} and \cite[Theorem 3.7.3]{toricheights}. The divisor corresponding to $\psi$ is effective precisely if $\psi$ is nonnegative. Let $\Psi$ be the recession function of $\psi$. We may extend it to a conical function on $\widetilde{\psi}:N_\bbR \oplus \bbR$ by setting $\psi(n,x) = \Psi(n)$ for $x<0$. Legendre-Fenchel duality restricts to a correspondence between metrized polytopes whose normal fan coarsens $\widetilde{\Sigma}$ and conical functions of the form above.
\end{proof}

We can consider metrics on toric divisors compatible with $\widetilde{\Sigma}$ even if $K$ does not have a discrete valuation.

\begin{definition}
    A metrized divisor is compatible with $\widetilde{\Sigma}$ if it is a difference of two divisors associated to metrized polytopes whose normal fan restricted to $N_\bbR \oplus \bbR_{\geq 0}$ coarsens $\widetilde{\Sigma}$. We will denote the set of metrized polytopes compatible with $\widetilde{\Sigma}$ by $\calP_{\widetilde{\Sigma}}$. It can be identified with conical functions $\psi:N_\bbR \oplus \bbR \to \bbR$ that is linear on cones of $\widetilde{\Sigma}$ and such that $\psi(n,x) = \psi(n,0)$ for $x<0$. 
\end{definition}

\begin{definition}
    We define an adelic fan $\widetilde{\Sigma}$ to be a collection of fans $\widetilde{\Sigma}_v$ in $N_\bbR \oplus \bbR_{\geq 0}$ for each $v \in M_K$ such that almost all the fans are canonical. A (virtual) adelic polytope is said to be compatible with $\widetilde{\Sigma}$ if it is compatible with each $\widetilde{\Sigma}_v$ when considered as a metrized polytope. The set of such polytopes will be denoted $\aP_{\widetilde{\Sigma}}$ and $\aP^+_{\widetilde{\Sigma}}$.
    
    One can identify virtual adelic polytopes with conical functions $\psi:N_\bbR \oplus \bigoplus_{i \in I}\bbR \to \bbR$ that factors over $N_\bbR \oplus \bigoplus_{j \in J}\bbR$ for some finite $J \subset M_K$ satisfying that $\psi(n,(x_j)) = \psi(n,(\max\{0,x_j\}))$ and linear on each cone of $\Sigma$ and each cone of the form $\sum_j\in J \sigma_j$, where $\sigma_j \cap \N_\bbR = \sigma$ for some fixed cone $\sigma \in \Sigma$.

    The set of cones of $\aP_{\widetilde{\Sigma}}$ is defined to be the union of $\Sigma$ and the cones of $\aP_{{\widetilde{\Sigma}}_v}$ at each place $v$. Here the cones at $v$ contained in $N_\bbR \times 0$ are identified with $\Sigma$.
\end{definition}

\begin{definition}
    Let $v \in M_K$ be a place. Denote by $\aP^v$ the set of virtual adelic polytopes canonical at all places $w\neq v$, i.e.\ the ones that come from $\{v\}$-metrized polytopes in the colimit. Then, we say that $\ov{\Delta}\in\aP_{\widetilde{\Sigma}}^+$ is $v$-interior if its is in the interior of $\left(\ov{\Delta} + \aP^v\right) \cap \aP_{\widetilde{\Sigma}}^+ \subset \ov{\Delta} + \aP^v$.
\end{definition}

\subsection{\label{prelim_okounkov}Okounkov bodies and roof functions}
Let $X$ be a projective variety over a global field $K$ and let $\ov{D}$ be a geometrically big adelic Cartier divisor on $X$. The concave transform of $\ov{D}$ as defined in \cite[Definition 1.7]{Boucksom_Chen_Okounkov_bodies_of_filtered_linear_series} is a concave function on the Okounkov body $\Delta$ associated to the underlying line bundle encoding information on the adelically metrized line bundle in terms of convex geometry. It is defined in terms of the filtered linear series associated to $\ov{D}$.

Consider the graded linear series $V^\bullet = \bigoplus_{n=0}^\infty H^0(X,\sO(n\ov{D}))$. Let $x \in X(\Kbar)$ be a regular point and fix an isomorphism $\z:\widehat{\sO}_{X,x} \iso \Kbar[[x_1, \dots, x_d]]$. This induces a rank $d$ valuation $\nu_n$ on each $H^0(X,\sO(n\ov{D}))$ by taking the valuation on $\Kbar[[x_1, \dots, x_d]]$ sending $x^{a_1}_1\cdot\dots\cdot x^{a_d}_d$ to $(a_1,\dots,a_d)$ and on a linear combination of monomials the lexicographically smallest term. The Okounkov body of $D$ is defined to be as the closure of
\[
\bigcup_{n=0}^\infty \frac{1}{n} \nu(H^0(X,\sO(n\ov{D}))).
\]
This is a convex body and we denote it by $\Delta(D)$, where the choice of $\z$ is understood. It is invariant under numerical equivalence by \cite[Proposition 4.1]{lazmus}. Just as easily we can define Okounkov bodies for subalgebras.

Each term in the graded algebra is endowed with a filtration by minima. More precisely, on an adelically normed vector space $V$ we may define the $F^t V$ to be the sub-vector-space generated by vectors of height $\leq t$, see \cite[Definition 3.1]{Boucksom_Chen_Okounkov_bodies_of_filtered_linear_series}. We define $F^t V^\bullet = \bigoplus F^{nt} H^0(X,\sO(n\ov{D}))$. The associated Okounkov body will be denoted by $\Delta^t(\ov{D})$. Boucksom and Chen in \cite{Boucksom_Chen_Okounkov_bodies_of_filtered_linear_series} define the concave transform $G_{\sL,\z}: \Delta(D) \to \bbR$ by $x \mapsto \inf \{ t\mid x\in \Delta^t(\ov{D})\}$, which is concave and upper semicontinuous on the boundary. The hypograph of this function is called the arithmetic Okounkov body.

A common way to obtain an isomorphism $\widehat{\sO}_{X,x} \iso \Kbar[[x_1, \dots, x_d]]$ is by fixing a flag
\[
X_\bullet:X\supset X_1 \supset X_2 \supset\dots \supset X_d =\{x\},
\]
of irreducible subvarieties $X_i$ of codimension $i$ that are non-singular at $x$. If $\z$ is induced by $X_\bullet$, we denote the Boucksom-Chen transform by $G_{\ov{D},X_\bullet}$.

The concave transform defined above encodes important information on the adelically metrized line bundle. For instance, Balla\"y shows that the essential and the absolute infimum of an adelic line bundle are determined by its associated filtered linear series in the semipositive case. \cite[Theorem 1.7]{Arithmetic_Demailly_Qu_Yin} allows to remove one semipositivity assumption. Here is a version of \cite[Proposition 7.1]{F_Ballay_Succesive_minima} adapted accordingly together with \cite[Corollary 1.2]{ballay:hal-03606531}.

\begin{theorem}\label{thm:ballay}
    Let $\ov{D} \in \widehat{\Div}_{\bbR}$ with $D$ big. Then, 
\[\zeta_{\ess}(\ov{D}) = \max_{\alpha \in \Delta(D)}G_{\ov{D}}(\alpha).\]
If $\ov{D}$ is semipositive, \[\zeta_{\abs}(\ov{D}) = \inf_{\alpha \in \Delta(D)}G_{\ov{D}}(\alpha).\]
\end{theorem}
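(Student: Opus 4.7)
The plan is to apply Ballaÿ's theorem (Theorem \ref{thm:ballay}) which converts the problem of computing essential and absolute minima into the problem of computing the Okounkov body and the Boucksom-Chen concave transform of $\arho(\ov{\Delta})+\pi^*\ov{L}$. The computation of these two convex-geometric objects, done with respect to a flag on $\toricbun$ adapted to the bundle structure, will then yield both formulas in a nearly mechanical fashion.

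First, I would choose a flag $X_\bullet$ on $\toricbun$ compatible with the fibration $\pi:\toricbun\to\B$. Pick a flag $Y_\bullet$ on $\B$ passing through a regular point $b$ and pull it back to obtain the first $g$ members $X_i=\pi^{-1}(Y_i)$; then $X_g$ is the fibre over $b$, which is isomorphic (after trivialising $\Tbun$ near $b$) to $\toricvar_\Sigma$. Continue the flag inside this fibre with a standard toric flag terminating at a torus-fixed point of $\toricvar_\Sigma$, yielding a smooth point of $\toricbun$ and an isomorphism $\widehat{\sO}_{\toricbun,x}\iso \Kbar\ldb x_1,\dots,x_d\rdb$ that separates base coordinates from fibre coordinates. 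The corresponding rank-$d$ valuation reads first the base directions, then the toric ones.

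Second, I would use the character decomposition for toric bundles to write, for each $n\geq 1$,
\[
H^0(\toricbun,n\rho(D)+n\pi^*L)\ \iso\ \bigoplus_{m\in (n\Delta)\cap M} H^0\bigl(\B,\ nL+c(m)\bigr)\cdot\chi^m,
\]
and interpret the valuation through this decomposition: the last $t$ coordinates of $\nu(\chi^m\otimes s)$ are $m$ (read off by the toric part of the flag, which computes Okounkov bodies of $\toricvar_\Sigma$ as $\Delta$), and the first $g$ coordinates are $\nu_{Y_\bullet}(s)$ on $\B$. Passing to the closure of $\bigcup_n \frac1n\nu$, the Okounkov body is the ``fibered polytope''
\[
\Delta(\rho(D)+\pi^*L)=\bigl\{(m,\alpha)\ \bigm|\ m\in\Delta,\ \alpha\in\Delta(L+c(m))\bigr\}\subset \Delta\times\bbR^g.
\]

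Third, I would compute the Boucksom-Chen transform by tracking how the filtration by heights interacts with the character decomposition. The key input is that at each place $v$, the norm of $\chi^m\otimes s$ for the metric induced by $\arho(\ov{\Delta})+\pi^*\ov{L}$ equals the norm of $s$ for $\ov{L}+\ac(m)$ shifted by the contribution of $\theta_v(m)$: this follows from the explicit description of $\arho$ via toric local roof functions established in Section \ref{sec:toric_bundles} and the standard Legendre-duality computation on $\toricvar_\Sigma^{\an}$. Summing over places weighted by $\weighti$ converts $\theta_v$ into the global roof function $\theta$, so the height filtration decomposes direct-sum-wise with an $m$-dependent shift. Consequently
\[
G_{\arho(\ov{\Delta})+\pi^*\ov{L}}(m,\alpha)\ =\ G_{\ov{L}+\ac(m)}(\alpha)+\theta(m)
\]
on the Okounkov body.

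Finally, Theorem \ref{thm:ballay} gives the essential minimum as the supremum over the Okounkov body, yielding
\[
\zeta_{\ess}(\arho(\ov{\Delta})+\pi^*\ov{L})=\sup_{m\in\Delta}\Bigl(\sup_{\alpha\in\Delta(L+c(m))}G_{\ov{L}+\ac(m)}(\alpha)+\theta(m)\Bigr)=\sup_{m\in\Delta}\bigl(\zeta_{\ess}(\ov{L}+\ac(m))+\theta(m)\bigr),
\]
where bigness of some fibre ensures the outer Okounkov body is full-dimensional. For the absolute minimum under the extra semipositivity assumption, the same argument with $\inf$ gives the second formula over $\Delta^\circ$; the restriction to the interior is forced because at boundary $m$ the fibrewise body $\Delta(L+c(m))$ may degenerate, so Theorem \ref{thm:ballay}'s absolute-minimum clause applies fibrewise only on $\Delta^\circ$, while the remark after the theorem explains that bigness on all of $\Delta$ lets one extend to $\Delta$ by continuity.

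The main obstacle will be Step three: producing a genuinely \emph{filtered} direct-sum decomposition of $H^0(\toricbun,n\rho(D)+n\pi^*L)$ compatible with the Boucksom-Chen filtration by heights. Place-by-place this is a computation on toric Berkovich analytifications using the roof functions, but getting the global $\theta$ to appear requires matching the local shift with the weights $\weighti$ used in the definition of $\ah$, and ensuring that the decomposition is orthogonal (up to controlled error) for the adelic norm so that the filtration respects the direct sum. Once this decomposition-of-filtered-linear-series statement is in hand, the remaining steps are convex-geometric bookkeeping.
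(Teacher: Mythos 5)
Your proposal does not prove the statement in question. Theorem \ref{thm:ballay} is a general comparison, valid for an arbitrary projective variety $X$ and an arbitrary adelic $\bbR$-Cartier divisor $\ov{D}$ with $D$ big, between the Zhang minima of $\ov{D}$ and the extrema of the Boucksom--Chen transform $G_{\ov{D}}$ over the Okounkov body $\Delta(D)$. Your very first sentence is ``apply Ballaÿ's theorem (Theorem \ref{thm:ballay})'' --- that is, you take the statement you were asked to prove as an input, and then go on to establish a different result, namely the toric-bundle formula $\zeta_{\ess}(\arho(\ov{\Delta})+\pi^*\ov{L})=\sup_{m\in\Delta}\{\zeta_{\ess}(\ov{L}+\ac(m))+\theta(m)\}$ and its analogue for $\zeta_{\abs}$. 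That is Theorem \ref{thm:minima} of the paper, not Theorem \ref{thm:ballay}; with respect to the assigned statement your argument is circular, and nothing in your sketch addresses why, for a general $X$ and $\ov{D}$, the essential minimum should equal $\max_{\alpha\in\Delta(D)}G_{\ov{D}}(\alpha)$ or the absolute minimum should equal $\inf_{\alpha\in\Delta(D)}G_{\ov{D}}(\alpha)$.

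For the record, the paper does not prove Theorem \ref{thm:ballay} either: it is imported from the literature, as a combination of \cite[Proposition 7.1]{F_Ballay_Succesive_minima} and \cite[Corollary 1.2]{ballay:hal-03606531}, with \cite[Theorem 1.7]{Arithmetic_Demailly_Qu_Yin} used to relax one semipositivity hypothesis. A genuine proof would have to relate $\zeta_{\ess}$ and $\zeta_{\abs}$ to the asymptotic behaviour of the height filtration $F^t H^0(X,\sO(n\ov{D}))$ (essentially the asymptotic maximal and minimal slopes of these adelically normed spaces) and then identify those asymptotic invariants with $\max G_{\ov{D}}$ and $\inf G_{\ov{D}}$ via the theory of arithmetic Okounkov bodies; none of that machinery appears in your proposal. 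Separately, the content you did write is a serviceable outline of the paper's Section \ref{sec:okounkov} (Theorems \ref{thm:geometric_okounkov_body} and \ref{thm:arithmetic_okounkov_body} and the deduction of Theorem \ref{thm:minima}), including the correct identification of the fibered Okounkov body and the shift of the concave transform by the global roof function $\theta(m)$ --- but it answers a different question.
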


Furthermore, the Okounkov body and the concave transform are related to the volume and the arithmetic volume respectively. We will not discuss arithmetic volume functions as we will only use its relation to heights via the arithmetic Hilbert-Samuel theorem. The following result contains a geometric part \cite[Theorem A]{lazmus} and an arithmetic one taken from \cite[Theorem 6.4]{F_Ballay_Succesive_minima}, but which is implicit in \cite{Boucksom_Chen_Okounkov_bodies_of_filtered_linear_series}. We apply the normalization of arithmetic volumes in \cite{Boucksom_Chen_Okounkov_bodies_of_filtered_linear_series} which differs from the one in \cite{F_Ballay_Succesive_minima}.

\begin{theorem}\label{thm:arithmetic_volume}
    Let $D$ be a big $\bbR$-Cartier divisor on $X$. Then,
    \[
    \vol(D) = d!\vol_{\bbR^d}(\Delta(D))
    \]
    If $\ov{D}$ is an adelic divisor whose underlying divisor is $D$, then
\[\widehat{\vol}(\ov{D}) = (d + 1)!\int_{\Delta(D)}\max\{0, G_{\ov{D}}\}d\lambda\]
and
\[\widehat{\vol}_{\chi}(\ov{D}) \leq (d + 1)!\int_{\Delta(D)} G_{\ov{D}}d\lambda,\]
with equality if $\inf_{\alpha\in\Delta(D)} G_{\ov{D}}(\alpha) > -\infty$.
\end{theorem}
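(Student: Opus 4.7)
The plan is to reduce all three identities to counting problems on the filtered graded algebra $V^\bullet = \bigoplus_n H^0(X, n\ov{D})$, translating these counts into volumes of slices of the Okounkov body via Khovanskii's semigroup lemma.

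For the geometric formula I would follow Lazarsfeld-Mustaţă. With $\z$ fixed, the valuation $\nu_n$ is injective on $H^0(X, nD)\setminus\{0\}$, so $\dim H^0(X, nD) = \lvert\nu_n(H^0(X, nD)\setminus\{0\})\rvert$. The graded semigroup
\[
\Gamma \defeq \bigcup_n \nu_n(H^0(X, nD)\setminus\{0\})\times\{n\} \subset \bbZ^d \times \bbZ_{\geq 0}
\]
satisfies the standard finite generation / Khovanskii hypotheses (here the bigness of $D$ is crucial), and its slice at height one is dense in $\Delta(D)$. Khovanskii's semigroup lemma then yields $\lvert\nu_n(\cdots)\rvert = n^d \vol_{\bbR^d}(\Delta(D)) + o(n^d)$, which after dividing by $n^d/d!$ gives the geometric identity.

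For the first arithmetic identity I would iterate the above with the Boucksom-Chen filtration. The sub-semigroups $\Gamma^t \defeq \bigcup_n \nu_n(F^{nt}V_n\setminus\{0\})\times\{n\}$ have Okounkov bodies $\Delta^t(\ov{D}) = \{G_{\ov{D}} \geq t\}$ by the very definition of $G_{\ov{D}}$, so fibrewise application of the geometric step gives $\dim F^{nt}V_n \sim n^d \vol_{\bbR^d}(\Delta^t(\ov{D}))/d!$. The count $\widehat{h}^0(n\ov{D})$ of sections with nonnegative filtration level equals $\int_0^\infty \dim F^{nt}V_n\,dt$ up to a controllable error, and Cavalieri-Fubini translates this asymptotic into $(d+1)!\int_{\Delta(D)}\max\{0,G_{\ov{D}}\}\,d\lambda$, which is the second identity.

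The same Cavalieri argument carried out over all $t \in \bbR$ produces the right-hand side $(d+1)!\int_{\Delta(D)} G_{\ov{D}}\,d\lambda$, while on the left one has $\widehat{\chi}$ in place of $\widehat{h}^0$. The main obstacle is the $\chi$-inequality and its equality criterion: Minkowski's second theorem bounds $\widehat{\chi}(n\ov{D})$ above by the sum of the logarithmic successive minima, giving $\leq$ directly. Equality requires the defect in Minkowski's bound to be $o(n^{d+1})$, and this is precisely what the hypothesis $\inf G_{\ov{D}} > -\infty$ controls: it prevents the lowest minima from drifting to $-\infty$ faster than the Okounkov body can absorb, and a compactness argument on the bounded body $\Delta(D)$ then closes the gap between $\widehat{\chi}$ and $(d+1)!\int G_{\ov{D}}\,d\lambda$.
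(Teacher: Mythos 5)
The paper does not prove this theorem; it is quoted verbatim as a package of known results---the geometric identity from \cite[Theorem A]{lazmus}, the two arithmetic statements from \cite[Theorem 6.4]{F_Ballay_Succesive_minima} (implicit already in \cite{Boucksom_Chen_Okounkov_bodies_of_filtered_linear_series}). Your sketch reconstructs the arguments in exactly those references: injectivity of $\nu$ and Khovanskii's semigroup lemma for the geometric volume; the same semigroup count applied to each filtered piece and then Cavalieri/Fubini over $t$ for $\widehat{\vol}$; and Minkowski's second theorem on the adelic lattices $H^0(X,nD)$ for the $\chi$-inequality. Two small points deserve flagging. First, the level-set identification is ``$\Delta^t(\ov D)$ equals $\{G_{\ov D}\geq t\}$ up to a null set'' rather than an equality; by upper semicontinuity one picks up only the boundary, which is harmless in the Cavalieri step, but as written it is not literally the definition. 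Second, your explanation of the role of $\inf G_{\ov D}>-\infty$ is off. The Minkowski defect between $\widehat{\chi}(n\ov D)$ and the sum of (negated, normalized) successive minima is always $O(h^0\log h^0)=o(n^{d+1})$, independently of how negative the minima are; it is not the quantity the hypothesis controls. The actual obstruction to equality is in the other step---the passage from the normalized sums of minima to the integral $\int_{\Delta(D)}G_{\ov D}$. The Okounkov bodies $\Delta^t(\ov D)$ only see the asymptotic distribution of the filtration jumps, and if $G_{\ov D}$ is unbounded below, a positive fraction of the minima can escape to $-\infty$ (after dividing by $n$) in a way that is invisible to the limiting bodies, making the $\limsup$ defining $\widehat{\vol}_\chi$ strictly smaller than $(d+1)!\int G_{\ov D}$. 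The hypothesis $\inf G_{\ov D}>-\infty$ is precisely a uniform integrability condition ensuring that this mass loss does not happen, which is how Boucksom--Chen close the argument. With that adjustment, your proposal is faithful to the cited proofs.
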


This in turn allows a comparison with heights and intersection numbers by the (arithmetic) Hilbert-Samuel theorem (see \cite[Proposition 1.31]{debarre_higher_dimensional_alggeo} and \cite[Theorem 5.3.2]{Moriwaki_Adelic_divisors_on_Arithmetic_Varieties}).

\begin{theorem}
    If $D$ is nef, then $\vol(D) = \deg(D)$. If $\ov{D}$ is semipositive, then $\widehat{\vol}_{\chi}(\ov{D}) = h_{\ov{D}}(X)$.
\end{theorem}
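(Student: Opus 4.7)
The plan is to prove the two statements in turn, each by a Hilbert--Samuel style asymptotic argument.

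For the geometric part, I would start from the definition $\vol(D) = \limsup_{n\to\infty} \frac{d!}{n^d} h^0(X, \sO(nD))$. Asymptotic Riemann--Roch gives
\[
\chi(X, \sO(nD)) = \frac{D^d}{d!}\, n^d + O(n^{d-1}),
\]
so it suffices to show $h^i(X, \sO(nD)) = o(n^d)$ for each $i \geq 1$ whenever $D$ is nef. When $D$ is nef and big, Fujita's vanishing theorem yields $h^i(X, \sO(nD)) = O(n^{d-1})$ for $i \geq 1$, whence $\vol(D) = D^d = \deg(D)$. When $D$ is nef but not big, both sides vanish: $\vol(D) = 0$ by definition, and $D^d = 0$ by the standard fact that a nef divisor is big precisely when $D^d > 0$.

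For the arithmetic part, I would invoke the arithmetic Hilbert--Samuel theorem of Gillet--Soul\'e, extended to adelic semipositive line bundles by Zhang and Moriwaki. In the ample case it gives
\[
\widehat{\chi}(X, \sO(n\ov{D})) = \frac{h_{\ov{D}}(X)}{(d+1)!}\, n^{d+1} + o(n^{d+1}),
\]
from which $\widehat{\vol}_{\chi}(\ov{D}) = h_{\ov{D}}(X)$ follows by dividing through by $n^{d+1}/(d+1)!$ and passing to the $\limsup$. To extend to arbitrary big $D$ with semipositive $\ov{D}$, I would approximate by perturbations $\ov{D} + \epsilon\ov{A}$ with $\ov{A}$ ample and semipositive, using continuity of both the arithmetic intersection product (multilinearity of the height in each slot) and of $\widehat{\vol}_{\chi}$ on the semipositive cone, then let $\epsilon \to 0$.

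The main obstacle is the arithmetic Hilbert--Samuel theorem itself in the adelic setting: Gillet--Soul\'e originally work with smooth arithmetic varieties and $C^\infty$ positive Hermitian metrics, and the passage to general semipositive adelic metrics requires approximation by model metrics together with continuity of the Monge--Amp\`ere operator at both archimedean and non-archimedean places, carried out by Moriwaki in the cited reference. An alternative route avoiding this is to combine the Boucksom--Chen identity recorded in Theorem~\ref{thm:arithmetic_volume} (which becomes an equality once $G_{\ov{D}}$ is bounded below, and semipositivity ensures this) with the link between the concave transform and the height used in Ballay's work to conclude directly.
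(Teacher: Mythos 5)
The paper does not prove this result: it is stated as a citation of the (arithmetic) Hilbert--Samuel theorem, with the geometric part attributed to Debarre (Proposition 1.31 of \emph{Higher-Dimensional Algebraic Geometry}) and the arithmetic part to Moriwaki (\emph{Adelic divisors on arithmetic varieties}, Theorem 5.3.2). Your sketch of how one would actually establish each half is accurate and follows the route those references take: asymptotic Riemann--Roch together with the bound $h^i(X,\sO(nD)) = O(n^{d-1})$ for nef $D$ (the case $D$ nef but not big being trivial on both sides), and the Gillet--Soul\'e arithmetic Hilbert--Samuel theorem extended to semipositive adelic metrics by approximation on the ample cone.

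One caveat worth flagging: your suggested alternative route via Theorem~\ref{thm:arithmetic_volume} combined with Ballaÿ's link between the concave transform and heights is, as far as the literature stands, logically circular. The inequality $\widehat{\vol}_\chi(\ov{D}) \leq (d+1)!\int_{\Delta(D)} G_{\ov{D}} d\lambda$ (with equality when the transform is bounded below) is a Boucksom--Chen statement about filtered linear series, but the identification of the right-hand side with $h_{\ov{D}}(X)$ already goes through the arithmetic Hilbert--Samuel theorem in Ballaÿ's and Moriwaki's treatments. So the alternative does not avoid the main obstacle you identify; it presupposes it. The primary argument you give is the right one.
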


\subsection{(Arithmetic) Okounkov bodies of toric varieties}

There are two ways to associate a convex body $\Delta$ and a concave function $\theta$ on $\Delta$ to the datum of a semipositive toric adelic Cartier divisor $\ov{D}$. The first construction is toric by nature, it is given by the Newton polytope and the toric roof function as in \cite{toricheights}. The second construction does not depend on the toric nature(except for the choice of a flag). It is given by the Okounkov body and the Boucksom-Chen concave transform. In the setting of toric bundles, it is important to relate these constructions to apply results on toric varieties.

The equality (up to translation) of the Newton polytope and roof function on the toric side with the Okounkov body and concave transform is mentioned in passing in \cite[Section 4.5]{Boucksom_Chen_Okounkov_bodies_of_filtered_linear_series} using the work of \cite{wn}. The necessary arguments are presented in \cite[Section 5]{torpos} in detail. We prove the equality of the two constructions here as it is not explicitly stated in \cite{torpos}.

Let us first recall the geometric statement. We want to study big toric divisors on a proper toric variety $\toricvar_\Sigma$. We may refine $\Sigma$ in such a way that it defines a smooth projective variety, see \cite[Chapter 11]{Cox_Little_Schneck_Toric}. This does not change the Newton polytope. We may then take prime toric divisors $D_1,\dots,D_t$ such that

\[
X_\bullet:X_\Sigma\supset D_1 \supset D_1 \cap D_2 \supset\dots \supset D_1\cap \dots \cap D_t =\{p\}.
\]
defines a flag. Let $v_i$ denote the primitive generator of the ray corresponding to $D_i$. Then, the $v_i$ form a basis of $N$ and induce an isomorphism $\bbZ^t \cong N$. Its dual basis determines an isomorphism $M\cong \bbZ^t$.

\begin{proposition}[\cite{lazmus} Proposition 6.1]\label{prop:toric_okounkov}
    Let $D$ be a toric divisor not containing any of the $D_1,\dots,D_t$ in its support. Let $\Delta$ be its Newton polytope. Then the Okounkov body of $\rho(\Delta)$ is $\Delta$ under the identification $M\cong \bbZ^t$. In particular, the valuation of the section $\chi^m$ is $m$.
\end{proposition}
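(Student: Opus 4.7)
The plan is to leverage the smooth toric structure at $p$ to identify the valuation with evaluation against $v_1,\dots,v_t$, and then check that monomials exhaust the image of $\nu$.

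First, I would set up local coordinates at $p$. Since $\toricvar_\Sigma$ is smooth, the point $p = D_1 \cap \dots \cap D_t$ is the torus-fixed point of the affine chart $U_\sigma$ associated to the maximal cone $\sigma = \bbR_{\geq 0}\langle v_1,\dots,v_t\rangle$. Because the $v_i$ form a $\bbZ$-basis of $N$, the dual basis $u_1,\dots,u_t \in M$ gives an isomorphism $U_\sigma \cong \bbA^t$ via $\chi^{u_i} \mapsto x_i$, and under this isomorphism $D_i \cap U_\sigma = \{x_i = 0\}$. Hence the completed local ring at $p$ is $\widehat{\sO}_{X,p} \cong \Kbar[[x_1,\dots,x_t]]$, and the flag-induced rank-$t$ valuation $\nu$ is the one used in the construction of the Okounkov body (with lexicographic order on $(a_1,\dots,a_t)$).

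Second, I would compute $\nu(\chi^m)$. Writing $D = \sum_\rho a_\rho D_\rho$, the assumption that $D_1,\dots,D_t \not\subset \Supp(D)$ gives $a_1 = \dots = a_t = 0$, so $D|_{U_\sigma}$ is trivial as a Cartier divisor on $U_\sigma$, meaning that $\sO(D)|_{U_\sigma}$ has a canonical section $\one$ and the rational section $\chi^m$ is expressed as $\chi^m = x_1^{\langle m,v_1\rangle}\cdots x_t^{\langle m,v_t\rangle}\cdot \one$. For $m \in \Delta$ the exponents $\langle m,v_i\rangle \geq -a_i = 0$ are nonnegative (this is exactly the defining inequality of the Newton polytope at the rays $v_1,\dots,v_t$), so $\chi^m$ extends to a genuine section of $\sO(D)$ near $p$ with $\nu(\chi^m) = (\langle m,v_1\rangle,\dots,\langle m,v_t\rangle)$. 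Under the dual-basis identification $M \cong \bbZ^t$ given by $m \mapsto (\langle m,v_1\rangle,\dots,\langle m,v_t\rangle)$, this reads $\nu(\chi^m) = m$, which is the final assertion of the statement.

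Third, I would deduce the Okounkov body. The sections $H^0(\toricvar_\Sigma, \sO(nD))$ have a basis of characters $\{\chi^m : m \in nD \cap M\} = \{\chi^m : m \in n\Delta \cap M\}$, by the standard toric description of global sections. For any nonzero $s = \sum c_m \chi^m \in H^0(\toricvar_\Sigma, \sO(nD))$, the valuation $\nu(s)$ equals the lexicographically minimal $\nu(\chi^m)$ among those $m$ with $c_m \neq 0$, hence $\nu(s) \in n\Delta \cap M$ and
\[
\nu(H^0(\toricvar_\Sigma,\sO(nD))) \setminus \{\infty\} = n\Delta \cap M
\]
under $M \cong \bbZ^t$. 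Taking the closure of $\bigcup_n \tfrac{1}{n}(n\Delta \cap M)$ recovers $\Delta$, because $\Delta$ is a rational polytope and its rational points are dense; this yields the identification $\Delta(\rho(\Delta)) = \Delta$.

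The only delicate point is the second step: one must trivialize $\sO(D)$ correctly on $U_\sigma$ so that the power-series expansion of $\chi^m$ has exponent exactly $(\langle m,v_1\rangle,\dots,\langle m,v_t\rangle)$ with no shift. This is handled by the vanishing $a_1=\dots=a_t=0$, which ensures the distinguished toric trivialization at $p$ coincides with $\one$ and introduces no offset between the Newton polytope and the Okounkov body.
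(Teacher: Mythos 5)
Your proof is correct and follows the same route as the cited reference \cite{lazmus}. The key points are all present: on the smooth chart $U_\sigma$ the flag-induced coordinates are the characters $\chi^{u_i}$, the hypothesis $a_1=\dots=a_t=0$ trivializes $\sO(D)$ there so that $\nu(\chi^m)=(\langle m,v_1\rangle,\dots,\langle m,v_t\rangle)$ with no offset, and density of rational points in the rational polytope $\Delta$ completes the identification of the Okounkov body.
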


By translating $\Delta$, we can always ensure that the conditions in the above proposition are satisfied. We now introduce the toric roof function and prove an arithmetic analogue of the statement above. We apply \cite[Proposition 5.1]{torpos} in order to give an alternative, but equivalent definition of the toric roof functions.

\begin{definition}\label{defi:local_roof_function}
    Let $\ov{D}$ be a toric metrized $\bbR$-Cartier divisor over a field $K$ with absolute value $|\blank|_v$. Let $\Delta$ be the Newton polytope of the underlying geometric Cartier divisor. Then, the local roof function $\theta$ of $\ov{D}$ is the unique continuous function $\theta_{v,\ov{D}}:\Delta \to \bbR$ satisfying the following condition: For $m \in l\Delta$, the toric section $\chi^m \in \Gamma(X,lD)$ satisfies 
    \[
        - \log \|\chi^m\|_{v,\sup} = \theta_{\ov{D}} (m/l).
    \]
    If $\ov{D}$ is a toric adelic $\bbR$-Cartier divisor over a global field $K$, we define its global roof function as a sum over local roof functions
    \[
        \theta_{\ov{D}} (x) = \sum_{v \in M_K} \weightv \theta_{\ov{D},v} (x).
    \]
\end{definition}

The sum is finite since the local roof function is constantly $0$ at all the places for which $\ov{D}$ carries the canonical metric.

\begin{proposition}\label{prop:arith_toric_bc}
    Let $D$ be a toric divisor not containing any of the $D_1,\dots,D_t$ in its support. Let $\Delta$ be its Newton polytope. Let $\ov{D}$ be an adelic Cartier-divisor obtained by endowing $D$ with toric metrics. Identifying the Newton polytope with the Okounkov body as in Proposition \ref{prop:toric_okounkov} the global roof function agrees with the Boucksom-Chen transform.
\end{proposition}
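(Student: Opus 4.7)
The plan is to show $G = \theta$ on $\Delta$ by establishing one inequality directly on the dense set of rational points and then forcing equality via a volume/integral comparison.

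\emph{Step 1 (One inequality on a dense set).} For each $l\geq 1$ and each $m \in l\Delta \cap M$, the toric section $\chi^m \in H^0(X_\Sigma, lD)$ has valuation $\nu(\chi^m) = m$ by Proposition \ref{prop:toric_okounkov}. Its place-by-place sup norm is computed by Definition \ref{defi:local_roof_function} as $-\log\|\chi^m\|_{v,\sup} = l\theta_{v,\ov{D}}(m/l)$, so summing with the adelic weights $\weightv$ yields
\[
h(\chi^m) \;=\; -\sum_{v\in M_K}\weightv \log\|\chi^m\|_{v,\sup} \;=\; l\,\theta_{\ov{D}}(m/l).
\]
Thus $\chi^m \in F^{l\theta(m/l)}H^0(X, lD)$, so $m/l = \nu(\chi^m)/l \in \Delta^{\theta(m/l)}(\ov{D})$. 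Under the convention of the excerpt this yields $G_{\ov{D},X_\bullet}(m/l) \leq \theta_{\ov{D}}(m/l)$ for every rational point $m/l \in \Delta$. Since such points are dense in $\Delta$, and (in the semipositive case) both $G$ and $\theta$ are concave and hence continuous on the interior of $\Delta$, the inequality extends to $G \leq \theta$ on the interior, and to the whole of $\Delta$ by upper semicontinuity of $G$ and continuity of $\theta$.

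\emph{Step 2 (Equality via integrals).} I combine two independent evaluations of the arithmetic volume. On the Okounkov body side, Theorem \ref{thm:arithmetic_volume} gives
\[
\widehat{\vol}_{\chi}(\ov{D}) \;=\; (d+1)!\int_{\Delta(D)} G_{\ov{D},X_\bullet}\, d\lambda,
\]
valid because in the semipositive case $\theta$ is bounded on the compact polytope $\Delta$ and hence $G \leq \theta$ is bounded above (combined with $G$ concave implies $\inf G > -\infty$). On the toric side, the arithmetic Hilbert-Samuel theorem together with the toric height formula \cite[Theorem~5.2.5]{toricheights} gives
\[
\widehat{\vol}_{\chi}(\ov{D}) \;=\; h_{\ov{D}}(X_\Sigma) \;=\; (d+1)!\int_\Delta \theta_{\ov{D}}\, d\lambda.
\]
Equating these expressions shows $\int_\Delta G\, d\lambda = \int_\Delta \theta\, d\lambda$, which combined with $G \leq \theta$ from Step 1 forces $G = \theta$ almost everywhere, and hence, by continuity on the interior and upper semicontinuity on the boundary, everywhere on $\Delta$.

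\emph{Step 3 (Reduction to the semipositive case).} A general toric adelic Cartier divisor is, by the dictionary with virtual adelic polytopes recalled in Section \ref{subsec:toric_varieties_prelim}, a difference $\ov{D} = \ov{D}_1 - \ov{D}_2$ of two semipositive toric adelic Cartier divisors with a common Newton polytope after a suitable translation. The roof function is manifestly additive along this decomposition, and the same linearity holds for the Boucksom-Chen transform by compatibility of the Okounkov-body construction with tensor products of line bundles on $X_\Sigma$. The semipositive case then implies the general case.

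\emph{Main obstacle.} The delicate point is Step 2: one must check that Theorem \ref{thm:arithmetic_volume} can indeed be invoked in this context (ensuring $\inf G > -\infty$ on $\Delta$), and, more substantially, one must justify linearity of the Boucksom-Chen transform in Step 3 with enough care to reduce the integrable (non-semipositive) case to the semipositive one. Both ingredients lean on standard facts about filtered linear series and on the comparison between arithmetic volume and height supplied by the arithmetic Hilbert-Samuel theorem.
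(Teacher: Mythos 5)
You are taking the ``alternative'' route that the paper itself mentions in passing at the end of its own proof --- comparing integrals via the arithmetic Hilbert--Samuel theorem and the toric height formula --- rather than the primary argument. The paper instead obtains both pointwise inequalities directly: the section $\chi^m$ gives one, and the orthogonality of toric eigensections (Theorem \ref{thm:orthogonality_toric}) gives the reverse. That orthogonality step is precisely what makes the paper's proof work without any positivity hypothesis on the metric.

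The substantive gap is Step~3. The Boucksom--Chen transform is \emph{not} additive in the adelic line bundle: Okounkov bodies and concave transforms of filtered graded linear series are only super-additive under tensor products, so there is no identity $G_{\ov{D}_1-\ov{D}_2} = G_{\ov{D}_1} - G_{\ov{D}_2}$, even in the toric setting. Moreover, the equality $\widehat{\vol}_{\chi}(\ov{D}) = h_{\ov{D}}(X)$ invoked in Step~2 genuinely requires semipositivity. The proposition is stated for arbitrary toric metrics, and your Steps~1--2 cover only the semipositive case; the proposed linearity reduction does not salvage the rest. This is exactly what the paper's orthogonality argument avoids, since it is pointwise and works place by place for any integrable toric metric.

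Two smaller remarks. The claim ``$G$ concave and bounded above implies $\inf G > -\infty$'' is false as stated (a concave function on a polytope can diverge to $-\infty$ at the boundary); fortunately this does no damage, since you only need the inequality $\widehat{\vol}_{\chi}(\ov{D}) \leq (d+1)!\int_{\Delta}G$ from Theorem \ref{thm:arithmetic_volume}, which, combined with the one-sided pointwise bound and $\widehat{\vol}_{\chi}(\ov{D}) = (d+1)!\int_{\Delta}\theta$, already pinches $\int_{\Delta}G = \int_{\Delta}\theta$. Finally, watch the sign: under the standard decreasing-filtration convention the section $\chi^m$ gives $G(m/l) \geq \theta(m/l)$, which is the direction the paper's own proof uses; the convention quoted in Section \ref{prelim_okounkov} is internally inconsistent with the rest of the paper. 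The integral comparison survives either sign, but the exposition should settle on one.
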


The main technical result we use is the orthogonality of toric sections. We recall the statement for future use.

\begin{theorem}[Corollary 5.4 \cite{torpos}]\label{thm:orthogonality_toric}
    Let $\ov{D}$ be a toric adelic $\bbR$-divisor on $X$ and $s = \sum_{\Delta \cap M}\gamma_m \chi^m \in \Gamma(X,D)$. Then,
    \[
        \|s\|_{\sup} \geq \max_{m \in \Delta\cap M} \|\gamma_m \chi^m\|_{\sup}.
    \]
\end{theorem}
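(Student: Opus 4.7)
The plan is to prove the inequality place by place, since the sup norm is defined locally. At each place $v$ the key input is the toricness of the metric: the pointwise norm $\|\blank\|_v$ is invariant under the canonical compact subtorus of $\T_v^{\an}$, and one exploits this invariance through two different but parallel mechanisms according as $v$ is non-archimedean or archimedean.

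At a non-archimedean place, recall that the tropicalization map $\T_v^{\an}\to N_\bbR$ admits a canonical continuous section $n\mapsto \xi_n$ into Gauss points. The Gauss seminorm is multiplicative and satisfies $|\chi^m|(\xi_n)=e^{-\langle n,m\rangle}$, so for a Laurent polynomial $f=\sum \gamma_m\chi^m$ one obtains the maximum formula $|f|(\xi_n)=\max_m |\gamma_m|_v\cdot e^{-\langle n,m\rangle}$. Because the metric on $\sO(D)$ is toric it descends to a function of $n$ alone, so $\|\gamma_m\chi^m\|_v(\xi_n)=|\gamma_m|_v e^{-\langle n,m\rangle+\psi_v(n)}$, where $\psi_v$ is the concave function associated to $\ov{D}$. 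Combining these identities gives
\[
\|s\|_v(\xi_n)=\max_{m\in\Delta\cap M}\|\gamma_m\chi^m\|_v(\xi_n),
\]
and taking the supremum over $n\in N_\bbR$ yields $\|s\|_{v,\sup}\geq \max_m\|\gamma_m\chi^m\|_{v,\sup}$ at $v$.

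At an archimedean place the argument is Fourier-analytic. Let $S=(S^1)^t\subset \T(\bbC)$ be the compact subtorus, and let $du$ denote its normalized Haar measure. Since the metric is toric, the sup norm is $S$-invariant: $\|u^*s\|_{v,\sup}=\|s\|_{v,\sup}$ for every $u\in S$. Writing $u^*s=\sum_m\gamma_m\chi^m(u)\chi^m$ and averaging the twist $\chi^{-m_0}(u)\cdot u^*s$ against $du$, orthogonality of characters of $S$ kills every summand with $m\neq m_0$ and leaves $\gamma_{m_0}\chi^{m_0}$. Since $|\chi^{-m_0}(u)|=1$ on $S$, Minkowski's inequality in $L^1(S,du)$ produces
\[
\|\gamma_{m_0}\chi^{m_0}\|_{v,\sup}=\left\|\int_S \chi^{-m_0}(u)\,u^*s\,du\right\|_{v,\sup}\leq \int_S \|u^*s\|_{v,\sup}\,du=\|s\|_{v,\sup},
\]
for every $m_0\in\Delta\cap M$, which gives the bound at $v$.

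The main point to be careful about is that the archimedean and non-archimedean steps are genuinely different: non-archimedean orthogonality is essentially free from the multiplicativity of Gauss seminorms, whereas the archimedean step genuinely uses both the $S$-invariance of a toric metric and the orthogonality of unitary characters of $S$. Both arguments rely on the fact that $\chi^m$ for $m\in\Delta\cap M$ really is a global section of $\sO(D)$, so that all the quantities $\|\gamma_m\chi^m\|_{v,\sup}$ are computed with respect to the same metrized line bundle; this is the content of the standard identification of the character lattice points of $\Delta$ with the toric sections of $\sO(D)$.
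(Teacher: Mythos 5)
The paper does not prove this statement at all: it is imported verbatim as \cite[Corollary 5.4]{torpos} and used as a black box (e.g.\ in Proposition \ref{prop:arith_toric_bc} and Lemma \ref{lemm:semipositive_rho}). So there is no in-paper argument to compare against; your proposal is a self-contained reconstruction, and it is correct. It also follows the standard route for this kind of orthogonality statement: non-archimedean places via the Gauss-point section of tropicalization and the maximum formula for Gauss seminorms, archimedean places via averaging against characters of the compact torus, with the toricness of the metric supplying the invariance needed in both cases.

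Two small points are worth making explicit if you write this up. First, in the non-archimedean step, the passage from $\sup_{n}\|\gamma_m\chi^m\|_v(\xi_n)$ to $\|\gamma_m\chi^m\|_{v,\sup}$ needs the observation that the pointwise norm of a monomial section factors through tropicalization (both $|\chi^m(\cdot)|$ and the toric metric do), together with the density of $\T^{\an}$ in $X^{\an}$; without this the supremum over Gauss points is a priori only a lower bound for the supremum over $X^{\an}$, which happens to be the direction you need anyway, but the equality is what makes the conclusion read off cleanly. Second, in the archimedean step, the identity $u^*s=\sum_m\gamma_m\chi^m(u)\chi^m$ is an identity of sections of $\sO(D)$ only via the canonical $\T$-linearization, and the isometry $\|u^*s\|_{v,\sup}=\|s\|_{v,\sup}$ for $u\in S$ is exactly the statement that this linearization is an isometry over the unit torus, which is how the paper phrases toricness of metrics before Proposition \ref{prop:mrho}. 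Both points are routine, and your concluding remark about $\chi^m$ being a genuine global section of $\sO(D)$ for $m\in\Delta\cap M$ correctly identifies the remaining hypothesis. No gaps.
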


\begin{proof}[Proof of Proposition \ref{prop:arith_toric_bc}]
    We have that the concave transform $G_{\bar{D},X_\bullet}(m/l)$ is bounded from below by $\sum_v \theta_{\bar{D},v} (m/l)$ as $\chi^m$ provides a $\Q$-section with valuation $m/l$ and height $\theta(m/l)$. Let us now prove that $G_{\bar{D},X_\bullet}(m/l) \leq \theta(m/l)$. By definition of the Boucksom-Chen transform, for any $\epsilon >0$, there is a sufficiently large $N$ such that there is a section $s \in \Gamma(ND)$ of valuation $mN/l$ generated by sections of height $<N(G_{\bar{D},X_\bullet}(m/l) + \epsilon)$. So write $s = \sum s_i$. At least one $s_i$ will be of the form $\gamma_m \chi^m + \sum_{m' \neq m} \gamma_{m'} \chi^{m'}$. By the orthogonality of eigenspaces this vector has height $\geq N\theta(m/l)$ finishing the proof.
\end{proof}

We may alternatively conclude the equality of functions using the equality of their integrals provided by \cite[Theorem 5.6]{torpos} and Theorem \ref{thm:arithmetic_volume}.

\subsection{Operations on arithmetic Chow homology}\label{subsec:chow_homology}

Recall that $\sS$ denotes a geometrically irreducible projective curve over a field or the spectrum of the ring of integers in a number field. Let $\mathscr{X}$ be a flat projective scheme over $\sS$. Then, similarly to algebraic geometry Gillet and Soul\'e require regularity of the scheme in order to define an intersection theory. However, it suffices to require the generic fibre $X=\mathscr{X}_K$ to be smooth in order to define arithmetic Chow groups and operational Chern classes associated to hermitian line bundles, see \cite[Section 2.4]{gillet_soule_arithmetic_rr}. The operational perspective is convenient even if one focuses on regular arithmetic varieties as it allows us to pass to fibre products, such as special fibres, that are no longer necessarily regular.

Let us give a basic recollection of arithmetic Chow groups and the intersection with hermitian line bundles. Let $\mathscr{X}$ be a flat projective scheme over $\sS$ with smooth generic fibre. Then, we can define its arithmetic Chow groups $\ArChow_k(\mathscr{X})$ as equivalence classes of pairs $(Z,g_Z)$ of a $k$-cycle $Z$ on $\mathscr{X}$ and Green currents $g_Z$ for $Z_\bbC$ at all archimedean places modulo rational equivalence and the image of $\partial + \ov{\partial}$. Note that there is a well-defined arithmetic degree map $\ArChow_0(\mathscr{X}) \to \bbR$ by the existence of a proper pushforward map, see \cite[Paragraph 2.2.2]{gillet_soule_arithmetic_rr}.

Let $\ov{L}$ be a hermitian line bundle on $\mathscr{X}$. Then, we can describe the action of the first Chern class $\ac_1(\ov{L}): \ArChow_k(\mathscr{X}) \to \ArChow_{k-1}(\mathscr{X})$ explicitly. Let $[(Z,g_Z)] \in \ArChow_k(\mathscr{X})$ and let $s$ be a meromorphic section of $\ov{L}$ on $Z$ and denote by $\omega_{\ov{L}}$ the curvature form of $\ov{L}$. Then, 
\[
    \ac_1(\ov{L})[(Z,g_Z)] = [(\div(s), -\log|s|\delta_{Z(\bbC)} + \omega_{\ov{L}} g_Z)].
\]
This is independent of the choice of section and additive in the tensor product of hermitian line bundles. If $\ov{M}$ is a further hermitian line bundle, the Chern classes commute, i.e.\ $\ac_1(\ov{L})\ac_1(\ov{M})=\ac_1(\ov{M})\ac_1(\ov{L})$ as operations on Chow groups. In particular, there is a multilinear intersection pairing
\begin{align*}
    \aPic(\mathscr{X})^k \times \ArChow_k(\mathscr{X}) &\to \bbR\\
    (\ov{\mathscr{L}}_1,\dots,\ov{\mathscr{L}}_k,[(Z,g_Z)]) &\mapsto \adeg \left(\ac_1(\ov{\mathscr{L}}_1)\cdots \ac_1(\ov{\mathscr{L}}_k)[(Z,g_Z)]\right)
\end{align*}
which is symmetric in the first $k$ entries.

We now define a preliminary arithmetic analogue of operational Chow groups defined by Fulton and MacPherson, see \cite[Chapter 17]{Fulton_Intersection_Theory}. We refrain from considering general bivariant cohomology groups as we are primarily interested in cohomology. This is a preliminary definition since one may for different applications need to require further compatibility relations for instance for proper morphisms that are not generically smooth. However, there is as of now no definition of proper pushforward of arithmetic Chow groups for morphisms that are not generically smooth.

\begin{definition}
    Let $\mathscr{X}$ be a projective scheme over $\sS$ with smooth generic fibre. We will work in the category of projective schemes over $\sS$ with smooth generic fibre.
    
    An element of the $k$-th arithmetic Chow cohomology $A^k(\mathscr{X})$ is defined as a collection of maps $\phi_{\mathscr{Y}}\ArChow_*(\mathscr{Y}) \to \ArChow_{*-k}(\mathscr{Y})$ indexed by maps $\mathscr{Y}\to\mathscr{X}$ (of projective schemes over $\sS$) compatible with proper pushforward, flat pullback and intersection products.
    \begin{enumerate}
        \item For any $h:\mathscr{Y}' \to \mathscr{Y}$ proper map such that $h_K$ is smooth, we have $\phi_{\mathscr{Y}}\circ h_* = h_*\circ\phi_{\mathscr{Y}'}$. For the fundamental class $[\mathscr{Y}']$, this holds even if $h_K$ is not smooth.
        \item For any flat or lci map $h:\mathscr{Y}' \to \mathscr{Y}$, we have $\phi_{\mathscr{Y}'}\circ h^* = h^*\circ\phi_{\mathscr{Y}}$.
    \end{enumerate}
    Here, $\ArChow_*(\mathscr{X})$ denotes the arithmetic Chow group when $\mathscr{X}$ is flat over $\sS$ and the usual Chow group if it maps to a closed point.
\end{definition}

\begin{example}
    The Chern character of a hermitian vector bundle on $\mathscr{X}$ induces elements in the operational Chow groups $A^*(\mathscr{X})$ by \cite[Theorem 4]{gillet_soule_arithmetic_rr}. Furthermore, if $\mathscr{X}$ is regular, there is a map $\ArChow^*(\mathscr{X}) \to A^*(\mathscr{X})$ by \cite[Theorem 3]{gillet_soule_arithmetic_rr}. One can obtain further elements in operational Chow groups by pullback.
\end{example}

\begin{proposition}\label{prop:intersect_integrable_line_bundles_with_cycles}
    The above intersection pairing extends uniquely to a pairing
    \begin{align*}
        \intPic(X)^k \times A^{d-k}(\mathscr{X}) &\to \bbR\\
        (\ov{L}_1,\dots,\ov{L}_k,\phi) &\mapsto \adeg \left(\ov{L}_1\cdots\ov{L}_k\cdot\phi\right)
    \end{align*}
    allowing any integrable line bundle.
\end{proposition}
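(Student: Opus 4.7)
I would define the extended pairing by a continuity/approximation argument. By multilinearity it suffices to handle semipositive adelic line bundles, so assume each $\ov{L}_i$ is semipositive. By definition, each $\ov{L}_i$ is a uniform limit of a sequence of hermitian line bundles $\ov{\sL}_i^{(n)}$ supported on flat projective models $\mathscr{X}_n \to \mathscr{X}$ with the same generic fibre $X$, where the sequence is stationary outside a finite set of places and converges uniformly at the remaining places. Passing to common refinements (two models are always dominated by a third), I may arrange that for each $n$ all the $\ov{\sL}_i^{(n)}$ live on a common model $h_n\colon \mathscr{X}_n \to \mathscr{X}$.

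The definition would then be
\[
    \adeg\bigl(\ov{L}_1\cdots\ov{L}_k\cdot\phi\bigr) \defeq \lim_{n\to\infty}\adeg\Bigl(\ac_1(\ov{\sL}_1^{(n)})\cdots\ac_1(\ov{\sL}_k^{(n)})\cdot\phi_{\mathscr{X}_n}([\mathscr{X}_n])\Bigr).
\]
Existence of the limit and its independence from the auxiliary data hinge on two ingredients. First, given two models $\mathscr{X}_n, \mathscr{X}_{n'}$ dominated by a further model $\mathscr{X}''$ with hermitian line bundles pulled back from each, one compares the two intersection numbers by pulling everything back to $\mathscr{X}''$, applying the projection formula on $\ArChow_*$, and invoking the compatibility identity $h_*\phi_{\mathscr{X}''}([\mathscr{X}''])=\phi_{\mathscr{X}_n}([\mathscr{X}_n])$ (axiom (1) applied to the proper map $h\colon\mathscr{X}''\to\mathscr{X}_n$, whose generic fibre is an isomorphism, hence smooth). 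Second, for two $\epsilon$-close approximating sequences on the same model, the difference $\ac_1(\ov{\sL}_i^{(n)})-\ac_1(\ov{\sL}_i^{(n')})$ is represented by a class of the shape $[(0,(f_v\cdot\delta_v)_v)]$ with $\|f_v\|_\infty=O(\epsilon)$, and applying $\phi$ to such a class together with Chern classes of a bounded family of auxiliary line bundles yields a contribution of size $O(\epsilon)$; iteration gives a Cauchy estimate for the sequence.

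Multilinearity and the fact that this extends the original pairing on $\aPic(\mathscr{X})$ are immediate from the construction. Uniqueness of the extension is automatic, since any pairing agreeing with the original one on hermitian line bundles and continuous under uniform approximation of metrics at each place is forced to be given by the displayed limit.

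\textbf{Main obstacle.} The hardest step is the continuity estimate at archimedean places. There, the intersection with $\phi$ involves $\phi$ acting on Green currents rather than on cycle-theoretic data, and one must show that $\phi$ applied to a class of the form $(0, g\,\omega_1\wedge\cdots\wedge\omega_{k-1})$ with $g$ smooth and $\|g\|_\infty=O(\epsilon)$ produces a pairing of size $O(\epsilon)$, uniformly in the approximating sequence. A secondary subtlety is that the birational models $\mathscr{X}_n\to\mathscr{X}$ need not be flat, so $\phi$ cannot be pulled back along them directly; this is circumvented by always applying $\phi$ to the fundamental class of the model and using the proper pushforward axiom for fundamental classes, which is precisely why axiom (1) was formulated to include this special case.
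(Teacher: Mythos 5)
Your plan matches the paper's overall architecture: extend the pairing to hermitian $\bbQ$-line bundles on higher models via the pushforward axiom applied to fundamental classes (so that $\phi_{\mathscr{X}'}([\mathscr{X}'])$ pushes forward correctly regardless of whether $\mathscr{X}'\to\mathscr{X}$ is flat), then approximate integrable metrics by model/smooth ones and pass to the limit. Uniqueness is indeed forced by continuity. But the proposal has a genuine gap at precisely the point you label the "main obstacle": you \emph{assert} that applying $\phi$ to a class of the form $(0,\,g\,\omega_1\wedge\cdots\wedge\omega_{k-1})$ with $\|g\|_\infty=O(\epsilon)$ yields a contribution of size $O(\epsilon)$, uniformly in the approximating data, but you do not prove it — and this is the entire content of the proposition.

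The paper supplies the missing positivity arguments. At an archimedean place, taking a representative $[(Z,g_Z)]$ of $\phi_{\mathscr{X}'}([\mathscr{X}'])$, the curvature form $\omega(g_Z)$ is locally a difference of strongly positive $(p,p)$-forms (citing Demailly), and a partition of unity gives a global decomposition $\omega(g_Z)=\omega^+-\omega^-$ into strongly positive forms. Wedging with the positive curvature forms $\omega_2,\dots,\omega_k$ of the relatively nef auxiliary bundles produces positive measures whose total masses $K$ are controlled by cohomology classes alone, hence independent of the metric; the resulting bound is $|\,\cdot\,|\le C\cdot K$ with $C=\|g\|_\infty$. At a finite place the current-theoretic argument is unavailable; instead one factors the "small divisor" $\mathscr{D}$ through the special fibre $i:\mathscr{X}'_v\hookrightarrow\mathscr{X}'$, reduces the computation to $i^*\ac_1(\ov{\mathscr{L}}_2)\cdots i^*\ac_1(\ov{\mathscr{L}}_k)\,i^*\phi\cap\mathscr{D}$ via compatibility with lci pullback, and then writes $i^*\phi$ as a difference of nef dual cycle classes on each component — possible by the full-dimensionality of the nef cone of dual cycle classes (Fulger--Lehmann). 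Nefness is preserved under the remaining pullbacks and intersections, giving the needed sign and mass control. Without these two decompositions (strongly positive forms archimedean, nef dual cycles non-archimedean) the limit cannot be shown to exist or to be independent of the approximating sequence, so your argument, while structurally sound, is incomplete precisely where the work is.
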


\begin{proof}
    Suppose first that $\pi:\mathscr{X}'\to\mathscr{X}$ is a further projective model dominating $\mathscr{X}$. Then, we first extend the pairing to $\aPic(\mathscr{X}')$ by setting
    \begin{align*}
    \aPic(\mathscr{X}')^k \times A^{d-k}(\mathscr{X}) &\to \bbR\\
    (\ov{\mathscr{L}}_1,\dots,\ov{\mathscr{L}}_k,\phi) &\mapsto \adeg \left(\ac_1(\ov{\mathscr{L}}_1)\cdots \ac_1(\ov{\mathscr{L}}_k)\phi_{\mathscr{X}'}([\mathscr{X}'])\right).
    \end{align*}

    The definition does not depend on the choice of $\mathscr{X}'$ by the compatibility of $\phi$ with pushforward maps. We are left to show that one can extend the intersection number to integrable line bundles.

    For this assume that $L,L_2,\dots,L_k$ are line bundles on $X$ and $\phi \in A^{d-k}(\mathscr{X})$ is an arithmetic cycle. Suppose that $\ov{\mathscr{L}},\ov{\mathscr{L}}_2,\dots,\ov{\mathscr{L}}_{k}$ are hermitian $\bbQ$-line bundles on $\mathscr{X}'$ whose generic fibres are $L,L_2,\dots,L_k$. Suppose $\ov{\mathscr{L}}_2,\dots,\ov{\mathscr{L}}_{k}$ are relatively nef at a place $v$ and $\ov{\mathscr{L}}$ is of the form $\O(\mathscr{D})$ for an effective Cartier divisor supported on the special fibre at $v$ whose associated Green's function is bounded by $C$. Then, we need to bound
    \[
    \adeg \left(\ac_1(\ov{\mathscr{L}})\ac_1(\ov{\mathscr{L}}_2)\cdots \ac_1(\ov{\mathscr{L}}_k)\phi_{\mathscr{X}'}([\mathscr{X}'])\right)
    \]
    in terms of $C$, $L_2,\dots,L_k$ and $[(Z,g_Z)]$. In the archimedean case this is taken to be meant as $\mathscr{D}$ is given by a non-negative smooth function bounded by $C$. Let $[(Z,g_Z)]$ be a representative of $\phi_{\mathscr{X}'}([\mathscr{X}'])$.
    
    Suppose first that $v$ is archimedean. Note that the curvature form $\omega(g_Z)$ is the difference of strongly positive forms. Locally, this follows from \cite[Section III.1.4]{demailly2012complex}. We may perform a partition of unity to write $\omega(g_Z)$ as the difference of strongly positive forms $\omega^+-\omega^-$. Let $\omega_2,\dots,\omega_k$ be the curvature forms of $\ov{\mathscr{L}}_2,\dots,\ov{\mathscr{L}}_{k}$. Then, by assumption $\omega_2 \cdots \omega_k \omega^+$ and $\omega_2 \cdots \omega_k \omega^-$ are positive measures of mass $\leq K$. Then, 
    \[
    |\adeg \left(\ac_1(\ov{\mathscr{L}})\ac_1(\ov{\mathscr{L}}_2)\cdots \ac_1(\ov{\mathscr{L}}_k)\phi_{\mathscr{X}'}([\mathscr{X}'])\right)| \leq CK.
    \]
    Hence, the intersection product on semipositive line bundles is continuous with respect to supremum norm at archimedean places and hence extends to integrable metrics at archimedean places.
    
    Let us now consider the case of a finite place. Denote the inclusion of the special fibre of $\mathscr{X}'$ by $i:\mathscr{X}'_v \into \mathscr{X}'$. By assumption, $\ac_1(\phi^*\ov{\mathscr{L}})\mathscr{X}'$ is represented by a cycle $\mathscr{D}$ supported on $\mathscr{X}'_v$. It therefore suffices to compute the algebraic intersection $i^*\ac_1(\ov{\mathscr{L}}_2)\cdots i^*\ac_1(\ov{\mathscr{L}}_{k})i^*\phi\cap \mathscr{D}$ on the special fibre by the compatibility with lci morphisms. It suffices to write $i^*\phi$ as the difference of nef dual cycle classes on each irreducible component as nefness is preserved under pullback and under intersection with nef divisors. One can write any dual cycle class as the difference of two nef cycles by the full-dimensionality of the nef cone proven in \cite[Lemma 3.7]{fulger_lehmann_positivity_of_dual_cycle_classes}.
\end{proof}

Instead of taking $\gamma$ in operational cohomology one may use a subclass of $b$-cycles. This has the expected homological flavour. However, we lack understanding of positive cones in this setting.

\begin{definition}
    Let $X$ be a smooth proper variety over $\bbQ$. Let $\sM$ denote the poset of flat proper models of $X$ over $\bbZ$. Then, we define
    \[
        \ExtChow_k(X) \defeq \varprojlim_{\mathscr{X}\in \sM_X}{\ArChow_k(\mathscr{X})},
    \]
    as a cofiltered limit where the transition maps are given by proper pushforward maps. Inside of $\ExtChow_k(X)$ we define a positive cone $\ExtChow^+_k(X)$ as the cofiltered limit of positive cones in $\ArChow_k(\mathscr{X})$. Here, an element $\gamma \in \ArChow_k(\mathscr{X})$ is called positive at a place $v$ if for Cartier divisors $\mathscr{D}_1,\dots,\mathscr{D}_k$ such that $\mathscr{D}_1$ is effective supported on the fibre at $v$ and $\mathscr{D}_2,\dots,\mathscr{D}_k$ are relatively nef, the intersection product satisfies $\mathscr{D}_1\cdots\mathscr{D}_k\cdot \gamma \geq 0$. We call it positive if it is positive at all finite places. We define integrable ($b$-)cycles on $X$ by
    \[
        \ExtChow^{\textrm{int}}_k(X)=\ExtChow^+_k(X)-\ExtChow^+_k(X).
    \]
\end{definition}

\begin{proposition}\label{prop:intersect_integrable_line_bundles_with_cycles}
    The intersection pairing extends uniquely to a pairing
    \begin{align*}
        \intPic(X)^k \times \ExtChow^{\textrm{int}}(X) &\to \bbR\\
        (\ov{L}_1,\dots,\ov{L}_k,[(Z_{\mathscr{X}},g_Z)]_{\mathscr{X}}) &\mapsto \adeg \left(\ov{L}_1\cdots\ov{L}_k\cdot[(Z_{\mathscr{X}},g_Z)]_{\mathscr{X}}\right)
    \end{align*}
    allowing any integrable line bundle.
\end{proposition}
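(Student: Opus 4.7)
The plan is to mirror the proof of the preceding Proposition, replacing the operational role of $A^*(\mathscr{X})$ by the positivity built into $\ExtChow^+_*(X)$. First, for a common flat projective model $\mathscr{X}$ dominating any initial data and hermitian line bundles $\ov{\mathscr{L}}_1, \dots, \ov{\mathscr{L}}_k$ on $\mathscr{X}$, define the pairing on a compatible family $\gamma = [(Z_{\mathscr{X}'}, g_{Z_{\mathscr{X}'}})]_{\mathscr{X}'}$ by
\[
    \ov{\mathscr{L}}_1 \cdots \ov{\mathscr{L}}_k \cdot \gamma \defeq \adeg\bigl(\ac_1(\ov{\mathscr{L}}_1) \cdots \ac_1(\ov{\mathscr{L}}_k) \cdot [(Z_{\mathscr{X}}, g_{Z_{\mathscr{X}}})]\bigr).
\]
Independence of the choice of $\mathscr{X}$ follows from the projection formula for $\ac_1$ and the compatibility of the family $\gamma$ with proper pushforward along birational maps of models.

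Next, I would extend to integrable adelic line bundles by continuity. By multilinearity and the decompositions $\intPic(X) = $ semipositive $-$ semipositive and $\ExtChow^{\textrm{int}} = \ExtChow^+ - \ExtChow^+$, it suffices to treat the case in which $\gamma \in \ExtChow^+_{d-k}(X)$ and each $\ov{L}_i$ is semipositive. I would then approximate each $\ov{L}_i$ place by place, stabilising outside a finite set, by relatively nef model metrics $\ov{\mathscr{L}}_i^{(n)}$ defined on some $\mathscr{X}$.

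The crucial step is the continuity estimate: if $\ov{\mathscr{L}}_2, \dots, \ov{\mathscr{L}}_k$ are relatively nef and $\ov{\mathscr{L}} = \O(\mathscr{D})$ is either a vertical effective Cartier divisor $\mathscr{D}$ at a finite place $v$ or the trivially fibered line bundle with a nonnegative archimedean Green's function, each bounded by $C$, then
\[
    \bigl| \adeg\bigl(\ac_1(\ov{\mathscr{L}}) \ac_1(\ov{\mathscr{L}}_2) \cdots \ac_1(\ov{\mathscr{L}}_k) \cdot \gamma_{\mathscr{X}}\bigr) \bigr| \leq C\, K,
\]
with $K$ independent of $\mathscr{D}$. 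The archimedean case proceeds as in the preceding proof via Demailly's local decomposition of $\omega(g_{Z_{\mathscr{X}}})$ into a difference of strongly positive forms together with a partition of unity, since wedges with nef curvature forms yield positive measures of mass $\leq K$. At a finite place $v$, the estimate is exactly the positivity condition built into the definition of $\ExtChow^+$: bounding $\mathscr{D} \leq C \cdot \mathscr{X}_v$ and writing $\gamma = \gamma^+ - \gamma^-$ as a difference of positive $b$-cycles, each of the two intersection numbers $\adeg(\ac_1(\O(\mathscr{X}_v)) \ac_1(\ov{\mathscr{L}}_2) \cdots \ac_1(\ov{\mathscr{L}}_k) \cdot \gamma^{\pm}_{\mathscr{X}})$ is nonnegative and finite, producing the constant $K$.

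Applying this estimate to telescoping differences $\ov{\mathscr{L}}_i^{(n+1)} - \ov{\mathscr{L}}_i^{(n)}$ shows that $\adeg\bigl(\ac_1(\ov{\mathscr{L}}_1^{(n)}) \cdots \ac_1(\ov{\mathscr{L}}_k^{(n)}) \cdot \gamma_{\mathscr{X}}\bigr)$ is Cauchy, and a standard two-sequence comparison shows its limit depends only on the $\ov{L}_i$. Uniqueness is then immediate, since model metrics are dense in integrable adelic metrics in the place-wise uniform topology. I expect the main obstacle to be the non-archimedean half of this estimate: unlike the preceding proposition, we cannot invoke the Fulger--Lehmann decomposition of a dual cycle class as a difference of nef ones, since $\gamma$ is now purely homological data. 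The substitute is precisely the positivity condition hardwired into $\ExtChow^+$, which is the reason this intermediate notion is worth introducing, at the cost of the richer functoriality enjoyed by $A^*$.
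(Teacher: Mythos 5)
Your proposal is correct and fleshes out exactly what the paper's terse two-sentence proof leaves implicit. The paper simply says the intersection number is first defined for model metrics (well-posed by the projection formula) and then that "the definition of $\ExtChow^{\textrm{int}}(X)$ is chosen precisely such that the approximation statement can be deduced as before"; you correctly identify that this means the positivity condition hardwired into $\ExtChow^+$ — nonnegativity of $\mathscr{D}_1\cdots\mathscr{D}_k\cdot\gamma$ for $\mathscr{D}_1$ effective vertical and $\mathscr{D}_2,\dots,\mathscr{D}_k$ relatively nef — directly supplies the non-archimedean bound $0 \leq \mathscr{D}\cdot\mathscr{D}_2\cdots\mathscr{D}_k\cdot\gamma^{\pm}\leq C\,\mathscr{X}_v\cdot\mathscr{D}_2\cdots\mathscr{D}_k\cdot\gamma^{\pm}$ that the Fulger--Lehmann decomposition of $i^*\phi$ provided in the operational case. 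One small point worth flagging but not fatal: writing $\mathscr{D}\leq C\mathscr{X}_v$ with both sides (and their difference) Cartier and effective should be spelled out with the same care as one would for the analogous step in the preceding proposition's proof, but this is not a gap specific to your argument.
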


\begin{proof}
    We assume first that all line bundles are endowed with model metrics on some model $\mathscr{X}$. We may compute the intersection number on this model since it does not depend on the model by the projection formula.

    The definition of $\ExtChow^{\textrm{int}}(X)$ is chosen precisely such that the approximation statement can be deduced as before. 
\end{proof}

\begin{remark}
    In fact, an element in $A(\mathscr{X})$ gives rise to an element of $\ExtChow(X)$. The fact that this lies in $\ExtChow^{\textrm{int}}(X)$ is the content of Proposition \ref{prop:intersect_integrable_line_bundles_with_cycles}. There are no pull back maps for $\ExtChow$ in general. Still, the intersection number $\arho(\ov{\Delta})^{t+i}\pi^*\gamma$ in Theorem \ref{thm:intersection_formula} can be defined for $\gamma \in \ExtChow(B)$ since it suffices to compute it on models of the toric bundle that are flat over a model of the base. The proof applies without modification.
\end{remark}

\begin{example}
    Let $Z \subset X$ be a cycle and $g_Z$ be a Green current for $Z$. Then the family of cycles $[(\ov{Z}_{\mathscr{X}},g_Z)]_{\mathscr{X}}$, where $\ov{Z}_{\mathscr{X}}$ denotes the closure of $Z$ inside $\mathscr{X}$, defines an element in $\ExtChow^+_k(X)$. This follows from comparing the intersection number with the height restricted to $Z$, cf.\ \cite[Section 4.5]{soule_arithmetic_intersection_in_book}. 
\end{example}

\section{\label{sec:toric_bundles}Toric bundles}

Analogous to the toric compactification of $\bbT$-torsors in algebraic geometry we would like such a compactification in the arithmetic setting. Despite the lack of a total space, we can apply Arakelov geometry to imitate the algebro-geometric construction where toric Cartier divisors induce line bundles on the total space, cf.\ \cite[Construction 2.3.5]{chambert-loir_tschinkel_arithmetic_torsors}. All torsors will be torsors in the Zariski topology. 

\subsection{Categories of torus bundles}
We can use characters to understand torus bundles more closely. More precisely, let $\catPic(\B)$ be the category of $\bbG_m$-torsors on $\B$ with morphisms given by isomorphisms. It naturally has the structure of a symmetric monoidal category given by the tensor product. Given a $\T$-torsor, every character $\T \to \bbG_m$ gives rise to an element in $\catPic(\B)$. We define the symmetric monoidal category $\sM$ of characters by setting its elements to be $M = \Hom(\T,\bbG_m)$ with no non-trivial morphisms and monoidal structure given by addition. We will call a category associated to finitely generated free abelian groups in this way \emph{lattice category}. Let $\Bun_\T(\B)$ denote the symmetric monoidal category of $\T$-torsors over $\B$\footnote{This notation is reminiscent of the $\B$-valued points of the stack $\Bun_G$ studied among others in the context of the geometric Langlands program as in \cite{beilinson_drinfeld_quantization_hitchin_integrable_system}. While they consider the stack $\Bun_G(X)$ for a curve $X$, we consider the stack $\Bun_G(*)$.}. There is a monoidal equivalence of categories
    \[
    \Bun_\T(\B) \to \Fun^{\otimes}(\sM, \catPic(\B)).
    \]
Here $\Fun^{\otimes}(\sM, \catPic(\B))$ denotes the category of monoidal functors with the monoidal structure given by the tensor product on $\catPic(\B)$ and monoidal natural transformations as morphisms. This equivalence identifies $\T$-torsors with linear maps $M\to \Pic(B)$. This expresses that the category of $\T$-torsors is in equivalence to collections of line bundles $(\Tbun(m))_{m\in M}$ on $B$ indexed by $M$ with compatible identifications $\Tbun(m_1 + m_2) \cong \Tbun(m_1) \otimes \Tbun(m_2)$. The morphisms between such maps are collections of isomorphisms of $\bbG_m$-bundles indexed over $M$ compatible with the tensor product.

We redefine torus bundles to be monoidal functors from a lattice category $\sM$ to $\catPic(\B)$. Over a field $\K$ endowed with an absolute value, we let $\mcatPic(\B)$ denote the category of metrized line bundles. Over a global field, we let $\acatPic(\B)$ denote the category of adelically metrized line bundles.

\begin{definition}
    Let $\B$ be a variety over a field $\K$ endowed with an absolute value. We define a \emph{metrized torus bundle} to be a symmetric monoidal functor $\mTbun: \sM \to \mcatPic(\B)$ from a lattice category $\sM$. If $\K$ is a global field we define an adelic torus bundle as a symmetric monoidal functor $\aTbun: \sM \to \acatPic(\B)$. 
    
    We will extend the meaning of qualifiers from the case of line bundles to torus bundles. We call a metrized/adelic torus bundle integrable if its image consists of integrable line bundles. We call it flat if its image consists of flat line bundles. Recall that a line bundle is flat if both it and its dual are semipositive. We call a torus bundle algebraic if the line bundles in the image are algebraic, i.e.\ the metric is induced by a $\bbQ$-line bundle on a model of $\B$. We say a torus bundle has model metrics if the line bundles in the image have model metrics, i.e.\ the metric is induced by a line bundle on a model of $\B$.
\end{definition}

Torus bundles whose image consists of semipositive line bundles are automatically flat. We can identify isomorphism classes of $\T$-torsors with linear maps $M\to \Pic(B)$. The same principle holds for metrized $\T$-bundles.

\begin{remark}
    We will use adelic notation such as $\aTbun$ as stand-in for the local equivalent $\mTbun$. This does not lead to confusion since all local considerations easily globalize. We will use the word metrized, both in the local context as well as for adelically metrized.
\end{remark}

\begin{definition}
    Let $\sT_1$ and $\sT_2$ be torus bundles on $\B$ for split tori $\T_1$ and $\T_2$ respectively. Let $\phi:\T_1 \to \T_2$ be a group homomorphism. A morphism $f:\Tbun_1 \to \Tbun_2$ over $B$ is $\phi$-equivariant if the diagram
    \begin{center}
    \begin{tikzpicture}[commutative diagram/.style={matrix of math nodes, row sep=1cm, column sep=1cm}]
    \matrix[commutative diagram] (m) {
        \T_1 \times \sT_1 & \T_2 \times \sT_2 \\
        \sT_1 & \sT_2 \\
    };
    \draw[->] (m-1-1) -- node[above] {$\phi \times f$} (m-1-2);
    \draw[->] (m-1-1) -- node[left] {$\mu$} (m-2-1);
    \draw[->] (m-1-2) -- node[right] {$\mu$} (m-2-2);
    \draw[->] (m-2-1) -- node[below] {$f$} (m-2-2);
    \end{tikzpicture}
    \end{center}
    commutes. We say that $f$ is equivariant if it is $\phi$-equivariant for some $\phi$.
\end{definition}

We can now define categories of torus bundles from the functor perspective.

\begin{definition}\label{defi:torus_bundle}
    We define categories of torus bundles over a base $\B$ with the additional flexibility of varying the torus. Morphisms from a torus bundle $\Tbun: \sM \to \catPic(\B)$ to $\Tbun': \sM' \to \catPic(\B)$ are given by a morphism of tori encoded by a monoidal functor $\alpha:\sM'\to\sM$ and an equivariant morphism between the torus bundles encoded by a monoidal natural transformation $F:\Tbun\circ \alpha\to\Tbun'$. This defines a category $\catTbun(B)$. Applying the same approach to the metrized setting yields categories $\mcatTbun(B)$ and $\acatTbun(B)$.
\end{definition}

\begin{proposition}\label{prop:equivariant_morphisms}
    For fixed $\alpha:\sM' \to \sM$, the set of equivariant morphisms is in bijection to the sections of $(\Tbun\circ\alpha)\otimes (\Tbun')^\vee$.
\end{proposition}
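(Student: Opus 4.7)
The plan is to unpack both sides as data of monoidal natural transformations and match them. By Definition \ref{defi:torus_bundle}, with $\alpha:\sM' \to \sM$ fixed, an equivariant morphism $\Tbun \to \Tbun'$ is by definition a monoidal natural transformation $F: \Tbun\circ\alpha \to \Tbun'$, i.e.\ a coherent family of isomorphisms (isometries in the metrized or adelic setting) $F_{m'}: \Tbun(\alpha(m')) \iso \Tbun'(m')$ indexed by $m' \in \sM'$ and compatible with the tensor product and the unit.

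On the other side, the tensor product $(\Tbun\circ\alpha) \otimes (\Tbun')^\vee$ is again a monoidal functor $\sM' \to \acatPic(\B)$, and a section of this torus bundle is, in the same language, a monoidal natural transformation from the trivial torus bundle $\one$ (sending every $m'$ to $\sO_\B$ with its trivial metric) to it. Unwinding, such a section assigns to each $m' \in \sM'$ an isometric trivialization $s_{m'}: \sO_\B \iso \Tbun(\alpha(m')) \otimes \Tbun'(m')^\vee$, compatible with tensor products and the unit.

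First I would record the standard pointwise identification: trivializations of the metrized line bundle $\Tbun(\alpha(m'))\otimes \Tbun'(m')^\vee$ correspond to isomorphisms $\Tbun'(m') \iso \Tbun(\alpha(m'))$, whose inverses are the desired $F_{m'}$. Because the tensor-product metric on $\acatPic(\B)$ is built precisely so that the canonical identification of $\Tbun(\alpha(m'))\otimes \Tbun'(m')^\vee$ with $\HHom(\Tbun'(m'), \Tbun(\alpha(m')))$ is an isometry, isometric trivializations on one side match isometric isomorphisms on the other, with no loss at any place. The main step is then to translate the monoidal compatibility of $\{s_{m'}\}$ into the tensor-product compatibility of $\{F_{m'}\}$; this is a direct diagram chase once one expands the definition of the tensor product of monoidal functors. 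In the adelic case, the condition of agreeing with the canonical metric at almost all places is preserved by this pointwise identification, so the bijection passes to $\acatTbun(\B)$ without any further input.
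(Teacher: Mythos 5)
The paper does not actually include a proof of this proposition; it is stated without argument and treated as a direct consequence of the definitions. Your proof correctly spells out that unwinding: with $\alpha$ fixed, an equivariant morphism is by Definition~\ref{defi:torus_bundle} a monoidal natural transformation $F\colon \Tbun\circ\alpha\to\Tbun'$, i.e.\ a tensor-compatible family of isomorphisms $F_{m'}\colon \Tbun(\alpha(m'))\iso \Tbun'(m')$ (naturality is vacuous as a lattice category has only identity arrows), and a section of $(\Tbun\circ\alpha)\otimes(\Tbun')^\vee$ is a tensor-compatible family of trivializations of $\Tbun(\alpha(m'))\otimes\Tbun'(m')^\vee$, which is the same data via $\HHom$. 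Your handling of the metrized/adelic variant by insisting on isometric trivializations is consistent with the paragraph following the proposition in the paper, where the author derives the norm function on $\B^{\an}$ from this observation and singles out the morphisms of constant norm~$1$. The argument is correct and no gap is present.
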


The above observation shows that if $\Tbun$ and $\Tbun'$ are metrized and $\Tbun \to \Tbun'$ is a morphism of the underlying torus bundles, there is a function on $\B^{\an}$ giving the norm of the morphism. Only morphisms of constant norm $1$ are considered to be morphisms of metrized torus bundles.

\begin{example}
    For any (metrized) torus bundle $\Tbun$ and integer $n$, there is an $n$-th power map $\Tbun \to \Tbun^{\otimes n}$, where $\Tbun^{\otimes n}$ is defined by $\Tbun^{\otimes n}(m) = \Tbun(m)^{\otimes n}$. It is defined by the map $n:\sM \to \sM$ and the collection of isomorphisms $\Tbun(nm)\to\Tbun(m)^{\otimes n}$ that is inherent in the definition of the monoidal functor $\Tbun$. More generally if $A:M'\to M$ we define the torus bundle $\Tbun^{\otimes A}$ by $\Tbun^{\otimes A}(m') = \Tbun(A \cdot m')$
\end{example}

We would now like to have a notion of morphism of torus bundles that allows for a change in base scheme. Let $f: \B' \to \B$ be a morphism of schemes and $\Tbun$ be a (possibly metrized) torus bundle over $\B$. Then, we may define the pullback $f^*\Tbun = f^* \circ \Tbun$ as the composition of $\Tbun$ and the pullback functor on line bundles, cf.\ \cite[Proposition 1.2.4]{chambert-loir_tschinkel_arithmetic_torsors}.

\begin{definition}
    A morphism of torus bundles $f=(f_t,f_b):(\Tbun_1\to\B_1) \to (\Tbun_2\to\B_2)$ is the datum of a map $f_b:\B_1 \to \B_2$ and an equivariant map $f_t:\Tbun_1 \to f_b^* \Tbun_2$. This defines categories $\catTbun$, $\mcatTbun$ and $\acatTbun$.
\end{definition}

\begin{example}\label{eg:semiabelian}
    This setup provides a suitable setting for the proof of the Weil-Barsotti formula. Let $G$ be a semiabelian variety with a split torus part $\T$ and abelian quotient $A$ with quotient map $\pi$. The addition on $G$ has to be a morphism of toric bundles $f:G\times G \to G$ with underlying map of base spaces $f_b=m_A:A \times A \to A$, the addition on $A$, and map on tori $m_\T: \T \times \T \to \T$. Let $\Tbun$ be a torus bundle over $A$. Then, an equivariant morphism $\Tbun\times\Tbun \to \Tbun$ of the form exists if and only if $m^* \Tbun \cong \Tbun \boxtimes \Tbun$. This is precisely the case when the image of $\Tbun$ consists of algebraically trivial line bundles. The map thus defined is unique, once a rigidification $\T \cong \pi^{-1}(e)$ is chosen. One can easily check that this morphism yields a group structure.
\end{example}

\begin{definition}
    A \emph{dynamical torus bundle} is a tuple $(\B,f,\Tbun,f^*\Tbun \cong \Tbun^{\otimes r})$ consisting of a proper base variety $\B$, an endomorphism $f:\B\to\B$, a torus bundle $\Tbun$ and an isomorphism $f^*\Tbun \cong \Tbun^{\otimes r}$ for some $r>1$. This data gives rise to canonical metrics on $\Tbun$. Over a global field it gives rise to an adelic torus bundle. Its isomorphism class does not depend on the chosen isomorphism.
\end{definition}

\begin{remark}
    One could more generally define a dynamical torus bundle to be a tuple $(\B,f,\Tbun,f^*\Tbun \cong \Tbun^{\otimes A})$ consisting of a proper base variety $\B$, an endomorphism $f:\B\to\B$, a torus bundle $\Tbun$ and an isomorphism $f^*\Tbun \cong \Tbun\circ A$ for some endomorphism $A:M\to M$ which is diagonalizable over $\bbR$ and whose eigenvalues are real numbers $>1$.
\end{remark}

\subsection{Toric bundles and their morphisms}
\begin{definition}
    A \emph{toric bundle} over a scheme $\B$ is a scheme $\toricbun$ over $\B$ with the action of a torus $\T$ such that, Zariski locally on $U \subseteq \B$, $\toricbun$ is $\T$-equivariantly of the form $\toricvar_\Sigma\times U$ for a fixed fan $\Sigma$ in $N_\bbR$.
    
    There is an underlying $\T$-torsor $\Tbun$ such that $\toricbun$ is of the form $\toricbun_\Sigma = (\Tbun \times \toricvar_\Sigma)/\T$, where $x \in \T$ acts by $(x,x^{-1})$. The quotient $(\sT \times \toricvar_\Sigma)/\T$ can be defined using Zariski descent, c.f.\ \cite[Construction 2.1.2]{chambert-loir_tschinkel_arithmetic_torsors}. We will call $\Sigma$ the underlying fan of $\toricbun$.
\end{definition}

\begin{remark}
    It is useful to consider less restrictive definitions in other contexts. For instance the only model of a toric variety over a DVR that is a toric bundle is the canonical model.
\end{remark}

We define a class of maps of toric bundles respecting the toric structure. We first give a definition for a fixed base variety $\B$ inspired by the case of toric varieties.

\begin{definition}
    Let $\toricbun_1$ and $\toricbun_2$ be toric bundles on $\B$ for split tori $\T_1$ and $\T_2$ respectively. Let $\phi:\T_1 \to \T_2$ be a group homomorphism. A morphism $f:\sT_1 \to \sT_2$ over $B$ is $\phi$-equivariant if the diagram
    \begin{center}
    \begin{tikzpicture}[commutative diagram/.style={matrix of math nodes, row sep=1cm, column sep=1cm}]
    \matrix[commutative diagram] (m) {
        \T_1 \times \toricbun_1 & \T_2 \times \toricbun_2 \\
        \toricbun_1 & \toricbun_2 \\
    };
    \draw[->] (m-1-1) -- node[above] {$\phi \times f$} (m-1-2);
    \draw[->] (m-1-1) -- node[left] {$\mu$} (m-2-1);
    \draw[->] (m-1-2) -- node[right] {$\mu$} (m-2-2);
    \draw[->] (m-2-1) -- node[below] {$f$} (m-2-2);
    \end{tikzpicture}
    \end{center}
    commutes. We say that $f$ is equivariant if it is $\phi$-equivariant for some $\phi$. It will be called \emph{non-degenerate} if its fiberwise image intersects the open dense torus on all fibers and \emph{degenerate} otherwise.
\end{definition}

\begin{example}
    Let $\toricvar_\Sigma$ be a toric variety. The $\T$-orbits on $\toricvar_\Sigma$ are in bijection to the cones of $\Sigma$. The closure $V(\sigma)$ of a $\T$-orbit corresponding to a cone $\sigma\in\Sigma$ is itself naturally a toric variety though for a quotient $\T'$ of the original torus $\T$. Its character lattice is identified with $M\cap \sigma^{\perp}$. The fan defining $V(\sigma)$ is denoted $\Sigma(\sigma)$. Given a toric bundle of the form $\toricbun_\Sigma$, there is a closed subvariety $\calV(\sigma)$ given by $(\Tbun \times \calV(\sigma))/\T$. This is again a toric bundle. The isomorphism class of its underlying torus bundle is given by $M\cap \sigma^{\perp} \to M \to \Pic(\B)$.

    A morphism of this form is called a closed embedding of toric bundles.
\end{example}

Let us classify non-degenerate equivariant morphisms of toric bundles using combinatorial data. We can apply the theory of torus bundles since a morphism of toric bundles $\toricbun_1 \to \toricbun_2$ restricts to a map of the underlying torus bundle $\Tbun_1 \to \Tbun_2$. Note that a toric bundle is defined by the data of a torus bundle together with a fan $\Sigma$ on $N_\bbR$.

\begin{proposition}
\label{prop:equivariant_morphisms}
    Let $\toricbun_1$ and $\toricbun_2$ be toric bundles defined by the data of underlying torus bundles $\Tbun_i:\sM_i \to \catPic(\B)$ and fans $\Sigma_i$. Let $\phi:\T_1 \to \T_2$ be a group homomorphism such that for every cone $\sigma_1 \in \Sigma_1$ there exists a cone $\sigma_2 \in \Sigma_2$ such that $H(\phi)(\sigma_1) \subseteq \sigma_2$. Then the non-degenerate $\phi$-equivariant morphisms from $\toricbun_1$ to $\toricbun_2$ are in bijection with the $\phi$-equivariant maps of torus bundles $\Tbun_1 \to \Tbun_2$. If the condition on the fan is not satisfied there are no $\phi$-equivariant morphisms.
    
    In particular, $\phi$-equivariant maps are in bijection to sections of the $\T_2$-torsor defined by
    \[
    \Tbun_2^\vee \otimes (\Tbun_1 \circ H^\vee(\phi)):\sM_2 \to \catPic(\B).
    \]
\end{proposition}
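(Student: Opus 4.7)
The approach is to reduce the statement to the classical classification of equivariant morphisms between toric varieties, exploiting the fact that a toric bundle is Zariski-locally a product with a fixed toric variety.

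First I would produce the forward map of the claimed bijection. Given a non-degenerate $\phi$-equivariant morphism $f \colon \toricbun_1 \to \toricbun_2$, non-degeneracy means that $f$ sends the open dense torus bundle $\Tbun_1 \subset \toricbun_1$ into $\Tbun_2 \subset \toricbun_2$, so restriction yields a $\phi$-equivariant morphism of torus bundles.

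For the inverse, let $g \colon \Tbun_1 \to \Tbun_2$ be a $\phi$-equivariant morphism of torus bundles, and choose a Zariski cover $\{U_\alpha\}$ of $\B$ trivialising both torus bundles simultaneously. On each $U_\alpha$ the restriction $g|_{U_\alpha}$ becomes a $\phi$-equivariant map $\T_1 \times U_\alpha \to \T_2 \times U_\alpha$ over $U_\alpha$, which equivariance forces to have the form $(t,u) \mapsto (\phi(t)\cdot s_\alpha(u),u)$ for some section $s_\alpha \in \T_2(U_\alpha)$. By the classical theory of toric morphisms (see e.g.\ \cite{Cox_Little_Schneck_Toric}), the homomorphism induced by $\phi$ on the tori extends to a morphism $X_{\Sigma_1} \to X_{\Sigma_2}$ precisely when every cone of $\Sigma_1$ maps into some cone of $\Sigma_2$, which is exactly the assumption; translation by the torus element $s_\alpha$ extends to $X_{\Sigma_2}$ via the canonical torus action. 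Composing these two ingredients produces an extension of $g|_{U_\alpha}$ to a morphism $\toricbun_1|_{U_\alpha} \to \toricbun_2|_{U_\alpha}$.

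The local extensions automatically agree on overlaps because both restrict to $g$ on the dense open torus bundle, and morphisms to the separated scheme $\toricbun_2$ are determined by their restriction to a dense open subscheme. Gluing thus yields the desired global morphism $f \colon \toricbun_1 \to \toricbun_2$, inverse to restriction. If the fan compatibility condition fails, then already on a single trivialising open the classical result rules out any $\phi$-equivariant extension of $\phi$ itself, so no morphism of toric bundles can exist either. Finally, the identification with sections of $\Tbun_2^\vee \otimes (\Tbun_1 \circ H^\vee(\phi))$ follows directly from the preceding proposition classifying equivariant morphisms of torus bundles, once we have identified equivariant morphisms of toric bundles with those of the underlying torus bundles.

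The main subtlety I anticipate is the gluing step, but since the target is separated and the torus bundle is schematically dense in the toric bundle, uniqueness of the extension on overlaps is automatic; the bulk of the argument is really the Zariski-local application of the classical equivariant classification.
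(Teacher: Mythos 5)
Your proof is correct and follows essentially the same approach as the paper: the paper's own proof simply states that the result is a combination of the preceding Proposition on equivariant morphisms of torus bundles and the classical classification of equivariant toric morphisms (\cite[Theorem 3.2.4]{toricheights}), which is exactly the content you unwind by restricting to the dense torus bundle, applying the classical criterion on a trivialising cover, and gluing via separatedness of the target and schematic density of the torus bundle.
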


\begin{proof}
    This is a combination of Proposition \ref{prop:equivariant_morphisms} and \cite[Theorem 3.2.4]{toricheights}.
\end{proof}

\begin{example}
    Degenerate equivariant morphisms are not as easy to describe.  For instance, there is no description of equivariant morphisms as the composition of non-degenerate morphisms and the inclusion of a closed orbit as in the non-relative setting.
    
    An example of a degenerate morphism, where this property fails can be given as follows. Pick as base variety a toric variety $\B$ for the torus $\T$. We consider the trivial toric bundles $\T \times \B$ and $\B \times \B$. Then, $(m,\id):\T \times \B \to \B \times \B$ defines a degenerate morphism if $\B\neq\T$. More precisely, the image of $(m,\id)$ restricted to fibre over $b\in \B$ lands in the orbit of $b$.

    Another property that is not inherited from the absolute case where $\B = *$ is that in order to specify an equivariant morphism $\toricvar_{\Sigma_1}\to\toricvar_{\Sigma_2}$ it suffices to give a map of tori $\phi:\T_1 \to \T_2$ respecting the fans and a $\phi$-equivariant map $\Tbun_1 \to \toricvar_{\Sigma_2}$. The failure of this property is illustrated by the fact that the map $(m,\id):\T \times \B \to \B \times \B$ does not extend to a map $\B \times \B \to \B \times \B$ for $\B \neq \T$.
\end{example}

The above example motivates the following definition.

\begin{definition}
    An equivariant morphism is called a \emph{morphism of toric bundles} if it is the composition of a non-degenerate equivariant morphism followed by a closed embedding of toric bundles.
\end{definition}

We would like to allow for changes of the base variety. For this note that the base change of a toric bundle is the toric bundle associated to the same fan $\Sigma$ and the pullback torus bundle. We will use also use the word pullback and related notation in the context of toric bundles.

\begin{definition}
    A morphism of toric bundles $f=(f_t,f_b):(\toricbun_1\to\B_1) \to (\toricbun_2\to\B_2)$ is the datum of a map $f_b:\B_1 \to \B_2$ and a morphism of toric bundles $f_t:\toricbun_1 \to f_b^* \toricbun_2$.
\end{definition}

\begin{example}\label{eg:nth_power_map}
    Let $\Tbun$ be a torus bundle and $\Sigma$ a fan on the co-character space of its torus. Let $\toricbun$ be the toric bundle associated to this data and denote by $\toricbun^{\otimes n}$ the toric bundle associated to $\Tbun^{\otimes n}$ and $\Sigma$. Then, the map of torus bundles $[n]:\Tbun\to\Tbun^{\otimes n}$ extends to a non-degenerate map of toric bundles $[n]:\toricbun\to\toricbun^{\otimes n}$.
\end{example}

\begin{example}
    Let us extend Example \ref{eg:semiabelian}. From Proposition \ref{prop:equivariant_morphisms} it easily follows that the multiplication on $G$ extends to actions on its compactifications as toric bundles. We note that multiplication by $n$ on semiabelian varieties can be seen as the unique toric morphism with underlying map $f_b=[n]:A\to A$ and map on tori $[n]:\T \to \T$ respecting the rigidification. Again we may apply Theorem \ref{prop:equivariant_morphisms} to obtain that it extends to any compactification as a morphism of toric bundles. We remark that homomorphisms of semiabelian varieties can be viewed as morphisms of toric bundles and that they extend to morphisms of compactifications if this is true on the toric part.
\end{example}

\subsection{Divisors on toric bundles}

Although we are mainly interested in the case of torus invariant Cartier divisors on toric bundles it does not add much complexity to consider more general $\T$-linearized sheaves. They are convenient as contrary to Cartier divisors one can always pull them back. 

\begin{definition}
    A $\T$-linearized quasicoherent sheaf $\sF$ on a toric variety $\toricvar$ consists of a quasicoherent sheaf $\sH$ on $\toricvar$ together with an isomorphism $m^* \sH \cong pr_2^* \sH$, where $m$ denotes the action $\T \times \toricvar \to \toricvar$. The category of $\T$-linearized sheaves is denoted by $\QCoh_\T(\toricvar)$.
\end{definition}

A toric Cartier divisor $D$ gives rise to a $\T$-linearized line bundle $\O(D)$ with the $\T$-linearization given by the morphism $m^*\O(D)\to pr_2^* \O(D)$ being given by sending $m^* s_D$ to $pr_2^* s_D$ where $s_D$ is the distinguished rational section. If a toric Cartier divisor additionally is endowed with a toric metric, it follows that the isomorphism $m^*\O(D)\to pr_2^* \O(D)$ is in fact an isometry over the preimage of $0\times X^{\an} \subseteq N_\bbR \times X^{\an}$ in $(\T\times X)^{\an}$.

We fix the following setup. Let $\toricvar$ be a variety with an action of $\T$ and let $\Tbun$ be a $\T$-bundle on $\B$. Let $\toricbun = (\Tbun \times \toricvar)/\T$. We call $\toricbun \to \B$ a $\T$-fibre bundles. A morphism of torus fibre bundles is a map induced by a map $\Tbun_1 \to \Tbun_2$ of torus bundles and an equivariant map of varieties $X \to Y$ with torus actions. We obtain the notion of (adelically) metrized torus fibre bundles by adding metrics to the underlying torus bundle.

\begin{proposition}
    There is a unique functor
    \[
        \rho:\QCoh_\bbT(\toricvar) \to \QCoh(\toricbun)
    \]
    compatible with tensor products satisfying the following conditions:
    \begin{enumerate}
        \item If $B = \{*\}$ and $\Tbun$ is the trivial bundle, the map is the identity.
        \item It commutes with pullback for morphisms of torus fibre bundles, i.e.\ if $\sF:\toricbun_1 \to \toricbun_2$ is a morphism with underlying morphism of varieties with torus action $f:\toricvar_1 \to \toricvar_2$, then $\sF^*\rho(\sH) = \rho(f^* \sH)$.
    \end{enumerate}
\end{proposition}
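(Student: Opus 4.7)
The plan is to construct $\rho$ by faithfully flat descent along the quotient morphism $p:\Tbun\times\toricvar \to \toricbun$, viewed as a $\T$-torsor for the twisted action $x\cdot(s,y)=(xs,x^{-1}y)$, and then to pin down uniqueness by passing to local trivializations of $\Tbun$. Since $\QCoh(\toricbun)$ is then equivalent to the category of $\T$-equivariant quasi-coherent sheaves on $\Tbun\times\toricvar$, the whole construction reduces to producing such an equivariant sheaf from a $\T$-linearized sheaf on $\toricvar$.

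For the construction, given $\sH\in\QCoh_\T(\toricvar)$ I would consider $\pi_2^*\sH$ on $\Tbun\times\toricvar$ and equip it with a $\T$-equivariant structure for the twisted action: the required isomorphism on $\T\times\Tbun\times\toricvar$ is obtained by pulling back the given linearization $m^*\sH \cong \mathrm{pr}_2^*\sH$ along $(x,s,y)\mapsto(x^{-1},y)$, and the cocycle condition on $\sH$ transfers directly (this is the step where I would be most careful, see below). Descent along $p$ then produces $\rho(\sH)\in\QCoh(\toricbun)$. Monoidality is automatic since pullback and descent both commute with tensor products.

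Next I would verify the two axioms. Axiom (1) is immediate: when $\B=\{*\}$ and $\Tbun=\T$, the map $p$ is the orbit map of the twisted action, and the descent of $\pi_2^*\sH$ returns $\sH$ itself. For axiom (2), I note that a morphism $\sF$ of $\T$-fibre bundles lifts uniquely to a $\T$-equivariant morphism $\widetilde\sF:\Tbun_1\times\toricvar_1\to\Tbun_2\times\toricvar_2$ covering $\sF$ and satisfying $\pi_2\circ\widetilde\sF=f\circ\pi_2$. Hence $\widetilde\sF^{\,*}\pi_2^*\sH = \pi_2^*f^*\sH$ as $\T$-equivariant sheaves, and descending to $\toricbun_1$ gives $\sF^*\rho(\sH)=\rho(f^*\sH)$.

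For uniqueness, I would cover $\B$ by Zariski opens $U_i$ trivializing $\Tbun$. Each trivialization $\Tbun|_{U_i}\cong \T\times U_i$ yields an identification $\toricbun|_{U_i}\cong\toricvar\times U_i$ and a morphism of $\T$-fibre bundles $\toricbun|_{U_i}\to\toricvar$ (mapping to the trivial $\T$-bundle over a point). Axioms (1) and (2) then force $\rho(\sH)|_{\toricbun|_{U_i}}=\mathrm{pr}_{\toricvar}^*\sH$. Two trivializations differ by translation by a section of $\Tbun$, so applying axiom (2) to the corresponding equivariant automorphism of $\toricvar$ identifies the gluing cocycle on overlaps with the one coming from the $\T$-linearization on $\sH$. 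This determines $\rho(\sH)$ up to canonical isomorphism and reproduces the sheaf built in the first step.

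The main obstacle I anticipate is the bookkeeping in the first step: checking that the $\T$-linearization of $\sH$ really assembles into a descent datum for the \emph{twisted} $\T$-action on $\Tbun\times\toricvar$, and that the cocycle condition survives the switch $x\mapsto x^{-1}$ and the interaction with the diagonal factor in $\Tbun$. Once this is done cleanly, both axioms and uniqueness are just unwinding of definitions.
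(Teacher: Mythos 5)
Your proof is correct and takes essentially the same approach as the paper: both constructions locally trivialize $\Tbun$ over Zariski opens of $\B$, pull back $\sH$ along the induced trivialization of $\toricbun$, and resolve the ambiguity/gluing using the $\T$-linearization. Your version simply repackages this as faithfully flat descent along $\Tbun\times\toricvar\to\toricbun$ (which is Zariski-locally trivial, so equivalent to the local gluing argument), and spells out the cocycle and uniqueness checks that the paper's terse proof leaves implicit — worth noting that the "switch $x\mapsto x^{-1}$" step you flag is harmless precisely because $\T$ is abelian.
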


\begin{proof}
    Locally on a Zariski open $U \subseteq \B$ we may choose a  trivialization of $\Tbun$. This may be seen as a collection of sections $s_m \in \Gamma(\Tbun(m))$ satisfying $s_{m_1 + m_2} = s_{m_1} \otimes s_{m_2}$. Since Cartier divisors are a local notion we may assume w.l.o.g.\ that $U = \B$. The trivialization of $\Tbun$ induces a trivialization $s:\toricbun \to \toricvar_\Sigma$. In this situation we set $\rho$ to be $s^*$. The ambiguity in this definition is resolved by applying the $\T$-linearization. The constructed map is the unique map satisfying the imposed conditions.
\end{proof}

\begin{remark}
    One may define $\rho:\Div_\bbT(\toricvar_\Sigma) \to \Div_\T(\toricbun_\Sigma)$ on the level of Cartier divisors. The divisors $D$ and $D'$ being linearly equivalent does not imply that $\rho(D)$ and $\rho(D')$ are linearly equivalent.
\end{remark}

In the metrized setting it is natural to replace $\T$ by the elements of unit norm and $\Tbun$ by the part of the torus bundle of unit norm. This is dictated to us since we want compatibility of the two constructions in the case of model metrics. We will avoid the use of quotients due to their technical difficulty in algebraic and nonarchimedean geometry.

We denote by $\Vect_\T(\toricvar)$ the category of $\T$-linearized vector bundles on $\toricvar$. We denote by $\mVect_\T(\toricvar)$ the category of metrized vector bundles with a $\T$-linearization which is an isometry over the preimage of $0\times X^{\an}\subseteq N_\bbR \times X^{\an}$. Let $\aVect_\T(\toricvar)$ be the category of $\T$-linearized adelic vector bundles, i.e.\ the category of $\T$-linearized vector bundles $V$ with metrics at all places such that the $\T$-linearization is an isometry over the preimage of $0\times X^{\an}$ and there is a dense open $U\subseteq \sS$ such that the metrics over $U$ are defined by a model $V_U$.

\begin{proposition}\label{prop:mrho}
    Let $\B$ be a variety over a valued field $\K$ and $\toricbun$ be a metrized toric bundle with fan $\Sigma$ and underlying metrized torus bundle $\mTbun$ over $\B$. Let $\toricvar_\Sigma$ be the toric variety associated to $\Sigma$. Then, there is a unique monoidal functor
    \[
        \mrho:\mVect_\bbT(\toricvar_\Sigma) \to \mVect_\bbT(\toricbun_\Sigma)
    \]
    satisfying the following conditions:
    \begin{enumerate}
        \item If $B = \{*\}$ and $\mTbun$ is the trivial bundle, the map is the identity.
        \item It commutes with pullback for morphisms of metrized torus fibre bundles, i.e.\ if $\sF:\toricbun_1 \to \toricbun_2$ is a morphism with underlying morphism of varieties with torus action $f:\toricvar_1 \to \toricvar_2$, then $\sF^*\mrho(\sH) = \mrho(f^* \sH)$.
        \item Let $\phi:\toricbun_1 \to \toricbun_2$ be a morphism of metrized torus fibre bundles. Then, there is a homomorphism $|f|:M_2 \to \Cont(\B^{\an},\bbR)$ associated to the induced map $f:\Tbun_1 \to \Tbun_2$ giving the norm of $f$. Then, it holds that $f^*\circ \mrho_{\aTbun_2(-|f|)} = \mrho_{\aTbun}\circ\phi^*$. The subscript emphasizes the dependence on the metric of the underlying torus bundle.
    \end{enumerate}
The homomorphism $\mrho$ automatically restricts to $\rho$ on underlying vector bundles.
\end{proposition}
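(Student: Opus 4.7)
The plan is to construct $\mrho$ Zariski-locally on $\B$ using trivializations of $\Tbun$, and then verify that the construction is independent of choices. For uniqueness, on a Zariski open $U \subseteq \B$ admitting a section $s : U \to \Tbun|_U$, the trivialization induces an identification $\toricbun|_U \cong \toricvar_\Sigma \times U$ of torus fibre bundles. Viewed as a morphism of metrized torus fibre bundles, $s$ has an associated norm homomorphism $|s| : M \to \Cont(U^{\an}, \bbR)$ encoding the metric data of $\mTbun|_U$ in the trivialization. Condition (3) applied to $s$, combined with condition (1) on the trivially-metrized side, then forces $\mrho(\sH)|_U$ to be the pullback of $\sH$ along the trivialization, with its natural metric twisted by an exponential factor determined by $|s|$.

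For existence, I would take the formula from the uniqueness analysis as a definition over each trivializing open, and check that the locally defined metrics glue. Under a change of trivialization $s \mapsto s \cdot u$ for $u : U \to \T$, two things happen. The identification $\toricbun|_U \cong \toricvar_\Sigma \times U$ changes by the $\T$-action of $u$; since the $\T$-linearization of $\sH$ is only required to be isometric over the preimage of $0 \in N_\bbR$ under tropicalization, pulling back the metric through the new identification introduces a correction factor governed by the tropicalization of $u$ paired against the local weight structure of $\sH$. Simultaneously, the norm homomorphism $|s|$ shifts by $m \mapsto \log|\chi^m(u)|$, which is exactly the pairing of $m$ with the tropicalization of $u$. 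The exponential weight-twist in the metric therefore changes by precisely the inverse of the shift coming from the change of identification, and the two effects cancel, yielding a well-defined metric on $\rho(\sH)|_U$.

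Conditions (1)--(3) can then be verified in turn: (1) is tautological, since for the trivial torus bundle the norm homomorphism $|s|$ is identically zero and there is no twist; (2) follows from functoriality of pullback combined with the naturality of the gluing construction; and (3) is essentially the content of the uniqueness argument, with $\mTbun_2(-|f|)$ being precisely the modification that makes $f$ metric-preserving, so that the formula produced by the construction on $\toricbun_2$ pulls back under $f^*$ to the formula produced on $\toricbun_1$.

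The main obstacle will be making the gluing cancellation in the second paragraph fully rigorous, which requires an explicit identification between the ``isometry over the preimage of $0 \times \toricvar_\Sigma^{\an}$'' property of the $\T$-linearization of $\sH$ and a quantitative compatibility statement relating the full $\T^{\an}$-action on $\sH^{\an}$ to the tropicalization map $\T^{\an} \to N_\bbR$. This is a purely local Berkovich-geometric statement which can be extracted from the analogous computations in the absolute case $\B = *$ treated in \cite{toricheights}; the torus-bundle case then reduces to it fibrewise, with the data of $\mTbun$ providing exactly the shift needed to glue the fibrewise pictures into a global metric.
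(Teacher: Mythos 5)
Your proposal is correct and follows essentially the same route as the paper: trivialize $\Tbun$ locally via $s$, define $\mrho(\sH)$ by pullback along the induced identification $\toricbun|_U \cong \toricvar_\Sigma \times U$, and twist the metric by the discrepancy factor of the $\T$-linearization evaluated at $|s|$ — the well-definedness under change of trivialization being exactly the factorisation of that discrepancy through $\T^{\mathrm{trop}}$, which you identify as the cancellation mechanism. The paper packages this as the function $\delta_v = |v|_{m^*V}/|v|_{pr^*V}$ factoring through $\T^{\mathrm{trop}}\times\toricvar_\Sigma$; your ``exponential weight-twist'' and ``shift by $m\mapsto\log|\chi^m(u)|$'' describe the same computation.
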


\begin{proof}
We cannot copy the proof from the non-metrized verbatim since we cannot assume the existence of local trivializing sections of constant norm $1$. We can, however, trivialize the underlying torus bundle and correct for the non-triviality of the norm. 

Let $s$ be a local trivialization of $\Tbun$. Then, there is a continuous map $|s|:\B^{an} \to \T^{trop}$ given by the metrics. This allows us to write $\toricbun \cong \B \times \toricvar_\Sigma$. We define the vector bundle by pullback. We proceed to define the metric.

Under the isomorphism $m^* V \simeq pr^* V$ for $m,pr:\T \times \toricvar_\Sigma\to \toricvar_\Sigma$ we have the following relationship on norms. Suppose $v$ is a nowhere vanishing section of $m^* V \simeq pr^* V$ over some open $U\subset \T \times \toricvar_\Sigma$. Then, the quotient of norms $\delta_v=\frac{|v|_{m^* V}}{|v|_{pr^* V}}$ factors over $\T^{trop} \times \toricvar_\Sigma$ by the assumption that the linearization be an isometry over $0\in \T^{trop}$.

Let $v \in V(U)$ for $U \subseteq \toricvar_\Sigma$ be a nowhere vanishing section. We define the norm of $pr^* v \in \rho(V)(\B \times U)$ to be $|v(u)|\cdot \delta_v(|s|,u)$. It is easy to see that it doesn't depend on the choice of section. This construction defines the unique functor satisfying the conditions on $\mrho$.
\end{proof}

\begin{proposition}\label{prop:divisors_on_toricbun}
    Let $\B$ be a variety over a global field $\K$ and $\toricbun$ be an adelic toric bundle with fan $\Sigma$ and underlying adelic torus bundle $\aTbun$ over $\B$. Let $\toricvar_\Sigma$ be the toric variety associated to $\Sigma$. Then, there is a unique homomorphism
    \[
    \arho:\aVect_\bbT(\toricvar_\Sigma) \to \aVect(\toricbun_\Sigma)
    \]
    which restricts to $\mrho$ at each place.
\end{proposition}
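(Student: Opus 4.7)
The plan is to build $\arho$ place by place from the local functor $\mrho$ of Proposition \ref{prop:mrho} and then check that the resulting collection of metrized vector bundles satisfies the adelic coherence condition. Uniqueness will then be immediate from the uniqueness asserted in Proposition \ref{prop:mrho} at each place.

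First, let $\aV \in \aVect_\bbT(\toricvar_\Sigma)$, with underlying $\bbT$-linearized vector bundle $V$. Set $\arho(V) \defeq \rho(V)$ as an algebraic vector bundle on $\toricbun_\Sigma$. For each place $v \in M_\K$, the base change $\aV_v$ is a $\bbT$-linearized metrized vector bundle on $\toricvar_{\Sigma,\K_v}$, and the base change $\aTbun_v$ of the adelic torus bundle is a metrized torus bundle over $\B_{\K_v}$; applying Proposition \ref{prop:mrho} with respect to $\aTbun_v$ yields a metric $\mrho_v(\aV_v)$ on $\rho(V)$. Take $\arho(\aV)$ to be $\rho(V)$ equipped with the family of metrics $(\mrho_v(\aV_v))_{v \in M_\K}$. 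The required restriction property holds by construction, and the functoriality in $\aV$ follows from the functoriality of each $\mrho_v$.

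The substantive step is to verify that $(\rho(V), (\mrho_v(\aV_v))_v)$ is in fact adelic, i.e.\ that there exists a dense open $U \subseteq \sS$ and a model of $\rho(V)$ over $U$ inducing the metrics $\mrho_v$ for all $v \in U$. By definition of $\aV$, there is a dense open $U_1 \subseteq \sS$ and a $\bbT$-equivariant model $\sV_{U_1}$ of $V$ defining the metrics of $\aV$ at places in $U_1$; similarly, $\aTbun$ admits a model $\Tbun_{U_2}$ over some dense open $U_2$. Shrinking further, we may assume both models exist over $U = U_1 \cap U_2$ and that $\toricbun$ admits a projective model $\toricbun_U = (\Tbun_U \times_U \toricvar_{\Sigma, U})/\bbT$ over $U$. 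Applying the algebraic construction $\rho$ integrally over $U$ to $\sV_{U}$ produces a model of $\rho(V)$ on $\toricbun_U$. The key compatibility I would then check is that at a place $v \in U$, the metric induced by this integral model agrees with $\mrho_v(\aV_v)$; this is where I expect the main technical work. The argument is local on $\B$: trivialize $\Tbun_U$ after shrinking $U$ (after possibly refining a cover by Zariski opens on which a trivialization exists), which gives an isomorphism $\toricbun_U \cong \B_U \times \toricvar_{\Sigma,U}$, so that $\rho$ becomes pullback. Since the trivialization has norm $1$ at places of good reduction for $\Tbun_U$, the correction factor $\delta_v(|s|, \blank)$ appearing in the proof of Proposition \ref{prop:mrho} is trivial at $v \in U$, and the pullback of the model metric on $\sV_U$ is exactly $\mrho_v(\aV_v)$.

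Finally, uniqueness of $\arho$ follows from the place-by-place uniqueness in Proposition \ref{prop:mrho}: any other adelic functor $\arho'$ restricting to $\mrho$ at each place must agree with $\arho$ on every metric, hence coincide with it. The main obstacle, as noted, is the local model compatibility of the previous paragraph; everything else is bookkeeping.
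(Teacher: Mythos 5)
Your proposal is correct and follows essentially the same strategy as the paper's proof: spread out both $\aV$ and $\aTbun$ to models over a common dense open $U \subseteq \sS$, apply the algebraic $\rho$ integrally over $U$, and observe that the resulting model metrics agree with the place-wise $\mrho_v$ because a trivializing section of the integral torus bundle tropicalizes to $0$ at every place in $U$, making the correction factor $\delta_v$ from Proposition \ref{prop:mrho} trivial. The paper's proof is terser (it asserts this agreement without unwinding the trivialization argument), but you are supplying the details of the same construction rather than taking a different route.
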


\begin{proof}
    We need to show metrics defined by $\mrho$ glue together to yield an adelic metric. Let $\ov{V}$ be an adelic vector bundle. Let $U\subset \sS$ be an open dense subset on which both $\aTbun$ and $\ov{V}$ have a model. By assumption, there is a model $\sB$ over $U$ and a $\T$-torsor $\aTbun_U$ defining the metrics of $\aTbun$ at almost all places. Then, $\rho_{\aTbun_U}(\ov{V}_U)$ defines the metrics at all places included in $U$. Thus one obtains an adelic vector bundle.
\end{proof}

We note that one may analogously define homomorphisms for Cartier divisors $\rho:\Div_\T(\toricvar_\Sigma) \to \Div_\T(\toricvar_\Sigma)$, $\mrho:\mDiv_\T(\toricvar_\Sigma) \to \mDiv_\T(\toricvar_\Sigma)$ and $\arho:\aDiv_\T(\toricvar_\Sigma) \to \aDiv_\T(\toricvar_\Sigma)$. The rest of the section is devoted to study their positivity properties.

\begin{lemma}\label{lemm:semipositive_rho}
    Let $\ov{D}$ be a semipositive toric metrized divisor on $\toricvar_\Sigma$ with underlying divisor $D$ whose Newton polytope is $\Delta$. Let $\ov{L}$ be a metrized line bundle on $\B$ such that $\pi^*\ov{L} + \mTbun(m)$ is semipositive for all $m \in \Delta\cap M_\bbQ$. Then, the line bundle $\pi^*\ov{L} + \mrho(\ov{D})$ is semipositive.
\end{lemma}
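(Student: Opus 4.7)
The plan is to combine an approximation argument with the toric dictionary, reducing to the case where $\theta$ is piecewise affine and rational, and then producing an explicit nef model metric.

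By the toric dictionary \cite[Theorem 4.8.1]{toricheights}, the semipositive metric on $\ov{D}$ corresponds to a continuous concave function $\theta$ on $\Delta$. Such $\theta$ can be approximated uniformly by a sequence of rational piecewise affine concave functions $\theta_k$; each $\theta_k$ defines a semipositive metric on a rational multiple of $D$, and the local trivialisation formula in Proposition~\ref{prop:mrho} ensures that $\mrho(\ov{D}_k)\to\mrho(\ov{D})$ uniformly as metrised line bundles. Since uniform limits of semipositive metrics are semipositive, we may assume that $\theta$ is rational piecewise affine.

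In that case, after refining the fan $\Sigma$ to a simplicial subdivision on which $\theta$ is affine on maximal cones and passing to a multiple of $D$, the corners of $\theta$ lie in $\Delta\cap M_\bbQ$. By \cite[Theorem~3.5.3]{toricheights} the data $(\Sigma,\theta)$ define a toric integral model of $\toricvar_\Sigma$ in the non-archimedean case with discrete valuation, and combining this with a suitable projective model of $\B$ and Zariski-local trivialisations of the underlying torus bundle $\Tbun$, the toric-bundle construction yields an integral model $\toricmod$ of $\toricbun$ together with a $\bbQ$-line bundle extending $\pi^*\ov{L}+\mrho(\ov{D})$ and realising its metric; the archimedean case and the case of a non-discretely valued field are handled by a parallel smooth/continuous version of the same construction.

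To conclude, it remains to check that this $\bbQ$-line bundle is nef. Every vertical curve on $\toricmod$ is numerically a $\bbQ$-combination of curves of two kinds: curves contained in a toric fibre, on which nefness reduces to that of $\ov{D}$ itself; and curves supported in a torus-fixed section $\calV(\sigma)\cong\sB$ indexed by a maximal cone $\sigma\in\Sigma$. By toric geometry, the restriction of $\mrho(\ov{D})$ to $\calV(\sigma)$ is isometric to $\mTbun(m_\sigma)$ shifted by the constant $\theta(m_\sigma)$, where $m_\sigma\in\Delta\cap M_\bbQ$ is the vertex of $\Delta$ dual to $\sigma$, whence the restriction of $\pi^*\ov{L}+\mrho(\ov{D})$ to $\calV(\sigma)$ is semipositive by the hypothesis. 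The main difficulty is to show that these two classes of curves span the relative Mori cone of $\toricmod\to\sB$; this is the toric analogue of the characterisation of nef toric $\bbR$-divisors by pairings against torus-invariant curves, and follows from the fibrewise toric structure of $\toricmod\to\sB$.
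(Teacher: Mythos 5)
Your approach — build a toric model $\toricmod$ of $\toricbun$, produce a $\bbQ$-line bundle extending $\pi^*\ov{L}+\mrho(\ov{D})$, and verify nefness of its restriction to special fibres via a Mori-cone decomposition — is genuinely different from the paper's proof, which never constructs a total-space model. The paper instead decomposes $l(\pi^*\ov{L}+\mrho(\ov{D}))$ locally on $\B$ into $\chi^m$-eigenspaces (each isomorphic to $l(\ov{L}+\mTbun(m)+\theta(m))$, semipositive by hypothesis), metrizes the pushforward by the maximum of the eigenspace norms, and then uses the orthogonality estimate of \cite[Corollary 5.4]{torpos} to show that the resulting maximum-of-semipositive metric is uniformly close to the given one. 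This sidesteps any discussion of models of $\toricbun$ or of curves on its special fibres.

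There are two real gaps in your argument. First, the crucial Mori-cone claim — that on the special fibre of $\toricmod\to\sS$ every effective curve is a nonnegative $\bbQ$-combination of curves in toric fibres of $\toricbun\to\sB$ and curves in torus-fixed sections $\calV(\sigma)\cong\sB$ — is exactly what would make the argument work, and you acknowledge it is ``the main difficulty'' but only assert that it ``follows from the fibrewise toric structure''. This is not a formal consequence: Oda's criterion (nefness of toric $\bbR$-divisors detected on invariant curves) is a statement about toric varieties, and there is no off-the-shelf analogue for locally trivial fibrations with toric fibres over an arbitrary projective base. The special fibre of $\toricmod\to\sS$ is a toric bundle over the special fibre $\sB_v$ of the base model, which is typically singular and whose cone of curves is not controlled by the toric data alone. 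You would need to supply a proof here, and it is not clear that the statement is even true in that generality. Second, the model construction presumes that the metrics on the underlying torus bundle $\mTbun$ come from an integral model of $\Tbun$, but the hypothesis only says $\ov{L}+\mTbun(m)$ is semipositive, which a priori gives model metrics only after approximation; a second layer of approximation (for the $\mTbun(m)$, simultaneously and coherently across $m$) is therefore needed and is not addressed. Until both points are fixed, the proposal does not close.
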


\begin{proof}
    Denote the local roof function of $\ov{D}$ by $\theta$ and let $\ov{L}(m) \defeq \pi^*\ov{L} + \mTbun(m) + \theta(m)$. For $m \in \Delta \cap M_\bbQ$, the line bundle $\aTbun(m)+\theta(m)$ on $\B$ parametrizes $\chi^m$-equivariant sections of $\mrho(\ov{D})$. We approximate the metric on $\pi^*\ov{L} + \mrho(\ov{D})$ by the maximum over the semipositive metrics defined on the $\chi^m$-invariant $\bbQ$-sections $\ov{L}(m)$. The semipositivity of $\pi^*\ov{L} +\mrho(\ov{D})$ follows immediately.

    Recall that given a globally generated line bundle $L$ on $X$, a norm $\|\blank\|$ on $H^0(X,L)$ induces a metric on $L$ by the quotient norm under the restriction map $H^0(X,L) \to H^0(x,L^{an}|_x)$ for $x \in X^{an}$. For two metrics on a line bundle we define their distance to be the supremum norm of the function
    \[
    \log \frac{|\blank|'}{|\blank|}.
    \]
    Let $l>1$ be a number such that $\log|l\Delta\cap M|/l < \epsilon$ and the metric induced by the supremum norm on $\Gamma(l D)$ differs from the norm on $\ov{D}$ by at most $\epsilon$. Such an $l$ exists by the characterization of semipositive metrics on semiample line bundles in \cite[Section 3.3]{Chen_Moriwaki_semipositive_extensions} in the non-archimedean and \cite{moriwaki_semiample_semipositive_metrics_archimedean} in the archimedean case. On $\Gamma(l D)$ we know the following inequality of norms from \cite[Corollary 5.4]{torpos}. For $s = \sum_{m \in \frac{1}{l}\Delta \cap M} \gamma_m\chi^m\in \Gamma(l D)$, we have
    \[
    \max_{m \in \frac{1}{l}\Delta \cap M} \|\gamma_m\chi^m\|_{\sup}\leq \|s\|_{\sup} \leq |l\Delta\cap M| \max_{m \in \frac{1}{l}\Delta \cap M} \|\gamma_m\chi^m\|_{\sup}.
    \]
    The expression $\max_{m \in \frac{1}{l}\Delta \cap M} \|\gamma_m\chi^m\|_{\sup}$ defines a norm on $\Gamma(l D)$ and by taking the quotient norm a norm on $D$. The distance of this norm to that on $\ov{D}$ is at most $2\epsilon$.
    
    We now define a norm on $\rho(D) + \pi^* L$ approximating the norm on $\arho(\ov{D}) + \pi^*\ov{L}$. Locally on $B$, we can decompose sections of $l(\rho(D) + \pi^* L)$ into eigenspaces, i.e.\ sections will be of the form $\sum_{m \in \frac{1}{l}\Delta \cap M} \gamma_m$ for sections $\gamma_m \in \Gamma(U,lL(m))$. We metrize $l(\rho(D) + \pi^* L)$ by taking the maximum of the norms on $lL(m)$. The induced metric on $\rho(D) + \pi^* L$ differs at most by $2 \epsilon$ from the metric on $\pi^*\ov{L} +\mrho(\ov{D})$. Furthermore it is semipositive since it is the maximum of semipositive metrics.
\end{proof}

\begin{corollary}
    If $\mTbun$ is integrable, then $\mrho$ preserves integrability. If $\mTbun$ is flat, then $\mrho$ preserves semipositivity.
\end{corollary}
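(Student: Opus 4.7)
My plan is to reduce both statements to Lemma \ref{lemm:semipositive_rho}, which converts semipositivity of $\mrho(\ov{D}) + \pi^*\ov{L}$ into a uniform requirement on $\pi^*\ov{L} + \mTbun(m)$ as $m$ ranges over the Newton polytope of $\ov{D}$.

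For the flatness assertion, I first note that the collection of flat metrized line bundles is closed under duals, tensor products, and non-negative real scalings, hence forms an $\bbR$-linear subspace of $\mPic_\bbR(\B)$ whose elements are in particular semipositive. If $\mTbun$ is flat, the associated homomorphism $M \to \mPic(\B)$ lands in this subspace and extends $\bbQ$-linearly to give $\mTbun(m)$ semipositive for every $m \in M_\bbQ$. Lemma \ref{lemm:semipositive_rho} applied with $\ov{L}$ trivial then directly yields that $\mrho(\ov{D})$ is semipositive for any semipositive toric metrized divisor $\ov{D}$.

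For the integrability assertion, I use the dictionary between virtual metrized polytopes and integrable toric metrized divisors recalled in the preliminaries to write $\ov{D} = \ov{D}_+ - \ov{D}_-$ with each $\ov{D}_\pm$ a semipositive toric metrized divisor. Let $\Delta_\pm$ be the Newton polytope of $\ov{D}_\pm$ and let $v_1,\dots,v_k$ be its (finitely many, rational) vertices. By integrability of $\mTbun$, for each $i$ I can select a semipositive $\ov{B}_i$ such that $\mTbun(v_i) + \ov{B}_i$ is semipositive, and I set $\ov{L}_\pm \defeq \sum_i \ov{B}_i$, still semipositive. Any $m \in \Delta_\pm \cap M_\bbQ$ admits, by triangulating $\Delta_\pm$, an expression $m = \sum_i \lambda_i v_i$ as a convex combination with $\lambda_i \in \bbQ_{\geq 0}$, whence
\[
\mTbun(m) + \ov{L}_\pm \;=\; \sum_i \lambda_i \bigl(\mTbun(v_i) + \ov{L}_\pm\bigr)
\]
is a non-negative rational combination of semipositive metrized $\bbR$-line bundles, and thus itself semipositive. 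Lemma \ref{lemm:semipositive_rho} then produces semipositivity of $\mrho(\ov{D}_\pm) + \pi^*\ov{L}_\pm$, so $\mrho(\ov{D}_\pm)$ is integrable as the difference of two semipositives, and finally $\mrho(\ov{D}) = \mrho(\ov{D}_+) - \mrho(\ov{D}_-)$ is integrable.

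The chief subtlety I anticipate is the uniformity of $\ov{L}$: integrability of $\mTbun$ supplies a semipositive correction only pointwise in $m$, whereas Lemma \ref{lemm:semipositive_rho} requires a single $\ov{L}$ valid simultaneously over all rational points of $\Delta_\pm$. This is resolved by reducing to the finite vertex set and exploiting closure of the semipositive cone under non-negative convex combinations.
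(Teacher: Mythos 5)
Your proof is correct and follows essentially the same route as the paper: both arguments reduce to Lemma \ref{lemm:semipositive_rho}, handle the flat case by observing that $\mTbun(m)$ is itself semipositive so $\ov{L}=0$ suffices, and handle the integrable case by finding, via convexity of the semipositive cone, a single semipositive correction $\ov{L}$ that works simultaneously over all of $\Delta$ by first dealing with the finitely many vertices and then summing. You merely spell out the details the paper leaves implicit (the decomposition $\ov{D}=\ov{D}_+-\ov{D}_-$, and the rational convex-combination step), which is fine.
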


\begin{proof}
    Let $\ov{D}$ be a semipositive toric divisor. We need to show that there exists a semipositive line bundle $\ov{L}$ on the base $B$ such that $\arho(\ov{D}) +\pi^* \ov{L}$ is semipositive. For this it suffices to find a semipositive $\ov{L}$ such that $\aTbun(m)+ \pi^*\ov{L}$ is semipositive for all $m \in \Delta$. We can replace the condition for all $m \in \Delta$ by a condition for finitely many points since semipositive divisors form a convex cone. For any single $m$, this is possible by the assumption that $\aTbun$ be integrable. Summing up over semipositive line bundles yields a single $\ov{L}$ for all $m \in \Delta$. The second claim is immediate from Lemma \ref{lemm:semipositive_rho}.
\end{proof}

\begin{example}
    We freely use the notation of Example \ref{eg:nth_power_map}. Let $(\B,f,\Tbun,f^*\Tbun \cong \Tbun^{\otimes r})$ be a dynamical torus bundle and let $\aTbun$ be $\Tbun$ endowed with its canonical metric. Then, for any fan $\Sigma$ there is an endomorphism of the toric bundle associated to $\Tbun$ and $\Sigma$ given by
    \[
        \sF:\toricbun \xrightarrow{\Box^r} \toricbun^{\otimes r} \cong f^*\toricbun \to \toricbun.
    \]
    There is a canonical isomorphism $\sF^*\rho(D) \cong r\rho(D)$ that induces a canonical metric on $\rho(D)$ which we will denote by $\ov{\rho(D)}$. There is an isometry $\ov{\rho(D)} \cong \arho_{\aTbun}(D)$. This, in particular, applies to multiplication by $r$ on semiabelian varieties.
\end{example}

\subsection{Global sections}
We would now like to study the global sections of vector bundles constructed in the previous section. Let $\toricvar$ be a variety with an action of $\T$ and let $\Tbun$ be a $\T$-bundle on $\B$. Let $\toricbun = (\Tbun \times \toricvar)/\T$ be the corresponding torus fibre bundle. Let $V\in\Vect_\T(\toricvar)$ be a $\T$-linearized vector bundle. Note that the pushforward $\pi_* V$ along a $\T$-equivariant morphism is naturally endowed with a $\T$-linearization. After recalling \cite[Proposition 1.8]{knop_lange_commutative_algebraic_groups_and_intersections_of_quadrics} in our notation, we prove an adaptation to the metrized setting.

\begin{lemma}\label{prop:geometric_global_sections}
    Let $\sF$ be a $\T$-linearized quasi-coherent sheaf on $\toricvar$. Let $\pi:X\to Y$ be a $\T$-equivariant morphism of $\T$-varieties. Let $\Tbun$ be a $\T$-bundle on $\B$ and $\Pi:\sX \to \sY$ be the induced map on torus fibre bundles. Then,
    \[
        \Pi_* \rho(\sF) \cong \rho(\pi_*\sF).
    \]
    In particular, if $X$ is a toric variety and $Y=*$ the pushforward of $\sO(\rho(\Delta))$ can be decomposed into eigenspaces as
    \[
        \pi_* \sO(\rho(\Delta))\cong \bigoplus_{m \in \Delta \cap M} \Tbun(m).
    \]
\end{lemma}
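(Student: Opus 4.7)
The plan is to reduce to a local computation on the base, where the torus bundle trivializes, and then invoke the known toric statement. First, since both sides of $\Pi_{*}\rho(\sF)\cong\rho(\pi_{*}\sF)$ are quasi-coherent sheaves on $\sY$ and the assertion is local on $\sY$, and in particular local on $\B$, I would pick a Zariski open $U\subseteq\B$ over which $\Tbun|_U$ is trivializable. On such a $U$, the construction of $\rho$ recalled in the discussion preceding Proposition~\ref{prop:mrho} gives $\sX|_U\cong U\times X$, $\sY|_U\cong U\times Y$, and under these identifications $\rho(\sF)|_{\sX_U}$ corresponds to the pullback of $\sF$ from $X$. The map $\Pi|_{\sX_U}$ becomes $\id_U\times\pi$, and $\pi_{*}\sF$ is pulled back accordingly.

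Having made this reduction, the required isomorphism $\Pi_{*}\rho(\sF)\cong\rho(\pi_{*}\sF)$ on $U$ is precisely the flat base change isomorphism along the flat morphism $U\to\Spec\K$ applied to $\pi_{*}$. This handles existence of the isomorphism locally, and canonicity is immediate since $\rho$ is defined as pullback composed with the $\T$-linearization correction. The main point is therefore to check that the local isomorphisms glue across different trivializations: two trivializations of $\Tbun$ on an overlap differ by a $\T$-valued transition function, and the isomorphism between the two descriptions of $\rho(\sF)$, resp.\ $\rho(\pi_{*}\sF)$, is exactly the one induced by the $\T$-linearization on $\sF$, resp.\ on $\pi_{*}\sF$. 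Since pushforward along a $\T$-equivariant morphism is functorial for $\T$-linearizations, the two glueing data coincide, producing a global isomorphism over $\B$. This is the step I expect to require the most care, because one has to verify compatibility of the $\T$-linearization on $\sF$ with the induced one on $\pi_{*}\sF$ under the transition functions, though it is ultimately formal.

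For the second assertion, I take $Y=\{*\}$ and $X=\toricvar_{\Sigma}$, so that $\pi_{*}\sO(D)$ is simply $H^{0}(\toricvar_{\Sigma},\sO(D))$ viewed as a $\T$-representation. By the classical toric description (e.g.\ \cite[Proposition 4.3.3]{Cox_Little_Schneck_Toric}), this decomposes into one-dimensional $\T$-eigenspaces indexed by $\Delta\cap M$, namely $\bigoplus_{m\in\Delta\cap M}\K\cdot\chi^{m}$, where $\K\cdot\chi^{m}$ is the character $m$. The general statement then gives
\[
\pi_{*}\sO(\rho(\Delta))\cong\rho\Bigl(\bigoplus_{m\in\Delta\cap M}\K\cdot\chi^{m}\Bigr)\cong\bigoplus_{m\in\Delta\cap M}\rho(\K\cdot\chi^{m}),
\]
and by the very definition of the torus bundle associated to a character (as in the construction of $c:M\to\Pic(\B)$ in the introduction) one has $\rho(\K\cdot\chi^{m})\cong\Tbun(m)$. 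Putting these together yields the claimed eigenspace decomposition.
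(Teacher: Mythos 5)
Your proof is correct and follows essentially the same route as the paper's: reduce to the case of a trivial torus bundle, apply flat base change along the flat morphism $\B \to \Spec K$, and check that the resulting isomorphism is independent of the choice of trivialization so that it glues. You simply spell out the details (including the explicit deduction of the eigenspace decomposition from the classical toric computation of $H^0(\toricvar_\Sigma,\sO(D))$ and the identification $\rho(K\cdot\chi^m)\cong\Tbun(m)$) that the paper leaves implicit.
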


\begin{proof}
    Assume first that $\Tbun$ is trivial. In this case the assertion follows from flat base change. The identification given by flat base change does not depend on the choice of trivialization of $\Tbun$. Hence, the isomorphism globalizes.
\end{proof}

Suppose that $Y=*$ and $\ov{V}$ is a metrized vector bundle on a proper $X$. Suppose further that $\Tbun$ is given a metric structure. Then, the $\pi_* V = H^0(X,V)$ has the structure of a $\T$-linearized metrized vector bundle with norm given by the supremum norm. On the other hand, $\Pi_*\rho(V)$ carries a metric given by the fibrewise supremum norm.

\begin{proposition}\label{prop:metrized_pushforward}
    The isomorphism
        \[
            \Pi_* \mrho(\ov{V}) \cong \mrho(H^0(X,V)_{\sup}).
        \]
    is an isometry.
\end{proposition}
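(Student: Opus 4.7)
The plan is to verify the isometry locally on $\B$ and fibrewise over $\B^{\an}$, reducing everything to the explicit formula for the metric of $\mrho(\ov V)$ given in the proof of Proposition \ref{prop:mrho}. Since the equality of metrics is local on $\B$, I choose a trivializing section $s$ of the underlying $\T$-bundle $\Tbun$; this induces an identification $\sX\cong\B\times X$ together with a tropicalization map $|s|:\B^{\an}\to N_\bbR$ recording the failure of $s$ to have constant unit norm. Lemma \ref{prop:geometric_global_sections} already supplies the underlying isomorphism of vector bundles, so only the equality of metrics remains to be verified.

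Fix $b\in\B^{\an}$ and a section $v\in H^0(X,V)$. On the left-hand side, $\|v\|_{\Pi_*\mrho(\ov V),\,b}$ is by definition the supremum over $\{b\}\times X$ of the norm of $v$ in $\mrho(\ov V)$. Unwinding the formula from the proof of Proposition \ref{prop:mrho}, this equals
\[
\sup_{x\in X^{\an}}|v(x)|_{\ov V}\cdot\delta_v\bigl(|s|(b),x\bigr),
\]
where $\delta_v$ denotes the $\T$-linearization factor of $\ov V$ relating $|{-}|_{m^*V}$ and $|{-}|_{\mathrm{pr}^*V}$.

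On the right-hand side, $H^0(X,V)_{\sup}$ is a $\T$-linearized metrized vector bundle over a point whose linearization is induced from that on $\ov V$. The hypothesis that $\ov V$ has a linearization isometric over $0\in N_\bbR$ immediately implies that the sup norm on $H^0(X,V)$ is invariant under the unit torus, so the induced scaling factor factors through $N_\bbR$. A direct unwinding of the induced action on global sections identifies this factor, evaluated at $n\in N_\bbR$, with $\sup_x|v(x)|_{\ov V}\cdot\delta_v(n,x)/\|v\|_{\sup}$. Applying Proposition \ref{prop:mrho} a second time, now to $H^0(X,V)_{\sup}$ viewed as a metrized bundle over a point, the norm of $v$ in $\mrho(H^0(X,V)_{\sup})|_b$ is $\|v\|_{\sup}$ multiplied by this factor at $|s|(b)$, which reproduces the left-hand side expression above.

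The main bookkeeping task is the explicit identification of linearization factors on $\ov V$ and on $H^0(X,V)_{\sup}$, which amounts to a routine but delicate change of variables $y=t^{-1}x$ on $X^{\an}$ together with the $\T$-invariance of the sup norm under the unit torus. Once the pointwise equality of metrics is established for each $b$ and each trivialization, independence from the chosen trivializing section follows from part (3) of Proposition \ref{prop:mrho}, so the local computations glue to a global isometry, completing the proof.
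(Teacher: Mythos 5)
Your proof is correct, and I verified the key identity: writing $t\cdot v$ for the induced action of $t\in\T$ on $H^0(X,V)$, a change of variables $y=t^{-1}x$ gives
\[
\|t^{-1}\cdot v\|_{\sup} \;=\; \sup_{x}\,|v(t^{-1}x)|\,\delta_{pr^*v}(t,t^{-1}x) \;=\; \sup_{y}\,|v(y)|\,\delta_{pr^*v}(t,y),
\]
which is exactly the factor you claim, and multiplying by $\|v\|_{\sup}$ at $|s|(b)$ recovers the left-hand side. Where you differ from the paper is in how much of this computation is carried out. The paper's proof is a one-liner: since a trivialization $s$ of $\Tbun$ over an open containing $b$ can always be rescaled so that $|s|(b)=0\in N_\bbR$ (a ``normed trivialization at $b$''), the linearization being an isometry over $0$ forces $\delta_{pr^*v}(|s|(b),x)\equiv 1$, so $\mrho(\ov V)$ restricted to $\Pi^{-1}(b)$ is literally $\ov V$, and likewise the scaling factor for $\mrho(H^0(X,V)_{\sup})$ at $b$ is trivial; both sides collapse to $\|v\|_{\sup}$ immediately. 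You work with an arbitrary trivialization and track the linearization factors $\delta$ on both sides through the change of variables, whereas the paper exploits the fibrewise freedom to choose $s$ so that the factors vanish. Both arguments are sound; yours is more explicit and makes visible the cocycle bookkeeping, while the paper's use of a normed trivialization at each $b$ turns the check into a tautology.
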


\begin{proof}
    Suppose that $\Tbun$ is trivial and the trivialization is a normed trivialization over $b \in \B^{\an}$. Then by construction, the metrized vector bundle $\mrho(\ov{V})$ restricted to $\Pi^{-1}(b)$ is precisely $\ov{V}$. Hereby the claim follows.
\end{proof}

\begin{lemma}\label{prop:metrized_global_sections}
    The eigenspaces $\Tbun(m)$ of the metrized vector bundle $\pi_* \sO(\mrho(\ov{\Delta}))$ are orthogonal to one another. Each eigenspace $\Tbun(m)$ carries the metric $\mTbun(m) + \theta_v(m)$.
\end{lemma}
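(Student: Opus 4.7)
The plan is to combine Proposition \ref{prop:metrized_pushforward} with the orthogonality of toric sections from Theorem \ref{thm:orthogonality_toric}. First, applying Proposition \ref{prop:metrized_pushforward} to the projection $\pi:\toricbun\to\B$ (with target $Y=*$), the pushforward $\pi_*\sO(\mrho(\ov{\Delta}))$ is isometrically identified with $\mrho$ applied to $H^0(\toricvar_\Sigma,\sO(\Delta))$ equipped with its $\T$-linearization and supremum norm. The underlying $\T$-representation decomposes into weight eigenspaces $\bigoplus_{m \in \Delta\cap M}\K\chi^m$, and since $\mrho$ is monoidal and compatible with direct sums, this yields the eigenspace decomposition $\bigoplus_{m\in\Delta\cap M}\mTbun(m)$ of the underlying line bundle, recovering Lemma \ref{prop:geometric_global_sections}.

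For the orthogonality statement, Proposition \ref{prop:metrized_pushforward} reduces the claim to checking that the one-dimensional subspaces $\K\chi^m \subseteq H^0(\toricvar_\Sigma,\sO(\Delta))$ are orthogonal with respect to the supremum norm on the fibre, since $\mrho$ is defined fibrewise. This is exactly Theorem \ref{thm:orthogonality_toric}: for any $s = \sum_m \gamma_m\chi^m$, one has $\|s\|_{\sup} \geq \max_m \|\gamma_m\chi^m\|_{\sup}$, which is the desired orthogonality.

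It remains to identify the metric on each eigenspace. By Definition \ref{defi:local_roof_function}, $-\log\|\chi^m\|_{v,\sup} = \theta_v(m)$, so the weight-$m$ metrized $\T$-representation $\K\chi^m$ with the supremum norm differs from the trivially normed weight-$m$ representation by a scalar metric shift of $\theta_v(m)$. By the construction of $\mrho$ in Proposition \ref{prop:mrho}, the trivially normed weight-$m$ representation maps to $\mTbun(m)$, and $\mrho$ transports such scalar shifts additively into the metric on the image; hence the $m$-th eigenspace carries the metric $\mTbun(m) + \theta_v(m)$.

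The main conceptual work is already packaged in Proposition \ref{prop:metrized_pushforward} and Theorem \ref{thm:orthogonality_toric}; beyond these reductions to the absolute toric case, the only step to verify carefully is that the $\mrho$-construction does indeed transport the sup-norm-induced scalar shift $\theta_v(m)$ on $\K\chi^m$ into the corresponding shift on $\mTbun(m)$, which is immediate from how metrics are defined in the proof of Proposition \ref{prop:mrho}.
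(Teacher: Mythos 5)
Your proof is correct and follows the same route as the paper's own argument: reduce to the absolute toric case via Proposition \ref{prop:metrized_pushforward}, invoke Theorem \ref{thm:orthogonality_toric} for orthogonality, and read off the metric shift on each eigenspace from Definition \ref{defi:local_roof_function}. The only difference is that you spell out the $\mrho$-transport of the scalar shift, which the paper leaves implicit.
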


\begin{proof}
    The claim follows by Proposition \ref{prop:metrized_pushforward} and a study of global sections on toric varieties. The orthogonality claim follows immediately from Theorem \ref{thm:orthogonality_toric}. The claim on the metrics on each eigenspace follows by the definition of the local roof function in Definition \ref{defi:local_roof_function}.
\end{proof}

\subsection{Arithmetic intersections}
This section is devoted to a generalization of \cite[Proposition 4.1]{cl2}. We work on an integrable smooth projective toric bundle $\toricbun$ with underlying adelic torus bundle $\aTbun \to \B$ with $\B$ smooth projective.

Let $\widetilde{\Sigma}$ be a smooth projective adelic fan with recession fan $\Sigma$. Let $\tau_i$ denote the rays of $\widetilde{\Sigma}$ and let $h_i$ denote the dual polytopes. For notation on adelic polytopes, we refer to Section \ref{subsec:toric_varieties_prelim}.

\begin{proposition}
    \label{empty_intersection}
    Let $\tau_1,\dots,\tau_r$ be rays in $\widetilde{\Sigma}$ not spanning a cone. Let $\toricbun_\Sigma$ be a toric bundle whose underlying torus bundle is endowed with an adelic metric $\aTbun$ with model metrics at finite places and a smooth metric at archimedean places. Let $\gamma \in \ArChow_{r}(\atoricbun)$ be an arithmetic cycle on a model $\atoricbun$ of $\toricbun_\Sigma$. Then,
    \[
        \adeg(\arho(h_1)\cdots\arho(h_r)\gamma) = 0.
    \]
\end{proposition}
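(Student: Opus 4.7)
The plan is to reduce, via the combinatorial toric structure at each place, to the classical fact that toric divisors from rays not spanning a cone have empty common intersection. The assumptions on the metric (model at finite places, smooth at archimedean places) guarantee one can work on a single projective model with smooth archimedean Green's functions.

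First, I would pass to a common projective model $\atoricbun$ of $\toricbun_\Sigma$ over $\sS$ that is compatible with $\widetilde{\Sigma}$ in the sense that it realises $\widetilde{\Sigma}_v$ at each finite place $v$ via the dictionary recalled in Section \ref{subsec:toric_varieties_prelim}. On such a model each $\arho(h_i)$ becomes a hermitian $\bbQ$-line bundle whose underlying $\bbQ$-divisor equals $\rho(D_{h_i})$, supported in the $\T$-invariant locus.

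Second, I would exploit Lemma \ref{prop:metrized_global_sections}: the pushforward $\pi_*\O(\arho(h_i))$ decomposes into orthogonal eigenspaces indexed by $m\in h_i\cap M$. For each vertex $m_i$ of $h_i$, the character $\chi^{m_i}$ yields a rational section $s_i$ of $\O(\arho(h_i))$ whose associated Cartier divisor is a non-negative combination of the prime toric divisors $\rho(D_\tau)$ for rays $\tau$ at which $m_i$ fails to maximise the support function of $h_i$. The combinatorial heart of the argument is that, because $\tau_1,\dots,\tau_r$ do not lie in a common cone of $\widetilde{\Sigma}$, one can select vertices $m_i$ -- possibly place by place -- so that the iterated Cartier divisors $\div(s_i)$ are in proper position with $\bigcap_{i=1}^r \Supp\div(s_i)=\emptyset$ on $\atoricbun$.

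Third, I would iterate the arithmetic first Chern class formula from Section \ref{subsec:chow_homology}:
\[
    \ac_1(\arho(h_i))[(Z,g_Z)] = [(\div(s_i)\cdot Z,\; -\log\|s_i\|\,\delta_{Z(\bbC)} + \omega_{\arho(h_i)}\, g_Z)],
\]
using the sections $s_i$ chosen above. The algebraic component of $\arho(h_1)\cdots\arho(h_r)\gamma$ vanishes by the combinatorial disjointness. For the Green current contributions, smoothness at archimedean places guarantees that each curvature form $\omega_{\arho(h_i)}$ is a smooth toric $(1,1)$-form factoring through tropicalisation; combined with the orthogonality of toric sections (Theorem \ref{thm:orthogonality_toric}) and the disjointness of supports, every term in the iterated star product contributes zero to the arithmetic degree.

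The principal obstacle is the archimedean bookkeeping: even after the algebraic divisors are arranged to be disjoint, Green currents are not compactly supported, so a priori the iterated Gillet--Soulé star product could produce non-zero boundary integrals. The smoothness hypothesis (rather than mere integrability) of the archimedean metric is essential, since it keeps every curvature form smooth and toric, reducing each integral to a combinatorial quantity on the fan that vanishes precisely when the rays do not span a cone.
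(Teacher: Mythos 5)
Your high-level plan (reduce to the combinatorial fact that divisors from rays not spanning a cone have empty common intersection) is the same as the paper's, but there are two genuine gaps in the execution.

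\textbf{The archimedean step is not actually carried out, and the key claim in it is false.} You assert that smoothness of the torus-bundle metric makes each curvature form $\omega_{\arho(h_i)}$ a \emph{smooth} toric $(1,1)$-form. This is incorrect: when $\tau_i$ is a vertical ray supported at an archimedean place, the Green's function of $h_i$ is a piecewise-linear function on $N_\bbR$ (coming from the support function of $h_i$ on $\widetilde{\Sigma}_v\cap(N_\bbR\times\{1\})$), and its $dd^c$ is a current concentrated on the non-linearity locus, not a smooth form. You correctly identify at the end that the non-compact support of Green currents is the difficulty, but then do not resolve it. The paper resolves it by noting that $\omega(\arho(h_i))$ vanishes on the preimage of the open locus $\{g_i=0\}$ (where the standard section is flat), so that $\Supp\bigl(\omega(\arho(h_2))\cdots\omega(\arho(h_r))\bigr)$ lies in the preimage of $\bigcap_{i\geq 2}\Supp g_i$, and the combinatorial condition $\prod g_i = 0$ (which follows directly from the rays not spanning a cone) forces $g_1$ to vanish there; then by approximation the intersection number is the integral of $g_1\,\omega(\arho(h_2))\cdots\omega(\arho(h_r))$ against the curvature of $\gamma$, which is zero. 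Your invocation of Theorem \ref{thm:orthogonality_toric} and Lemma \ref{prop:metrized_global_sections} is not doing any work here; they are not used in the paper's proof, and I do not see how they would substitute for the piecewise-linear support analysis.

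\textbf{The finite-place argument does not deal with regularity.} You propose to work on one projective model $\atoricbun$ realising $\widetilde{\Sigma}_v$ at each finite place, choose toric sections $s_i$ with $\bigcap_i\Supp\div(s_i)=\emptyset$, and iterate $\ac_1$. But to conclude that an empty scheme-theoretic intersection forces the arithmetic intersection number to vanish, you need either regularity of the model or some other justification (the relevant tool is \cite[Theorem 3 (2)]{gillet_soule_arithmetic_rr}, which is stated for generically smooth morphisms and requires a suitable ambient scheme). In general $\atoricbun$ is not regular. The paper circumvents this by observing that the adelic torus bundle is pulled back from a regular projective scheme $\sZ$, and that the toric bundle $\toricbun_{\widetilde{\Sigma}_v}$ over $\sZ$ built from the $(t+1)$-dimensional smooth toric variety $X_{\widetilde{\Sigma}_v}$ \emph{is} regular (smooth over regular base); the $\arho(h_i)$ are defined by pullback from there, where the intersection is genuinely empty. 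The paper also treats separately the case where the $\tau_i$ are supported at different places, which is automatic. Your ``possibly place by place'' remark hints at this case analysis but does not carry it out, and without it the argument that ``the iterated Cartier divisors are in proper position'' on a single model is not justified.
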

\begin{proof}
    Note first that if $\tau_1,\dots,\tau_r$ are not supported at one place the conclusion holds since one may find a regular scheme $\sY$ on which the $\arho(h_i)$ can be defined in such a way that they are still supported at the same places. On $\sY$ the intersection of the $\arho(h_i)$ is empty. Then, the conclusion follows from \cite[Theorem 3 (2)]{gillet_soule_arithmetic_rr}.

    Now consider the case that all $\tau_i$ are supported at a non-archimedean place $v$. Observe that the adelic torus bundle is induced by the pullback of an adelic torus bundle on a regular projective scheme $\sZ$ by projectivity of $B$. Note that the fan $\widetilde{\Sigma}_v \subset N_\bbR \times \bbR_{\geq 0}$ not only defines a model, but also a smooth toric variety of dimension $t+1$. Denote the monomial corresponding to the dual of $\bbR_{\geq 0}$ by $T$. The model $\toricmod_{\widetilde{\Sigma}_v}$ over $R_v$ is given as the closed subscheme of $X_{\widetilde{\Sigma}_v,R_v}$ cut out by $T=\pi$.
    
    The torus bundle $\aTbun \oplus \bbG_m$ on $\sZ$ and the fan $\widetilde{\Sigma}$ define a toric bundle over $\sB$. Since a smooth scheme over a regular base is regular the toric bundle $\toricbun_{\widetilde{\Sigma}_v}$ is in fact regular and the Cartier divisors $\arho(h_i)$ are defined by pullback of $\rho(h_i)$ from there. The intersection of the $\rho(h_i)$ in $\toricbun_{\widetilde{\Sigma}_v}$ is empty. Hence, the claim follows again by \cite[Theorem 3 (2)]{gillet_soule_arithmetic_rr}.

    It remains to deal with the case that the $\tau_i$ are supported at an archimedean place. We allow ourselves to work on the toric bundle $\toricbun_{\Sigma}$.

    Let us prove the vanishing of the current $g_{\arho(h_1)}\omega(\arho(h_2))\cdots\omega(\arho(h_r))$. Note that the Green's functions of the $h_i$ are given by piecewise linear functions $g_i:N_\bbR \to \bbR$. On the preimage of an open subset $U\subset N_\bbR$ where $g_i$ vanishes the metric is flat. In fact, the standard section of $\rho(D_i)$ is flat on this locus. The empty intersection condition in the combinatorial setting implies that $\prod g_i = 0$. Hence, $g_{\arho(\ov{D}_0)}$ vanishes on the support of $\omega(\arho(\ov{D}_1))\cdots\omega(\arho(\ov{D}_t))$. We note that by approximation the intersection number is precisely the integral of $g_{\arho(h_1)}\omega(\arho(h_2))\cdots\omega(\arho(h_r))$ against the curvature form of $\gamma$. This finishes the proof.
\end{proof}

For the $n$-th power map $\Box^n:\toricbun \to \toricbun^{\otimes n}$ introduced in Example \ref{eg:nth_power_map} we have $(\Box^n)^*\arho_{\toricbun^{\otimes n}}(\ov{D}) = \arho_\toricbun([n]^*\ov{D})$. Moreover, for any toric $\bbQ$-Cartier divisor $\ov{D}$ on a model $\toricvar_{\widetilde{\Sigma}}$, there is a unique $\bbQ$-Cartier divisor $\ov{D}'$ on the same model such that $[n]^*\ov{D}' = \ov{D}$. This implies that the above proposition applies more generally to algebraic torus bundles because $(\Box^n)^*\arho_{\toricbun^{\otimes n}}(\ov{D})$ is algebraically metrized. Hence the conclusion stays valid for algebraically metrized torus bundles by passing to $\toricbun^{\otimes n}$.

We need an approximation result in order to apply this for integrable torus bundles that are not necessarily algebraic.

\begin{proposition}
    \label{empty_intersection}
    Let $\tau_1,\dots,\tau_r$ be rays in $\widetilde{\Sigma}$ not spanning a cone. Let $\toricbun_\Sigma$ be a toric bundle whose underlying torus bundle is endowed with integrable adelic metrics $\aTbun$. Let $\gamma \in \ArChow_{r}(\atoricbun)$ be an arithmetic cycle on a model $\atoricbun$ of $\toricbun_\Sigma$. Then,
    \[
        \adeg(\arho(h_1)\cdots\arho(h_r)\gamma) = 0.
    \]
\end{proposition}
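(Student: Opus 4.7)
The plan is to reduce the integrable case to the algebraic case already established in the previous proposition by an approximation argument. Write the metric on $\aTbun$ as a difference of two semipositive adelic torus bundle metrics, so that each $\arho(h_i)$ becomes a difference of two semipositive adelic toric Cartier divisors of the form $\arho(\ov{h}_i)$ with $\ov{h}_i$ a semipositive adelic metrization of $h_i$; by multilinearity it suffices to prove the vanishing when each $\arho(h_i)$ is replaced by such a semipositive representative.

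Next I would approximate each semipositive $\arho(\ov{h}_i)$ by a Cauchy sequence $\arho(\ov{h}_{i,k})$ of algebraically metrized semipositive toric divisors. Concretely, by the characterization of semipositive toric metrics via concave functions on $N_\bbR$ (invoked in the proof of Lemma \ref{lemm:semipositive_rho}) one can approximate the defining metric on $\aTbun$ at each place uniformly by smooth (archimedean) and model (non-archimedean) semipositive toric metrics that agree with $\aTbun$ at almost all places; the resulting $\ov{h}_{i,k}$ are then algebraically metrized and their combinatorial support data (the ray $\tau_i$ and the dual polytope $h_i$) are unchanged, so the empty-intersection hypothesis is preserved. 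For each fixed $k$, the remark following the previous proposition (the one extending the vanishing to algebraically metrized torus bundles via the $n$-th power map $\Box^n$) gives
\[
\adeg\bigl(\arho(\ov{h}_{1,k})\cdots\arho(\ov{h}_{r,k})\gamma\bigr) = 0.
\]

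Finally I would pass to the limit using continuity of the arithmetic intersection pairing on the space of integrable adelic $\bbR$-Cartier divisors with respect to uniform convergence of Green's functions, as recorded in \cite[Section 3.2]{ballay_nakai_moishezon_R-Cartier} and already used implicitly in the proof of Proposition \ref{prop:intersect_integrable_line_bundles_with_cycles}. Since $\gamma$ is fixed, this continuity gives
\[
\adeg\bigl(\arho(\ov{h}_1)\cdots\arho(\ov{h}_r)\gamma\bigr) = \lim_{k\to\infty} \adeg\bigl(\arho(\ov{h}_{1,k})\cdots\arho(\ov{h}_{r,k})\gamma\bigr) = 0,
\]
and by multilinearity the same vanishing holds for the original integrable metrics.

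The main technical obstacle I anticipate is justifying the continuity of the intersection pairing against a fixed arithmetic cycle $\gamma$ that is only assumed to live in $\ArChow_{r}(\atoricbun)$ rather than to come from an operational class; one needs to know that uniform convergence of the Green's functions of integrable toric divisors forces convergence of the local contributions to the arithmetic intersection at every place. This is standard in the projective setting by writing the archimedean side as integrals of Green's currents against curvature forms (as in the archimedean part of the previous proposition) and the non-archimedean side by the Chambert-Loir measure approach, but it should be invoked carefully because $\gamma$ need not be nef.
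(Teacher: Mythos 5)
Your overall strategy—approximate the integrable torus bundle metric by algebraic ones, invoke the algebraic case, and pass to the limit—matches the paper's. But the first step as you've phrased it does not work. You propose to ``write the metric on $\aTbun$ as a difference of two semipositive adelic torus bundle metrics.'' Recall that a torus bundle whose image consists of semipositive line bundles is automatically flat (this is noted right after Definition~\ref{defi:torus_bundle}: for such a bundle both $\aTbun(m)$ and $\aTbun(-m)$ are semipositive), and a difference of flat torus bundle metrics is again flat; so this decomposition would force the given integrable $\aTbun$ to be flat, which is not assumed. In short, the decomposition you want to perform generally does not exist at the level of torus bundle metrics.

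The paper sidesteps this by fixing a basis $m_1,\dots,m_t$ of $M$, writing each individual line bundle $\aTbun(m_i)$ as a uniform limit of differences $L^+_{k,i}-L^-_{k,i}$ of algebraic semipositive line bundles, and then defining $\aTbun_k$ by linear extension of $m_i\mapsto L^+_{k,i}-L^-_{k,i}$. The crucial additional observation, which your proposal leaves implicit, is that for $C$ large enough the line bundle $C\sum_i\left(L^+_{k,i}+L^-_{k,i}\right)+\arho_{\aTbun_k}(\ov{D}_j)$ is semipositive uniformly in $k$; combined with the explicit Lipschitz estimate showing $\arho_{\aTbun_k}(\ov{D})\to\arho_{\aTbun}(\ov{D})$ uniformly, this is exactly what allows one to apply the continuity bound in Proposition~\ref{prop:intersect_integrable_line_bundles_with_cycles} against the fixed cycle $\gamma$. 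So your closing worry about continuity of the pairing against a plain $\gamma\in\ArChow_r(\atoricbun)$ is well-placed—that is the crux—and the uniform semipositivity bound is what resolves it. Finally, the appeal to the characterization of semipositive toric metrics via concave functions on $N_\bbR$ is misplaced here: that describes the metric in the fibre direction (toric divisors on $\toricvar_\Sigma$), not the metrics on the line bundles $\aTbun(m)$ over $\B$, which is what needs approximating.
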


\begin{proof}
    
    We approximate $\aTbun$ by algebraic metrics $\aTbun_k$. Pick a basis $m_1,\dots,m_t$ of $M$. For each $m_i$ we can write $\aTbun(m_i)$ as the difference of limits algebraic semipositive metrics, i.e.\ there are semipositive algebraic line bundles $L^+_{k,i}$ and $L^-_{k,i}$ such that for $k\to\infty$ the metrics converge uniformly to semipositive line bundles $L^+_i$ and $L^-_i$ such that $\aTbun(m_i) = L^+_i - L^-_i$. Extending the map $m_i\mapsto L^+_{k,i} - L^-_{k,i}$ by linearity to $M$ allows us to approximate $\aTbun$ by algebraic metrics $\aTbun_k$.
    
    Note that for $C$ big enough the line bundle $C\sum_i\left( L^+_{k,i} + L^-_{k,i}\right) + \arho_{\aTbun_k}(\ov{D}_j)$ is semipositive for all $j=0,\dots,t$ and all $k$. We are done once we show that $\arho_{\aTbun_k}(\ov{D})$ converges uniformly to $\arho_{\aTbun}(\ov{D})$ for $\ov{D} = \ov{D}_j$ for some $j$. Fix a norm $\|\blank\|$ on $N_\bbR$. Since the Green's function $g$ of $\ov{D}$ on $N_\bbR$ is piecewise linear on finitely many polyhedra it is Lipschitz continuous with some Lipschitz constant $K$. Varying the metric on the torus bundle by vector $v(b)$ in $N_\bbR$ of length at most $\epsilon$ at a $b \in \B^{an}$ translates the Green's function by $v(b)$ on the fibre at $b$. By the Lipschitz continuity, we obtain $\| g(x) - g_j(x-v)\|_{\sup} \leq K\|v\|$. The uniform convergence of line bundles $\aTbun_k(m) \to \aTbun(m)$ implies that the difference of $\aTbun_k$ and $\aTbun$ given by $v_k:\B^{an} \to N_\bbR$ converges uniformly to 0.
\end{proof}

\subsection{Successive minima of toric bundles}~\label{sec:suc_min}

Let $X$ be a proper variety and let $\ov{L}$ be an adelic line bundle on $X$ whose underlying line bundle is big. For a real number $\lambda$, the set $X_{\ov{L}}(\lambda)$ denotes the Zariski closure of
\[
    \{ x \in X(\Kbar)\mid  h_{\ov{\sL}}(x) \leq \lambda\}.
\]
Since $X$ is a noetherian topological space the filtration given by varying $\lambda$ has only finitely many stages. It yields the so-called height filtration
\[
    X_0 = \emptyset \subsetneq X_1 \subsetneq \dots \subsetneq X_r = X.
\]
Often, one considers the sub-filtration consisting of the $X_i$ such that $\dim X_i > \dim X_{i-1}$. This filtration is closely related to the notion of Zhang minima. Let $\toricbun$ be a proper toric bundle and $\pi^* \ov{\sM} + \arho(\ov{D})$ be an adelic line bundle on $\toricbun$ whose underlying line bundle is big. Then, there are other natural filtrations.

\begin{lemma}
    The set $\toricbun_\lambda = \ov{\{ x \in \toricbun(\Kbar)\mid  h_{\ov{\sL}}(x) \leq \lambda\}}$ is invariant under the torus action.
\end{lemma}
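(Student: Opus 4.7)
The strategy exploits that torsion points of $\T$ preserve the height $h_{\ov{\sL}}$ and form a Zariski dense subset of $\T$, so invariance under torsion propagates to invariance under all of $\T$ after taking Zariski closures.

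First I would show that $S_\lambda \defeq \{x \in \toricbun(\Kbar) \mid h_{\ov{\sL}}(x) \leq \lambda\}$ is $\T_{\tors}(\Kbar)$-invariant. Decompose $\ov{\sL} = \pi^*\ov{\sM} + \arho(\ov{D})$. The $\pi^*\ov{\sM}$-contribution to the height is trivially invariant since $\pi \circ \mu_\zeta = \pi$ for every $\zeta \in \T(\Kbar)$. For the contribution from $\arho(\ov{D})$, the key point is that any torsion element $\zeta \in \T_{\tors}(\Kbar)$ is a unit at every place, so its tropicalization $\mathrm{trop}_v(\zeta)$ vanishes at every place $v$. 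By construction (Proposition~\ref{prop:mrho}) the $\T$-linearization of $\arho(\ov{D})$ is an isometry over the preimage of $0 \in N_\bbR$ at each place, so translation by $\zeta$ is an isometry of $\arho(\ov{D})$ after base changing to $K(\zeta)$. Consequently $\mu_\zeta^*\ov{\sL} \cong \ov{\sL}$ as adelic line bundles and $h_{\ov{\sL}}(\zeta x) = h_{\ov{\sL}}(x)$, establishing $\zeta \cdot S_\lambda = S_\lambda$.

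Next I would promote this to $\T$-invariance of $\toricbun_\lambda = \overline{S_\lambda}$ by a density argument. Consider the action morphism $\mu:\T \times \toricbun \to \toricbun$. Its preimage $\mu^{-1}(\toricbun_\lambda)$ is Zariski closed in $\T \times \toricbun$ and, by the previous paragraph, contains $\T_{\tors}(\Kbar) \times S_\lambda$. Because $\T_{\tors}(\Kbar)$ is Zariski dense in the split torus $\T$, and because $S_\lambda$ is dense in each irreducible component of $\toricbun_\lambda$ (as $\toricbun_\lambda$ is by definition its closure), the standard fact that $\overline{A \times B} = \overline{A} \times \overline{B}$ for any subsets of varieties gives that the Zariski closure of $\T_{\tors}(\Kbar) \times S_\lambda$ equals $\T \times \toricbun_\lambda$. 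Hence $\T \times \toricbun_\lambda \subseteq \mu^{-1}(\toricbun_\lambda)$, i.e.\ $\toricbun_\lambda$ is $\T$-stable.

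The only non-formal step is the isometry in the first paragraph. It requires tracking the construction of $\mrho$ in Proposition~\ref{prop:mrho} to see that both the toric contribution and the twist from the underlying torus bundle $\aTbun$ depend on fibrewise coordinates only through their tropicalization; the "isometry over $0$" condition built into the $\T$-linearization of metrized vector bundles, applied simultaneously at every place, is exactly tailored to torsion elements, which are precisely those with trivial tropicalization everywhere.
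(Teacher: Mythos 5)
Your proof is correct and follows the same two-step strategy as the paper: first establish height-invariance under torsion points of $\T$ via the fact that torsion points tropicalize to $0$ at every place, then promote to $\T$-invariance of the closure $\toricbun_\lambda$ using Zariski density of $\T_{\tors}(\Kbar)$ in $\T$. The only cosmetic difference is in how you package the density step: the paper fixes $x\in S_\lambda$ and argues that the preimage of $\toricbun_\lambda$ under the orbit map $\T\to\toricbun$ is closed and contains the dense $\T_{\tors}$, then closes the $\T(\Kbar)$-invariant set $\T(\Kbar)S_\lambda$; you instead close the product $\T_{\tors}(\Kbar)\times S_\lambda$ inside $\mu^{-1}(\toricbun_\lambda)$ in one shot, which is logically equivalent and arguably slightly cleaner. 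Your appeal to the ``isometry over $0$'' clause in the $\T$-linearization of metrized bundles from Proposition~\ref{prop:mrho} also makes explicit what the paper's terse ``this is checked fibrewise'' leaves to the reader.
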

\begin{proof}
    It is enough to prove that $\toricbun_\lambda(\Kbar)$ is stable under the action of $\T(\Kbar)$ as reduced schemes are determined by their $\Kbar$-points. The height is invariant under the action of torsion points of $\T$. This is checked fibrewise. Torsion points of $\T$ tropicalize to $0$ at all places. Hence, their action does not affect the tropicalization of points in the fibre $\toricvar_\Sigma$ and thus their height.
    
    Since torsion points are Zariski dense in $\T$, it follows that $\toricbun_\lambda$ contains the $\T(\Kbar)$-orbit of any $x \in \toricbun(\Kbar)$. In particular, we may view $\toricbun_\lambda(\Kbar)$ as the closure of the $\T(\Kbar)$-invariant set $\T(\Kbar)\{ x \in \toricbun(\Kbar)\mid  h_{\ov{\sL}}(x) \leq \lambda\}$. The claim now follows from the fact that the action map $\T(\Kbar)\times \toricbun(\Kbar) \to \toricbun(\Kbar)$ is continuous.
\end{proof}

It is natural to consider $\Tbun_\lambda = \ov{\{ x \in \Tbun(\Kbar)\mid  h_{\ov{\sL}}(x) \leq \lambda\}}$ in $\Tbun$, where $\Tbun$ denotes the underlying torus bundle. Then, the resulting filtration
\[
    \Tbun_0 = \emptyset \subsetneq \Tbun_1 \subsetneq \dots \subsetneq \Tbun_r = \Tbun
\]
is the pullback of a filtration on $\B$ which we call the toric filtration. The following equality holds
\[
    \Tbun_\lambda = \pi^{-1}( \ov{\{ b \in \B(\Kbar)\mid \exists x \in \pi^{-1}(b), h_{\ov{\sL}}(x) \leq \lambda\}}).
\]

One can recover the height filtration on $\toricbun$ from filtrations on $\B$ associated to each cone on the underlying fan $\Sigma$. For every cone $\sigma \in \Sigma$ we denote the associated closed toric subbundle by $\toricbun_{\sigma}$ and the associated torus bundle by $\Tbun_\sigma$. We introduce the filtrations
\[
    \B_{\sigma,\lambda} = \ov{\{ b \in \B(\Kbar)\mid \exists x \in \pi^{-1}(b)\cap{\toricbun_\sigma}, h_{\ov{\sL}}(x) \leq \lambda\}}.
\]
Then we obtain the height filtration on $\toricbun$ as
\[
    \toricbun_\lambda = \bigcup_{\sigma \in \Sigma} \pi^{-1}(\B_{\sigma, \lambda}) \cap \toricbun_\sigma.
\]

The last step remaining is to understand the filtrations $\B_{\sigma,\lambda}$. We have additional tools available to study fibres of toric bundles since they are toric varieties. For a point $b\in\B(\Kbar)$, we may consider the fibre $\toricbun_b$. Let $s:\toricvar\cong \toricbun_b$ be a trivialization. Here $\toricvar$ is a toric variety with torus $\T\subset \toricvar$.

Let $\ov{D}$ be a toric metrized Cartier divisor on $\toricvar$. Then, $\mrho_{\toricbun_b}(\ov{D})$ is the translate of $\ov{D}$ by $|s|^{-1} \in \T^{trop}$, when we trivialize the torsor by $s$. In particular, if $\ov{D}$ is semipositive the local roof function $\theta(\arho(\ov{D}))$ is $\theta(\ov{D})- \log |s|$ by \cite[Proposition 2.3.3]{toricheights}, where $\log|s|$ is the linear function $M\to \bbR$ defined by the norm of $s$.

If $\ov{D}$ is a semipositive adelic toric divisor its global roof function is $\theta(D)(m) + h_{\aTbun(m)}(b)$. This allows us to apply the results of \cite{sucmin} to fibers of $\pi$. We introduce the function $h_\Delta$ on $\B$ given by $h_\Delta(b)=\max_{m \in \Delta}\theta(b,m)$. The study of the toric filtration boils down to the study of $h_\Delta$ and the filtration by $\ov{\{ b \in \B(\Kbar)\mid  h_{\Delta}(b) \leq \lambda\}}$.

\section{\label{sec:okounkov}Okounkov bodies of toric bundles and applications}

Let $\B$ be a variety over $K$ of dimension $g$. Let $\toricbun$ be a toric bundle over $\B$ with underlying torus bundle $\Tbun$ and fan $\Sigma$. Let $\Delta$ be a polytope whose normal fan coarsens $\Sigma$ and $\rho(\Delta)$ the corresponding line bundle on $\toricbun$. Let $L$ be a line bundle on $\B$. We next compute the Okounkov body of $\rho(\Delta) + \pi^*L$.

In order to obtain a more convenient flag we refine $\Sigma$ such that it defines a smooth projective variety, see \cite[Chapter 11]{Cox_Little_Schneck_Toric}. Since the fibres $\toricvar_\Sigma$ are smooth we can take prime toric divisors $D_1,\dots,D_t$ such that
\[
\toricvar_\bullet:\toricvar_\Sigma\supset D_1 \supset D_1 \cap D_2 \supset\dots \supset D_1\cap \dots \cap D_t =\{p\}.
\]
defines a flag. We obtain a partial flag on $\toricbun$:
\[
\rho(\toricvar_\bullet):\toricbun \supset \rho(D_1) \supset \rho(D_1) \cap \rho(D_2) \supset\dots \supset \rho(D_1)\cap \dots \cap \rho(D_t) \cong \B.
\]
Given a flag $\B_\bullet$ on $\B$, we extend $\rho(\toricvar_\bullet)$ to a flag $\toricbun_\bullet$ on $\toricbun_\Sigma$. We now compute the Okounkov body of $\rho(\Delta) + \pi^*L$ with respect to $\toricbun_\bullet$.

By translating $\Delta$, we can always ensure that the conditions in Proposition \ref{prop:toric_okounkov} are satisfied. In the toric bundle setting this requires to change the line bundle $L$.

\begin{theorem}\label{thm:geometric_okounkov_body}
    The Okounkov body of $\toricbun$ with respect to $\rho(\Delta) + \pi^*L$ is given by the closure of
    \[
        \{(m,x)\mid m \in \Delta, x \in \Delta_{B_\bullet}(L+\Tbun(m))\}.
    \]
    For $m \in \Delta$ such that $L+\Tbun(m)$ is big the fibre is given by $\Delta_{B_\bullet}(L+\Tbun(m))$.
\end{theorem}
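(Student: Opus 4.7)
The plan is to compute the Okounkov body directly from the definition, by determining the valuations of global sections of $n(\rho(\Delta)+\pi^{*}L)$ under the flag $\toricbun_{\bullet}$. I would begin by combining Lemma \ref{prop:geometric_global_sections} with the projection formula to obtain a canonical eigenspace decomposition
\[
H^0(\toricbun,n\rho(\Delta)+n\pi^{*}L) \;\cong\; \bigoplus_{m \in n\Delta \cap M} H^0(\B,\,nL+\Tbun(m)),
\]
so that every global section writes uniquely as $\sigma=\sum_{m}\chi^{m}\otimes s_{m}$ with $s_{m}\in H^0(\B,nL+\Tbun(m))$. This is the relative analogue of the eigenspace decomposition exploited in the purely toric computation of Proposition \ref{prop:toric_okounkov}.

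Next, I would identify the valuation of a pure tensor $\chi^{m}\otimes s$. Trivialising the torus bundle on a Zariski neighbourhood of the base point of $B_{\bullet}$ produces a local product structure $\toricbun \cong X_{\Sigma,\mathrm{loc}}\times \B_{\mathrm{loc}}$ in which $\toricbun_{\bullet}$ is exactly the concatenation of the toric flag $X_{\bullet}$ (on the first factor) and the base flag $B_{\bullet}$ (on the second factor). Choosing local parameters $(x_{1},\dots,x_{t},y_{1},\dots,y_{g})$ compatibly with this splitting and invoking Proposition \ref{prop:toric_okounkov} to compute the toric-flag valuation of $\chi^{m}$, one obtains $\nu_{\toricbun_{\bullet}}(\chi^{m}\otimes s)=(m,\nu_{B_{\bullet}}(s))\in \bbZ^{t}\oplus \bbZ^{g}$.

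Because the first $t$ coordinates of the valuation depend injectively on $m$, different eigencomponents cannot cancel at the level of lex-order: the lex-minimum in the Taylor expansion of a general $\sigma=\sum_{m}\chi^{m}\otimes s_{m}$ is attained at the lex-smallest $m_{0}$ with $s_{m_{0}}\neq 0$, so $\nu_{\toricbun_{\bullet}}(\sigma)=(m_{0},\nu_{B_{\bullet}}(s_{m_{0}}))$. Hence the image of $H^0(\toricbun,n(\rho(\Delta)+\pi^{*}L))\setminus\{0\}$ under $\nu_{\toricbun_{\bullet}}$ equals
\[
\bigcup_{m \in n\Delta \cap M} \{m\}\times\nu_{B_{\bullet}}\bigl(H^0(\B,nL+\Tbun(m))\setminus\{0\}\bigr).
\]
Rescaling by $1/n$, taking the union over $n\geq 1$, and passing to the closure yields the claimed set: for each $m_{0}\in\Delta\cap M_{\bbQ}$ the fibre of the Okounkov body over $m_{0}$ is the closure of $\tfrac{1}{n}\nu_{B_{\bullet}}(H^0(\B,nL+n\Tbun(m_{0})))$ as $n$ ranges over the multiples clearing the denominator, which is by definition $\Delta_{B_{\bullet}}(L+\Tbun(m_{0}))$ whenever $L+\Tbun(m_{0})$ is big.

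The principal obstacle I expect is the treatment of points $m \in \Delta$ where $L+\Tbun(m)$ fails to be big: the fibre over such $m$ need not literally agree with $\Delta_{B_{\bullet}}(L+\Tbun(m))$ (which may be lower-dimensional or empty in the natural formulation), and arises instead as a limit of fibres over nearby rational points where bigness holds. This is precisely why the theorem is stated in terms of the closure; once attention is restricted to the big locus, the fibrewise identification is immediate from the definition of the Okounkov body on the base.
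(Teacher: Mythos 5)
Your proposal follows essentially the same strategy as the paper's proof: decompose the global sections via Lemma \ref{prop:geometric_global_sections} into $\chi^m$-eigenspaces, identify the valuation of a pure eigensection as the concatenation of $m$ with $\nu_{B_\bullet}$ on the base, observe that distinct eigenspaces have distinct leading $t$-coordinates so no cancellation occurs, and then rescale and close up. That part is correct and matches the paper's computation of the valuation image.

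There is one genuine gap, at exactly the place you hand-wave. You assert that for rational $m_0$ with $L+\Tbun(m_0)$ big, the fibre of the Okounkov body over $m_0$ ``is by definition'' the closure of $\bigcup_n \frac{1}{n}\nu_{B_\bullet}(H^0(\B,n(L+\Tbun(m_0))))$, and later that on the big locus the fibrewise identification is ``immediate.'' This is not immediate: what you have computed is the \emph{closure of the fibre} of the valuation-image set, but the theorem asserts something about the \emph{fibre of the closure}, and in general the latter can be strictly larger (limits of points $(m_k,x_k)$ with $m_k\to m_0$, $m_k\neq m_0$, may contribute new points over $m_0$). The paper closes this gap by invoking the convexity of Okounkov bodies as a function of the line bundle \cite[Section 4.2]{lazmus}: convexity of the total body forces the fibre over any $m$ in the big locus to be no larger than $\Delta_{B_\bullet}(L+\Tbun(m))$. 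You correctly flag the subtlety for non-big $m$, but the same issue is present over big $m$ and needs to be addressed there too; without the convexity (or some semicontinuity) input, the second assertion of the theorem is not established.
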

\begin{proof}
    Just as for complete flags one can associate a valuation $\nu_{\rho(\toricvar_\bullet)}$ to the partial flag $\rho(\toricvar_\bullet)$ as the first $t$ entries of the valuation associated to $\toricbun_\bullet$. By definition, the Okounkov body of $\rho(\Delta) + \pi^*L$ is the closure of the family of Okounkov bodies over $m\in \Delta\cap M_\bbQ$ of the linear series
\[
\im\left(\{s \in H^0(\toricbun, n(\pi^*L+\rho(\Delta))) \mid  \nu_{\rho(\toricvar_\bullet)}(s) \geq nm\} \overset{\alpha}{\to} H^0(X,n(L+\Tbun(m)))\right).
\]
We can understand this by applying Lemma \ref{prop:geometric_global_sections}. For $m\notin \Delta$ the image will be $0$ since then the domain of the map is $0$. On the other hand if $m \in \Delta \cap M_\bbQ$, the $\chi^m$-equivariant sections are in bijection with $H^0(\B, L + \Tbun(m))$. All non-zero $\chi^m$-equivariant sections $s$ satisfy $\nu_{\rho(\toricvar_\bullet)}(s) = m$. Therefore, the map $\alpha$ is surjective. Hence, the map of Okounkov bodies $\Delta_{\toricbun_\bullet}(\pi^*L+\rho(\Delta)))$ has fibres containing $\Delta_{B_\bullet}(L+\Tbun(m))$. For fixed $m \in \Delta$ such that $L+\Tbun(m)$ is big, the fibre of the closure of $\{(m,x)\mid m \in \Delta, x \in \Delta_{B_\bullet}(L+\Tbun(m))\}$ doesn't contain points outside $\Delta_{B_\bullet}(L+\Tbun(m))$ by the convexity of Okounkov bodies as a function of the line bundle, see \cite[Section 4.2]{lazmus}.
\end{proof}

\subsection{Arithmetic version}

Suppose that in addition to the setup we have an adelic structure $\aTbun$ on $\Tbun$. Denote by $\ac:M\to \aPic(\B)$ the homomorphism describing the isomorphism class of $\aTbun$. Suppose further that we have a semipositive toric adelic metric $\ov{D}$ with roof function $\theta$ on the divisor associated to $\Delta$ and that $\ov{L}$ is adelically metrized.

\begin{theorem}\label{thm:arithmetic_okounkov_body}
    The Boucksom-Chen transform $G_{\pi^*\ov{L}+\arho(\ov{D}),\toricbun_\bullet}(m,x)$ on 
    \[
        \{(m,x)\mid m \in \Delta, x \in \Delta_{B_\bullet}(L+\Tbun(m))\}
    \]
    is given by $\theta(m) + G_{\ov{L}+\aTbun(m),B_\bullet}(x)$ when $L+\Tbun(m)$ is big.
\end{theorem}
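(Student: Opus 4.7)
The plan is to extend Theorem \ref{thm:geometric_okounkov_body} by tracking adelic norms on the filtered linear series through the orthogonal eigenspace decomposition of Lemma \ref{prop:metrized_global_sections}. I would first fix rational $m_0 \in \Delta \cap M_\bbQ$ such that $\ov{L}+\aTbun(m_0)$ is big and $x_0$ in the interior of $\Delta_{\B_\bullet}(\ov{L}+\aTbun(m_0))$. For $n$ such that $nm_0 \in M$, Lemma \ref{prop:metrized_global_sections} will give
\[
H^0(\toricbun, n(\pi^*\ov{L}+\arho(\ov{D}))) = \bigoplus_{m \in n\Delta \cap M} V_{n,m},
\]
as an orthogonal direct sum with each $V_{n,m}$ isometric to $H^0(\B, n\ov{L}+\aTbun(m))$ under an additive shift of the metric by $n\theta(m/n)$. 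Consequently, adelic heights of sections in $V_{n,m}$ will be shifted by exactly $n\theta(m/n)$ relative to heights in $H^0(\B, n\ov{L}+\aTbun(m))$.

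Mirroring the proof of Theorem \ref{thm:geometric_okounkov_body}, I would use that $\nu_{\rho(\toricvar_\bullet)}$ separates eigenspaces: nonzero sections in $V_{n,m}$ have first $t$ valuation coordinates exactly $m$. Hence the subspace of sections with $\nu_{\toricbun_\bullet} \geq n(m_0, x_0)$ would contain the image of $\chi^{nm_0}\cdot \{s \in H^0(\B, n\ov{L}+\aTbun(nm_0)) : \nu_{\B_\bullet}(s) \geq nx_0\}$, plus additional contributions from $V_{n,m}$ with $m \succ nm_0$ in lex order. Since the metric shift on $V_{n,nm_0}$ is uniform, the asymptotic height filtration on this piece will translate to a shift of the Boucksom-Chen transform by $\theta(m_0)$; off-diagonal contributions with $m/n \to m_0$ will yield the same shift by continuity of $\theta$, while those with $m/n$ converging to a strictly larger limit will contribute to different points of the Okounkov body. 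Passing to the limit $n \to \infty$ should give
\[
G_{\pi^*\ov{L}+\arho(\ov{D}), \toricbun_\bullet}(m_0, x_0) = \theta(m_0) + G_{\ov{L}+\aTbun(m_0), \B_\bullet}(x_0)
\]
for rational $(m_0, x_0)$. The irrational case I would handle by continuity: both sides are concave and upper semicontinuous in $(m, x)$, $\theta$ is continuous, and the base Boucksom-Chen transform depends continuously on $m$ on the bigness locus.

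The hard part is expected to be twofold. First, Theorem \ref{thm:orthogonality_toric} provides exact orthogonality only at non-archimedean places; at archimedean places the upper bound involves the factor $\#(n\Delta \cap M)$, producing an $O(\log n)$ height discrepancy that must be tracked through the filtration $\Delta^t$ to confirm that it vanishes after dividing by $n$. Second, I will need to verify that off-diagonal contributions $V_{n,m}$ with $m/n$ approaching $m_0$ but $m \neq nm_0$ do not inflate the Boucksom-Chen transform at $(m_0, x_0)$ beyond the claimed value; this should follow from the concavity of $\theta$ together with upper semicontinuity of $m \mapsto G_{\ov{L}+\aTbun(m), \B_\bullet}(x_0)$.
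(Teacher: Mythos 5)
Your plan follows essentially the same route as the paper's proof: both reduce to Lemma \ref{prop:metrized_global_sections}, use that the valuation separates eigenspaces, and then close via continuity/convexity of arithmetic Okounkov bodies in families. The paper phrases this as computing the quotient norm on $\im(\alpha)$ and observing that it equals the sup norm twisted by $\theta_v(m)$; your eigenspace-by-eigenspace bookkeeping amounts to the same thing.

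One small calibration: the $O(\log n)$ discrepancy you flag at archimedean places does not actually enter. For the relevant quotient norm computation, the only direction of orthogonality needed is the one Theorem \ref{thm:orthogonality_toric} gives exactly at all places, namely $\|s\|_{\sup} \geq \max_m \|\gamma_m\chi^m\|_{\sup}$. Indeed, for $w \in H^0(\B, n(L+\Tbun(nm_0)))$, any lift $s$ with $\alpha(s)=w$ satisfies $\|s\| \geq \|w\chi^{nm_0}\|$ by this inequality, while the equivariant lift $s=w\chi^{nm_0}$ achieves equality; hence the quotient norm equals the twisted sup norm on the nose, with no archimedean loss. The constant $\#(n\Delta\cap M)$ appears only in the opposite inequality (bounding the sup norm of a full section from above in terms of its components), which is not used here. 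Your second concern — off-diagonal eigenspaces with $m/n\to m_0$ — is real but is exactly what the paper's appeal to upper semicontinuity and convexity of the arithmetic Okounkov body in families (and concavity of $\theta$) takes care of; your plan handles it the same way.
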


\begin{proof}
    The vector space
\[
\im\left(\{s \in H^0(\toricbun, n(\pi^*\ov{L}+\arho(\ov{D}))) \mid  \nu_{\rho(\toricvar_\bullet)}(s) \geq nm\} \overset{\alpha}{\to} H^0(\B,n(L+\Tbun(m)))\right).
\]
can be endowed with quotient norms at each, where the left hand side endowed with the supremum norm. The arithmetic Okounkov body of $\ov{L}+\arho(\ov{D})$ is the closure of the arithmetic Okounkov bodies for all the filtered graded linear series. By Lemma \ref{prop:metrized_global_sections}, the quotient norm for $\alpha$ agrees with the supremum norm on $H^0(X,n(\ov{L}+\aTbun(m)))$ twisted by $\theta_v(m)$.

By the convexity of arithmetic Okounkov bodies in families the theorem follows.
\end{proof}

\begin{remark}
    Due to the work of Sombra and Ballaÿ one will be able to deduce equidistribution of small points on some line bundles of the form $\arho(\ov{\Delta}) + \pi^*\ov{L}$ as they can phrase their sufficient condition on equidistribution in terms of the Boucksom-Chen transform, see \cite[Section 1.4]{ballaÿ2024approximationadelicdivisorsequidistribution}.
\end{remark}

\subsection{Application to successive minima}
In this section, we prove Theorem \ref{thm:minima} by studying the Boucksom-Chen transform.

\minima*

\begin{proof}
    If $m$ is such that $\ov{L} + \ac(m)$ is geometrically big, this holds in a neighborhood of $m$. It follows by Theorem \ref{thm:geometric_okounkov_body} that $\arho(\ov{\Delta}) + \pi^*\ov{L}$ is geometrically big. We can therefore apply Theorem \ref{thm:ballay} to compute the essential minimum in terms of the Boucksom-Chen transform. Recall that the Okounkov body of $\pi^*\ov{L}+\arho(\ov{D})$ maps to $\Delta$. Over $m \in \Delta$, the fibre can be identified with $\Delta_{B_\bullet}(L+\Tbun(m))$ and the restriction of the Boucksom-Chen transform $G_{\pi^*\ov{L}+\arho(\ov{D}),\toricbun_\bullet}$ is given by $\theta(m) + G_{\ov{L}+\aTbun(m),B_\bullet}(x)$. Hence, over each such fibre the maximum of the Boucksom-Chen function is $\zeta_{\ess}(\ov{L} + \ac(m))+\theta(m)$ by Theorem \ref{thm:ballay}.
    
    Let $\Delta^{\text{big}}$ denote the locus of $m \in \Delta$ such that $L+c(m)$ is big. By concavity and upper semicontinuity of the Boucksom-Chen transform and concavity of the essential minimum it follows that 
    \begin{align*}
        \zeta_{\ess}(\arho(\ov{\Delta}) + \pi^*\ov{L})&=\max_{(m,x) \in \Delta_{\toricbun_\bullet}(\pi^*L+\rho(D))} G_{\pi^*\ov{L}+\arho(\ov{D}),\toricbun_\bullet}(m,x)\\ &=\sup_{m\in\Delta^{\text{big}}}\zeta_{\ess}(\ov{L} + \ac(m))+\theta(m)\\ &= \sup_{m\in\Delta}\zeta_{\ess}(\ov{L} + \ac(m))+\theta(m).
    \end{align*}
    We now assume that in addition $\ov{L} + \ac(m)$ is semipositive. In order for $L + c(m)$ to admit semipositive metrics it has to be nef. Hence, the Hilbert-Samuel theorem \cite[Proposition 1.31]{debarre_higher_dimensional_alggeo} holds. Adding an arbitrarily small multiple of a big nef line bundle yields a big nef divisor. Hence, $L + c(m)$ is pseudoeffective for all $m\in\Delta$ and big for all $m$ in the interior $\Delta^\circ$.

    By the upper semi-continuity of the Boucksom-Chen transform and applying Theorem \ref{thm:ballay} to each fibre, we obtain
    \begin{align*}
        \zeta_{\abs}(\arho(\ov{\Delta}) + \pi^*\ov{L})&=\inf_{(m,x) \in \Delta_{\toricbun_\bullet}(\pi^*L+\rho(D))} G_{\pi^*\ov{L}+\arho(\ov{D}),\toricbun_\bullet}(m,x)\\ &=\inf_{m\in\Delta^\circ}\zeta_{\abs}(\ov{L} + \ac(m))+\theta(m).
    \end{align*}
\end{proof}

\section{\label{hkm}Arithmetic bundle BKK}

The purpose of this section is to prove the arithmetic bundle BKK theorem stated in the introduction. Its proof will follow the outline of the proof of \cite[Theorem 4.1]{toricbundles}. Let us swiftly recall the statement of the theorem.

Let $\atoricbun$ be an adelic integrable projective toric bundle of relative dimension $t$ with smooth generic fibre over a smooth projective base variety $\B$ of dimension $g$. Let $\sB$ be a flat, projective model of $\B$ over $\sS$. Let $A^*(\blank)$ denote the operational arithmetic Chow cohomology introduced in Section \ref{subsec:chow_homology}. Let $\gamma \in A^{g+1-i}(\sB)$ and denote by $[\infty]\in \ArChow^1(\sB)_\bbR$ the class of a trivial Cartier divisor endowed with constant Green's functions at all places such that $h_{[\infty]}(x) = 1$ for all $x \in \B(\Kbar)$.

\arithmeticbundlebkk*

\begin{remark}
    The above intersection numbers are shown to be well-defined in Section \ref{subsec:chow_homology}. In fact, by approximation the formula also holds for expressions $\gamma$ of the form $\ac_1(\sL_1)\dots\ac_1(\sL_r)\gamma'$ for integrable adelic line bundles $\sL_1,\dots,\sL_r$ on $\B$ and $\gamma' \in \A^{g+1-i-r}(\sB)$. When $\gamma$ is the product of first Chern classes of integrable line bundles the result follows from Section \ref{sec:okounkov}.

    One can prove the statement in further situations. When the torus bundle has a model over $\sB$, the result holds for $\gamma \in \ArChow_i(\sB)$. For a result of similar homological flavour in the general case one may use $b$-cycles satisfying positivity conditions as introduced in Section \ref{subsec:chow_homology}.
\end{remark}

We introduce shorthand notations for use in the course of the proof. Let
\[
    \aI_\gamma:\aP^+ \to \bbR,\ \ov{\Delta} \mapsto \int_\Delta (\widehat{c}(m) + \theta(m)[\infty])^i \cdot \gamma dm
\]
and
\[
    \aF_\gamma:\aP \to \bbR,\ \ov{\Delta} \mapsto \adeg(\arho(\ov{\Delta})^{t+i}\cdot \pi^*\gamma).
\]
The function $\aF_\gamma$ is well-defined as the intersection number does not change under birational modification. We eventually extend $\aI_\gamma$ to the space of all virtual polytopes. Using the introduced shorthand, the theorem is stated below.

\begin{theorem}
  \label{bkk}The polynomials $\aI_\gamma$ and $\aF_\gamma$ satisfy
\eqn{(t+i)!\cdot \aI_\gamma(\ov{\Delta}) = i! \cdot \aF_\gamma(\ov{\Delta}).}
In particular, the polarizations of $\aI_\gamma$ and $\aF_\gamma$ are proportional multilinear forms, i.e.\ for any $\ov{\Delta}_1, . . . , \ov{\Delta}_{t+i} \in \aP_\Sigma$
\eqn{(t+i)!\cdot \aI_\gamma(\ov{\Delta}_1, . . . , \ov{\Delta}_{t+i}) = i! \cdot \aF_\gamma(\ov{\Delta}_1, . . . , \ov{\Delta}_{t+i}).}  
\end{theorem}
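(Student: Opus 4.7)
My plan is to follow the blueprint of the geometric bundle BKK theorem \cite[Theorem 4.1]{toricbundles}, extended to the arithmetic setting via arithmetic convex chains, as announced at the start of this section. Both functionals $\aI_\gamma$ and $\aF_\gamma$ are polynomial of total degree $t+i$ in $\ov{\Delta}$: $\aF_\gamma$ by multilinearity of the arithmetic intersection pairing from Proposition~\ref{prop:intersect_integrable_line_bundles_with_cycles} combined with the linearity of $\arho$, while $\aI_\gamma$ is manifestly polynomial by expanding the integrand. By polarization of polynomial identities, proving the stated equality for every $\ov{\Delta}$ is equivalent to proving equality of the associated symmetric multilinear forms in $t+i$ virtual polytopes.

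First I would fix a smooth projective adelic fan $\widetilde{\Sigma}$ refining the normal fans of all relevant polytopes, and restrict to $\aP_{\widetilde{\Sigma}}$. The space $\aP_{\widetilde{\Sigma}}$ is generated under Minkowski sum by ``elementary pieces'' indexed by the rays of $\widetilde{\Sigma}$, with relations coming from linear dependencies among those rays. The key step is to show that both polarized functionals $\aI_\gamma$ and $\aF_\gamma$ satisfy the same vanishing property: evaluated on dual polytopes $h_{\tau_1}, \ldots, h_{\tau_{t+i}}$ attached to rays $\tau_1, \ldots, \tau_{t+i}$ that do not together span a cone of $\widetilde{\Sigma}$, both forms vanish. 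For $\aF_\gamma$ this is precisely the second version of Proposition~\ref{empty_intersection}. For $\aI_\gamma$ it follows from the combinatorial observation that the relevant Minkowski sum is then contained in a proper affine subspace, so the integrand becomes a polynomial of strictly lower degree in one of its variables than the form degree demands, forcing the integral to vanish.

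Given this shared vanishing, both functionals are determined by their values on elementary convex chains supported on a single maximal cone $\sigma = \langle \tau_1, \ldots, \tau_{t+i}\rangle$ of $\widetilde{\Sigma}$. On such a chain, one evaluates explicitly: on the $\aF$-side, a local trivialization of $\aTbun$ over a chart adapted to $\sigma$ and passage to a smooth toric model identifies the intersection $\arho(h_{\tau_1})\cdots\arho(h_{\tau_{t+i}})\pi^*\gamma$ with the intersection of $\pi^*\gamma$ against a monomial in pullbacks from $\sB$ of the divisor classes $\ac(m_j) + \theta(m_j)[\infty]$. On the $\aI$-side, the integral of the polarized integrand over the simplex dual to $\sigma$ produces the same product of classes with a combinatorial factor of $1/(t+i)!$, which exactly matches the $i!/(t+i)!$ normalization. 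The geometric bundle BKK theorem applied after setting $\theta_v \equiv 0$ at all places pins down that no further constant is needed.

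The main obstacle will be the treatment of operational classes $\gamma \in A^{g+1-i}(\sB)$: unlike the classical geometric case one cannot simply pull $\gamma$ back to a cycle on the toric bundle and apply a projection formula. Instead, I would use the defining compatibility of operational classes with proper pushforward and lci pullback to carry out the computation on suitable smooth models place by place, combined with the approximation of integrable adelic torus bundles by algebraic ones used in Proposition~\ref{empty_intersection}. A secondary difficulty is the archimedean part of the vanishing step: one must choose smooth representatives of the Green currents of the $\arho(h_{\tau_j})$ so that the product of curvature forms is genuinely annihilated against $\pi^*\gamma$ when the rays fail to span; the Green functions coming from adelic polytopes have an explicit piecewise-linear shape that makes this transparent, but the verification must be done carefully in mixed configurations of archimedean and non-archimedean rays before the reduction to elementary chains closes the argument.
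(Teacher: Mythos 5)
Your overall blueprint matches the paper's: both reduce to comparing the two homogeneous degree-$(t+i)$ polynomials on a fixed simplicial adelic fan $\widetilde{\Sigma}$, use the vanishing statement (Proposition~\ref{empty_intersection}) for non-spanning rays, and an explicit local computation at a vertex for spanning rays. But there is a dimensional obstruction in your reduction that the paper handles with an extra idea you do not mention, and this is a genuine gap.

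The problem is in the step \emph{``both functionals are determined by their values on elementary convex chains supported on a single maximal cone $\sigma = \langle \tau_1, \ldots, \tau_{t+i}\rangle$ of $\widetilde{\Sigma}$.''} At any given place, a maximal cone of $\widetilde{\Sigma}$ lives in $N_\bbR \oplus \bbR_{\geq 0}$, so it is $(t+1)$-dimensional and, being simplicial, has exactly $t+1$ rays. When $i \geq 2$, there is no cone spanned by $t+i$ rays, so the polarized $(t+i)$-linear forms cannot be pinned down merely by evaluating on $t+i$ distinct rays plus the vanishing lemma: after removing the vanishing non-spanning tuples, the only non-trivial tuples have repeated rays. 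Those repeated tuples correspond to what the paper calls partial derivatives of positive multiplicity $\mu$, and it is precisely here that a direct local computation at a vertex (of the kind in Lemma~\ref{lemmF} / Corollary~\ref{cor:aI}) no longer applies. The paper instead compares only partial derivatives of order $t+1$ (not $t+i$), which suffices because the difference would then be a homogeneous polynomial of degree $t+i>t$ with all order-$(t+1)$ derivatives zero, hence zero; and to handle the repeated-ray derivatives it runs a double induction on $i+\mu$, writing $\partial_1 = L_v - \sum_{j>r}\langle v,e_j\rangle \partial_j$ and showing that the Lie derivative $L_v$ acts on $\aI_\gamma$ and $\aF_\gamma$ by replacing $\gamma$ with $\widehat{c}(v)\gamma$ and lowering $i$ by one. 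Your proposal has no analogue of this reduction, so the case $i\geq 2$ (and more generally any repeated-index derivative) does not close. You would also need to address the base case $i = 0$ explicitly, which the paper does by a separate direct argument tying to the classical BKK theorem, and to justify the extension of $\aI_\gamma$ to virtual adelic polytopes as a polynomial (Theorem~\ref{thm:arithmetic_polynomial_extension} / Corollary~\ref{cor:aIispolynomial}), which you treat as ``manifest'' but actually requires the Pukhlikov–Khovanskii machinery.
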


This closely resembles the BKK-type theorem in \cite{toricbundles}. Let us recall their statement for context. Let $\toricbun \to \B$ be a toric bundle over a smooth compact oriented $\bbR$-manifold $\B$ with smooth underlying fan $\Sigma$. Analogous to our situation there are maps $c:M \to H^2(\B,\bbZ)$ and $\rho:\calP_\Sigma \to H^2(\B,\bbZ)$. Furthermore, the toric bundle $\toricbun$ has an induced orientation. Hence, the top cohomology groups will be identified with $\bbR$.

\begin{theorem}[Theorem 4.1 \cite{toricbundles}]
    Let $\gamma\in H^{k-2i}(\B,\bbR)$. Then, the cup product satisfies
    \[
        i!\rho(\Delta)^{t+i}\pi^* \gamma = (t+i)! \int_\Delta c(m)^i\cdot \gamma dm.
    \]
\end{theorem}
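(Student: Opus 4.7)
The plan is to reduce the identity via the projection formula to a cohomological statement on $\B$ and then establish that statement by combining the classical toric BKK theorem on fibres with the ``twist'' encoded by the map $c$. Since $\pi\colon\toricbun\to\B$ is a locally trivial fibration whose fibre $\toricvar_\Sigma$ is a smooth projective toric variety with cohomology generated by algebraic classes, the Leray--Hirsch theorem yields an isomorphism $H^*(\toricbun;\bbR) \cong H^*(\B;\bbR) \otimes H^*(\toricvar_\Sigma;\bbR)$ of $H^*(\B)$-modules, and the standard presentation of $H^*(\toricvar_\Sigma)$ globalises to the ring presentation
\[
H^*(\toricbun;\bbR) = H^*(\B;\bbR)[y_1,\dots,y_r]/(\mathrm{SR}+\mathrm{TLR}),
\]
where $\mathrm{SR}$ consists of the Stanley--Reisner relations for $\Sigma$ and the twisted linear relations $\mathrm{TLR}$ read $\sum_j\langle m,v_j\rangle y_j = \pi^*c(m)$ for every $m\in M$. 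Under this presentation $\rho(\Delta) = \sum_j a_j(\Delta)\,y_j$, with $a_j(\Delta)$ the coefficients of the support function of $\Delta$.

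By the projection formula, $\rho(\Delta)^{t+i}\pi^*\gamma = \pi_*(\rho(\Delta)^{t+i})\cdot\gamma$, so the theorem reduces to proving
\[
i!\,\pi_*\bigl(\rho(\Delta)^{t+i}\bigr) \;=\; (t+i)!\int_\Delta c(m)^i\,dm \quad\text{in } H^{2i}(\B;\bbR).
\]
The fibre integration $\pi_*$ is $H^*(\B)$-linear, vanishes on monomials in the $y_j$ of total degree $<t$, and on monomials of degree exactly $t$ supported on a top-dimensional cone of $\Sigma$ returns $1$. In particular, the case $i=0$ reads $\pi_*\rho(\Delta)^t = t!\vol(\Delta)$, which is classical BKK applied fibrewise.

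For $i>0$, the key step is to convert the $i$ extra factors of $\rho(\Delta)$ into pullbacks from the base using the twisted linear relations. A hands-on way is to note that translating $\Delta\mapsto\Delta+v$ alters $\rho(\Delta)$ by $\pi^*c(v)$ in view of the $\mathrm{TLR}$, so differentiating the resulting identity in the translation parameter turns powers of $\rho(\Delta)$ into moments of $\Delta$ with coefficients involving $c(m)^i$. The cleanest packaging is equivariant cohomology: $\toricbun$ arises by pulling back the universal family $(\toricvar_\Sigma)_{\T}\to B\T$ along a classifying map $\B\to B\T$ for $\Tbun$ (with $c\colon M=H^2(B\T)\to H^2(\B)$ induced by this map), and the equivariant Duistermaat--Heckman formula on the toric fibre gives
\[
\pi^{\T}_{*}\bigl(\rho_{\T}(\Delta)^{t+i}\bigr) = \frac{(t+i)!}{i!}\int_\Delta m^i\,dm \quad\text{in } \Sym^i(M_\bbR) = H^{2i}(B\T;\bbR),
\]
which specialises along the classifying map (substituting $c(m)$ for $m$) to the required identity on $\B$.

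The main obstacle is this equivariant identity: showing that the sum of fibrewise pushforward contributions from all top-dimensional cones of $\Sigma$ assembles into the polynomial integral $\int_\Delta m^i dm$. Classical routes to it are Atiyah--Bott localisation or a simplicial decomposition of $\Delta$. An attractive alternative that stays in ordinary cohomology is to polarise: both $\pi_*\rho(\Delta)^{t+i}$ and $\int_\Delta c(m)^i dm$ are degree $t+i$ polynomial functions of $\Delta$ on the monoid of polytopes, so matching their polarisations $\pi_*(\rho(\Delta_1)\cdots\rho(\Delta_{t+i}))$ against the mixed integral over $\Delta_1\times\cdots\times\Delta_{t+i}$ reduces, via the mixed form of the classical BKK theorem on the fibre, to a finite induction in which the twisted linear relations successively trade one fibrewise factor for a base class $c(m)$ until the remaining fibre part is evaluated by classical mixed BKK.
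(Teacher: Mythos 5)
Your argument is essentially correct, but it takes a genuinely different route from the one the paper relies on. Note first that the paper does not reprove this statement: it quotes it from \cite{toricbundles} and then adapts \emph{that} proof strategy to the arithmetic setting, namely: show that $\Delta\mapsto \rho(\Delta)^{t+i}\pi^*\gamma$ and $\Delta\mapsto\int_\Delta c(m)^i\gamma\,dm$ are homogeneous polynomials of degree $t+i$ on the space of virtual polytopes (via Pukhlikov--Khovanskii convex chains), then match all $(t+1)$-fold partial derivatives along the rays of a simplicial refinement of the fan: derivatives along rays not spanning a cone vanish on both sides (empty intersection/Stanley--Reisner), squarefree derivatives along a maximal cone localize at the dual vertex, and non-squarefree derivatives are handled by induction on multiplicity and on $i$ via the Lie-derivative identity $L_v=\sum_j\langle v,e_j\rangle\partial_j$ --- which is precisely your observation that translating $\Delta$ by $v$ changes $\rho(\Delta)$ by $\pi^*c(v)$. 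Your primary route --- projection formula, reduction to $i!\,\pi_*\rho(\Delta)^{t+i}=(t+i)!\int_\Delta c(m)^i\,dm$ in $H^{2i}(\B)$, and pullback of the equivariant Duistermaat--Heckman/Brion--Vergne identity from the classifying space along the map classifying $\Tbun$ --- is a clean and correct alternative in the topological setting; your fallback via polarization and trading fibre factors for base classes is essentially a repackaging of the differentiation argument. Two cautions. First, do not import the full ring presentation $H^*(\toricbun)=H^*(\B)[y_1,\dots,y_r]/(\mathrm{SR}+\mathrm{TLR})$ as an input: in \cite{toricbundles} that presentation is \emph{deduced from} the BKK theorem, so quoting it wholesale risks circularity; fortunately you only need that the relations hold (the twisted linear relations follow from the $\T$-linearization of $\sO(\mathrm{div}\,\chi^m)$, and the Stanley--Reisner vanishing is fibrewise), not that they generate all relations. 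Second, pin down the sign conventions relating the canonical equivariant lift $\rho_\T(\Delta)$, its moment polytope, and $c(m)=c_1(\Tbun(m))$ (the paper twists by $\chi^{-m}$), since a mismatch replaces $\int_\Delta$ by $\int_{-\Delta}$ and for odd $i$ flips the sign. Finally, be aware that the equivariant route buys elegance here but does not transfer to the arithmetic analogue that is the paper's actual goal, which is why the polynomiality-plus-derivatives argument is the one carried through in Section \ref{hkm}.
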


\subsection{Overview of proof}

Following the outline in \cite{toricbundles} we prove Theorem \ref{bkk} by first showing that the two functions are polynomials and comparing their partial derivatives. The novel notion of adelic polytopes allows us to transfer many ideas from the classical setting. We obtain that $\aI_\gamma$ is a homogeneous polynomial function in Corollary \ref{cor:aIispolynomial}. It is clear, that the same holds for $\aF_\gamma$. For $i>0$, it suffices to show
\[
i!\partial^{k_1}_{1}\dots\partial^{k_r}_{r} \aF_\gamma(\ov{\Delta})=(t+i)!\partial^{k_1}_{1}\dots\partial^{k_r}_{r} \aI_\gamma(\ov{\Delta})
\]
for partial derivatives along the rays of $\widetilde{\Sigma}$ of total degree $k_1+\dots+k_r = t+1$. In the case of multiplicity $\mu=k_1+\dots+k_r-r$ of the partial derivative being $0$, the comparison is done by direct calculation. We perform induction on $i+\mu$.

For $i=0$, we observe that
\eqn{
t!\cdot \aI_\gamma(\ov{\Delta}) = t!\cdot \vol(\Delta) \deg(\gamma)= \deg(\Delta) \adeg(\gamma)= \adeg(\arho(\ov{\Delta})^{t}\cdot \pi^*\gamma) = \aF_\gamma(\ov{\Delta}).
}
This is just the classical BKK theorem except for the second to last equality which follows from a projection formula that can be deduced adhoc.

The cycle $\gamma$, or more precisely $\gamma\cap [\sB]$, can be represented as a sum of closed points and a measure on $\sB$. By linearity, assume first that $\gamma$ is represented by a closed point. The cycle $\pi^*\gamma$ is given as the fibre over $\gamma$. The restriction of $\rho(\Delta)$ to this fibre has degree $\deg(\Delta)$. Hence, the claim holds in this case. Now suppose $\gamma$ has only an archimedean part $\omega$. Then $\rho(\Delta)^{t}\cdot \pi^*\gamma = \int_{\toricbun(\bbC)} c_1(\arho(\Delta))^t \pi^* \omega = \int_{\B(\bbC)}\left(\int_{\pi^{-1}(b)} c_1(\arho(\Delta))^t|_{\pi^{-1}(b)} \right)\omega(b) = \deg(\Delta) \deg{\gamma}$.

\subsection{Arithmetic convex chains}

The goal of this section is to prove that $\aI_\gamma(\ov{\Delta})$ is a polynomial on the space of arithmetic virtual polytopes. This requires an extension of the ideas in \cite{pukhlikov_khovanskii_virtual_polyhedra} to the arithmetic setting.

\begin{definition}
    Let $V$ be a finite dimensional real vector space. Then, a convex chain on $V$ is a function $\alpha:V \to \bbZ$ of the form $\alpha = \sum_{i=1}^k n_i \one_{A_i}$ for polytopes $A_i \in \calP^+$ and $n_i \in \bbZ$. This forms an algebra $Z(V)$ under the usual addition and multiplication given by the convolution product.
\end{definition}

\begin{definition}
    A \emph{finitely additive measure} on $\calP^+$ is a map $\phi:\calP^+ \to M$ to an abelian group $M$ satisfying the following property:

    If $A_1,\dots,A_N\in \calP^+$ are such that $\cup_{i=1}^N A_i \in \calP^+$, then the following inclusion-exclusion relation holds:
    \[
        \phi(\bigcup_{i=1}^N A_i) = \sum_i \phi(A_i) - \sum_{i<j} \phi(A_i\cap A_j)+\dots
    \]
    The empty set satisfies $F(\emptyset) = 0$.
\end{definition}

\begin{definition}
    \begin{enumerate}
        \item A map $p:N\to M$ of abelian groups is called a polynomial of degree $\leq k$ if one of the following two conditions holds:
        \begin{enumerate}
            \item $k=0$ and $p$ is constant, i.e.\ $p(N)=m \in M$
            \item $k\geq 1$ and for any $a \in N$, the map $p_a: N \to M$, $p_a:x\mapsto p(x+a)-p(x)$, is a polynomial of degree $\leq k-1$.
        \end{enumerate}
        \item A measure $\phi: \calP^+ \to M$ is polynomial of degree $\leq k$ if for each $A \in \calP^+$ the function $\phi(A + v):V \to M$ is polynomial of degree $\leq k$.
    \end{enumerate}
\end{definition}

\begin{remark}
    The notion of convex chains allows for a reinterpretation of the notions of measure. Namely, a finitely additive measure is an arbitrary homomorphism of additive groups $\phi:Z(V) \to M$. If the measure is polynomial of degree $\leq k$, this extends to translation of functions in $Z(V)$. The remark justifies calling $\calP^+ \to Z(V)$ the universal measure. This is discussed below \cite[Definition 2.8]{pukhlikov_khovanskii_virtual_polyhedra}.
\end{remark}

Let $\tau_v:Z(V) \to Z(V)$ be the translation by a vector defined by $\tau_v \alpha (x) = \alpha(x-v)$ for $\alpha\in Z(V)$. Let $J_k \subset Z(V)$ denote the subgroup generated by chains of the form
\[
    (\tau_{v_1} - 1) \circ \dots \circ (\tau_{v_k} - 1)(\alpha)
\]
for all $v_1, \dots,v_k \in V$. The subgroup $J_k \subset Z(V)$ is an ideal. The map $\calP^+ \to Z(V)/J_{k+1}$ is the universal polynomial measure of degree $\leq k$, i.e.\ all polynomial measures of degree $\leq k$ factor uniquely through a homomorphism $Z(V)/J_{k+1} \to M$.

The degree of a convex chain $\alpha = \sum_{i=1}^k n_i \one_{A_i}$ for $A_i \in \calP^+$ is defined to be $\sum_{i=1}^k n_i$. This is well-defined by \cite[Proposition/Definition 2.1]{pukhlikov_khovanskii_virtual_polyhedra}. Let $\scrL \subset Z(V)$ denote the ideal of degree $0$ chains.

\begin{theorem}[Theorem 2.3 \cite{pukhlikov_khovanskii_virtual_polyhedra}]\label{thm:pukh_khov}
    For $k \geq 1$,
    \[
        \scrL^{\dim V + k} \subset J_k.
    \]
\end{theorem}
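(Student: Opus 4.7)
I propose to prove the statement by induction on $k$, with the base case supplied by McMullen's polytope algebra theorem. First observe that $J_k$ coincides with the ideal power $J_1^k$: since the Minkowski-convolution product on $Z(V)$ is commutative and $\tau_v$ acts as multiplication by the point class $[v]$, the operator $(\tau_v - 1)$ is multiplication by $[v] - [0]$, so $J_1$ is an ideal and $J_k = J_1^k$. The claim thus reduces to $\scrL^{\dim V + k} \subset J_1^k$. The case $k = 1$ is McMullen's polytope algebra theorem: $Z(V)/J_1$ is identified with the graded commutative ring $\Pi(V) = \bigoplus_{i=0}^{\dim V}\Pi_i$, the image of $\scrL$ is the augmentation ideal $\bigoplus_{i \geq 1}\Pi_i$, and since products respect the grading and $\Pi_i = 0$ for $i > \dim V$, this ideal is nilpotent of index at most $\dim V + 1$.

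\textbf{Inductive step.} The naive attempt $\scrL^{d+k} \subset \scrL \cdot J_1^{k-1}$ does not close, because $\scrL$ is not contained in $J_1$; iterating McMullen directly yields only the weaker bound $\scrL^{k(d+1)} \subset J_k$. To obtain the sharp bound $d + k$ I would amplify McMullen by passing to a higher-dimensional polytope algebra. Set $V' = V \oplus \bbR^{k-1}$ and encode a $k$-fold Minkowski product of chains on $V$ as a single chain on $V'$ via a Cayley-type construction, placing each factor at a distinct vertex of the standard $(k-1)$-simplex in $\bbR^{k-1}$. A $(d+k)$-fold product of elements of $\scrL(V)$ then corresponds to a product of $d + k = \dim V' + 1$ degree-zero chains on $V'$, which lies in $J_1(V')$ by McMullen applied to $V'$. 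Pushing back to $V$ by the appropriate integration over the auxiliary simplex, one recovers an element of $J_k(V)$, because each of the $k - 1$ additional coordinates of $V'$ contributes a further factor of $(\tau - 1)$ under the push-down.

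\textbf{Main obstacle.} The crucial technical step is justifying the Cayley-type correspondence at the convex-chain level and verifying that McMullen's $J_1$-vanishing on $V'$ descends to $J_k$-vanishing on $V$, rather than merely to $J_1$-vanishing. This requires a careful polarization-style argument, analogous to passing from a homogeneous degree-$k$ polynomial to its associated multilinear form. An alternative route is to work with the associated graded ring $\bigoplus_{m \geq 0} J_1^m/J_1^{m+1}$ and identify its structure as a symmetric algebra over $\Pi(V)$, using the first-order linearity $(\tau_{v+w} - 1) \equiv (\tau_v - 1) + (\tau_w - 1) \pmod{J_1^2}$; either strategy ultimately relies on the detailed combinatorial analysis carried out in the Pukhlikov-Khovanskii paper, which is why the authors are content to cite the result rather than reprove it here.
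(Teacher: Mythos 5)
The paper does not prove this statement; it cites Theorem~2.3 of Pukhlikov--Khovanskii and uses it as a black box. Your write-up correctly identifies that, and your preliminary observations are sound: under the convolution product $\tau_v$ is indeed multiplication by the point class $\mathbf 1_{\{v\}}$, so $J_1$ is an ideal and $J_k = J_1^k$; and McMullen's structure theorem for the polytope algebra $\Pi(V) = Z(V)/J_1$, graded in degrees $0,\dots,\dim V$ with $\scrL$ mapping onto the augmentation ideal, gives the base case $\scrL^{\dim V + 1}\subset J_1$. You are also right that the naive iteration only yields the much weaker bound $\scrL^{k(\dim V + 1)}\subset J_k$, so some amplification is genuinely needed.

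However, the inductive step as you wrote it is a proposal, not a proof, and the gap is exactly where you flagged it. You never construct the Cayley-type correspondence at the chain level, never verify that it is a ring map compatible with the $J$-filtrations, and — most critically — never explain why a $J_1(V')$-vanishing on the auxiliary space $V' = V\oplus\bbR^{k-1}$ pushes down to a $J_k(V)$-vanishing rather than merely a $J_1(V)$-vanishing. That last point is the whole content of the sharpened bound, and asserting that ``each additional coordinate contributes a further factor of $(\tau - 1)$ under the push-down'' is the claim that needs an argument, not a justification of one. Your candid closing remark that either route ``ultimately relies on the detailed combinatorial analysis carried out in the Pukhlikov--Khovanskii paper'' concedes this: the proposal terminates in the same citation the paper makes, so it neither reproduces a proof the paper contains (there is none) nor supplies an independent one. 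If you want to actually close this, the place to look is the explicit computation with translates and inclusion--exclusion on simplices in Pukhlikov--Khovanskii, rather than a Cayley trick whose compatibility with the $J$-filtration is unverified.
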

This theorem that any polynomial measure on $V$ of degree $\leq k$ restricted to the group of virtual polytopes $\calP$ is a polynomial of degree $\leq \dim V + k$ by \cite[Corollary 2.5]{pukhlikov_khovanskii_virtual_polyhedra}.

Let us now introduce the arithmetic analogues of $\calP$. Due to the view towards toric varieties, we denote $\dim V = t$. We freely use the notation of Definition \ref{defi:I_metrized_polytope} and

\begin{definition}
    An $I$-metrized convex chain on $V$ is a function $\alpha:V\oplus \bigoplus_{i \in I} \bbR \to \bbZ$ of the form $\alpha = \sum_{i=1}^k n_i \one_{A_i}$ for $I$-metrized polytopes $A_i \in \calP^{I,+}$ and $n_i \in \bbZ$. This forms an algebra $Z^I(V)$ under the usual addition and multiplication given by the convolution product. For $I=\emptyset$, we recover the algebra of convex chains $Z(V)$.
\end{definition}

Let $*$ denote a one element set. Then, the addition $\bigoplus_{i \in I} \bbR \to \bbR$ via the pushforward from \cite[Proposition/Definition 2.2]{pukhlikov_khovanskii_virtual_polyhedra} induces a ring homomorphism $Z^I(V) \to Z^*(V)$. We note that the algebra of metrized convex chains $Z^*(V)$ can be is a subalgebra of $Z(V \oplus \bbR)$. The $I$-metrized convex chains are a subalgebra of $\bigcup_{J\subseteq I,\text{finite}} Z(V \oplus \bigoplus_{j \in J} \bbR)$.

A polynomial map from an $\bbR$-vector $W$ space to $\bbR$ will be called a polynomial function if it is continuous on every finite dimensional subvector space. A polynomial function $f:W\to\bbR$ is said to be homogeneous of degree $k$ if for $\lambda \in \bbR$ and $w\in W$ the equality $f(\lambda w)= \lambda^kf(w)$ holds.

\begin{definition}
    Let $f$ be a polynomial function of degree $\leq k$ on $V$. Then, we denote by $I_f$ the map $\calP \to \bbR$ extended from $\Delta \mapsto \int_\Delta f$. It is a degree $\leq k$ measure on the space of polytopes.
\end{definition}

The statement of \cite[Theorem 5.5]{toricbundles} is an easy corollary of Theorem \ref{thm:pukh_khov} and summarizes the discussion in a convenient way.

\begin{theorem}[Theorem 5.5 \cite{toricbundles}]
    If $f:V\to \bbR$ is a homogeneous polynomial function of degree $k$, then the function $I_f: \calP^+ \to \bbR; (\Delta) \mapsto I_f(\Delta) = \int_\Delta f(x) d\mu$ admits a unique extension to a homogeneous polynomial function of degree $t+k$ on $\calP$.
\end{theorem}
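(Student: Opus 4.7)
My plan is to recognize $I_f$ as a polynomial measure of degree at most $k$ on $\calP^+$ and then invoke the Pukhlikov--Khovanskii machinery summarized in Theorem \ref{thm:pukh_khov} to produce and control its extension to $\calP$. Homogeneity will then be an elementary change-of-variables argument combined with the polynomial character of the extension.

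First I would check that $I_f$ is a finitely additive measure on $\calP^+$: this is immediate from inclusion--exclusion for Lebesgue integration against a continuous integrand. Next I would verify that this measure is polynomial of degree $\le k$ in the sense of the preceding definition. Translating by $v \in V$ gives $I_f(\Delta + v) = \int_\Delta f(x+v)\,dx$, and since $f$ is polynomial of degree $k$, the Taylor expansion $f(x+v) = \sum_{|\alpha|\le k} \tfrac{1}{\alpha!}\partial^\alpha f(x)\, v^\alpha$ exhibits $v \mapsto I_f(\Delta + v)$ as a polynomial of degree $\le k$ in $v$ with coefficients $\tfrac{1}{\alpha!}\int_\Delta \partial^\alpha f(x)\,dx$. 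Hence $I_f$ factors through the universal polynomial measure of degree $\le k$, i.e.\ through $Z(V)/J_{k+1}$.

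I would then apply Theorem \ref{thm:pukh_khov} in the form of Corollary 2.5 of \cite{pukhlikov_khovanskii_virtual_polyhedra}: the inclusion $\scrL^{t+k+1} \subset J_{k+1}$ implies that the composition of the natural map $\calP \to Z(V)/J_{k+1}$ (obtained by group-completing $\calP^+$) with the linear functional induced by $I_f$ is a polynomial function on $\calP$ of degree at most $t+k$. This produces the extension. Uniqueness is automatic because $\calP$ is the group completion of the cancellative monoid $\calP^+$, and polynomial functions on an abelian group are determined by their restriction to a generating sub-monoid.

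For homogeneity of degree exactly $t+k$, the change of variables $x = \lambda y$ for $\lambda > 0$ gives $I_f(\lambda \Delta) = \lambda^t \int_\Delta f(\lambda y)\,dy = \lambda^{t+k} I_f(\Delta)$, using homogeneity of $f$ of degree $k$. The identity then extends from $\lambda > 0$ to all $\lambda \in \bbR$ on any finite-dimensional subspace of $\calP$ by the polynomial nature of both sides. The only non-trivial ingredient is Theorem \ref{thm:pukh_khov} itself, which is taken as input; once that is in place the proof reduces to Taylor expansion, scaling, and bookkeeping, with the main conceptual care needed to match the polynomial-measure formalism of \cite{pukhlikov_khovanskii_virtual_polyhedra} with the polynomial-function convention of the excerpt.
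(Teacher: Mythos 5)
Your proposal is correct and follows exactly the route the paper alludes to: it recognizes $I_f$ as a polynomial measure of degree $\leq k$ (via Taylor expansion of $f$ under translation) and then invokes the Pukhlikov--Khovanskii theorem (Theorem~\ref{thm:pukh_khov} / Corollary 2.5 of \cite{pukhlikov_khovanskii_virtual_polyhedra}) to extend it to a degree $\leq t+k$ polynomial function on $\calP$, with homogeneity of degree $t+k$ from the scaling $I_f(\lambda\Delta)=\lambda^{t+k}I_f(\Delta)$. The paper only states that this is ``an easy corollary of Theorem~\ref{thm:pukh_khov}''; your argument supplies the bookkeeping that justifies this claim.
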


We will apply this to prove an arithmetic variant. In the proof of Theorem \ref{bkk} it will be applied for the function $M_\bbR \times \bbR\to \bbR$ given by $(m,x) \mapsto (\ac(m) + x[\infty])^i\gamma$.

\begin{theorem}\label{thm:arithmetic_polynomial_extension}
    If $f:V \times \bbR \to \bbR$ is a homogeneous polynomial function of total degree $k$, then the function 
    \[
        \aI_f: \aP^+ \to \bbR,\ \ov{\Delta} \mapsto \int_\Delta f(m,\theta(m)) dm
    \]
    admits a unique extension to a homogeneous polynomial function of degree $t+k$ on $\aP$. Here $\theta$ denotes the global roof function of $\ov{\Delta}$.
\end{theorem}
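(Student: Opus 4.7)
The plan is to reduce the statement to the geometric extension result (Theorem~5.5 of \cite{toricbundles}) applied in the finite-dimensional spaces $V\oplus\bbR^{J'}$ for varying finite $J'\subset I$. The key observation is that the coordinate projections $\pi_{J'}: V\oplus\bbR^J \to V\oplus\bbR^{J'}$ commute with Minkowski sums, and hence descend to monoid homomorphisms $\calP^{J,+}\to \calP^{J',+}$; applied to an $I$-metrized polytope $\ov{\Delta}$ with roof functions $\theta_j$, the polytope $\pi_{J'}(\ov{\Delta})$ is again a $J'$-metrized polytope whose local roof functions are simply $(\theta_j)_{j\in J'}$.

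Concretely, I would write $f(m,x) = \sum_{p=0}^k g_p(m)\,x^p$ with $g_p:V\to\bbR$ homogeneous of degree $k-p$. For $\ov{\Delta}\in \calP^{J,+}$ with global roof function $\theta = \sum_{j\in J} n_j \theta_j$, the multinomial expansion of $\theta(m)^p$ combined with the identity $\theta_j(m)^{p_j} = p_j\int_0^{\theta_j(m)} y_j^{p_j - 1}\,dy_j$ for $p_j\geq 1$ and Fubini's theorem yields
\[
\aI_f(\ov{\Delta}) = \sum_{(p,(p_j))} c_{p,(p_j)}\cdot I_{h_{p,(p_j)}}\bigl(\pi_{J'}(\ov{\Delta})\bigr),
\]
where the sum ranges over $0\leq p\leq k$ and multi-indices $(p_j)_{j\in J}$ with $\sum_j p_j = p$, where $J' = \{j : p_j \geq 1\}$, $c_{p,(p_j)}$ is a combinatorial constant, and the polynomial $h_{p,(p_j)}(m,(y_j)) = g_p(m)\prod_{j\in J'} y_j^{p_j - 1}$ on $V\oplus\bbR^{J'}$ is homogeneous of degree $(k-p) + (p-|J'|) = k-|J'|$. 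The case $(p_j)\equiv 0$, giving $J'=\emptyset$, reduces to the integral of $g_0$ over $\Delta = \pi_V(\ov{\Delta})$.

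By Theorem~5.5 of \cite{toricbundles} applied in $V\oplus\bbR^{J'}$, each $I_{h_{p,(p_j)}}$ extends uniquely to a homogeneous polynomial of degree $(t+|J'|) + (k-|J'|) = t+k$ on $\calP(V\oplus\bbR^{J'})$. Pulling back through the group-completed projection $\pi_{J'}:\aP\to \calP(V\oplus\bbR^{J'})$ and summing the finitely many terms produces the required homogeneous degree-$(t+k)$ polynomial extension of $\aI_f$ on $\aP$. The construction is independent of the choice of $J$ (enlarging $J$ only adjoins indices with $\theta_j\equiv 0$, so only multi-indices with $p_j=0$ on the new indices contribute nontrivially), and uniqueness on $\aP$ follows because $\aP^+$ generates $\aP$ and polynomials of bounded degree on an abelian group are determined by their values on any generating sub-monoid, as in the proof of \cite[Corollary~2.5]{pukhlikov_khovanskii_virtual_polyhedra}.

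The main technical point is the reduction step: one must verify that the coordinate projection $\pi_{J'}$ really does send $\calP^{J,+}$ into $\calP^{J',+}$ with the prescribed roof functions $(\theta_j)_{j\in J'}$. This is an immediate consequence of Definition~\ref{defi:I_metrized_polytope} since Minkowski sums and linear projections commute, but it is the geometric input that converts the problem on the arithmetic polytope space $\aP$ into finitely many instances of the purely geometric extension problem on each $V\oplus\bbR^{J'}$, where Theorem~5.5 of \cite{toricbundles} applies directly.
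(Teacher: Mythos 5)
Your proof is correct, but it takes a genuinely different route from the paper's. The paper applies the fundamental theorem of calculus once in the single ``global'' $\bbR$-coordinate, obtaining $\aI_f(\ov{\Delta}) = I_{f|_{V\times 0}}(\Delta) + I_{f'}(\widehat{\Delta})$ where $\widehat{\Delta}\subset V\oplus\bbR$ is the global polytope under the roof $\theta$; the second term is then handled by pulling back through the ring homomorphism $Z^{M_K}(V)\to Z^*(V)$ induced by the (weighted) sum $\bigoplus_i\bbR\to\bbR$, so the geometric Theorem~5.5 of \cite{toricbundles} is invoked only in $V$ and in $V\oplus\bbR$. You instead expand $f$ into monomials $g_p(m)x^p$, expand $\theta^p$ multinomially in the local roof functions $\theta_j$, and rewrite each power $\theta_j(m)^{p_j}$ as an integral, so that each resulting term lives on a distinct space $V\oplus\bbR^{J'}$; you then invoke Theorem~5.5 once per multi-index and sum the pullbacks through the coordinate projections $\pi_{J'}$. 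The paper's approach is more compact (a single FTC step, a single pushforward map), while yours is more explicit and handles the place-weights $n_j$ transparently through the combinatorial constants rather than implicitly through a weighted sum map; it also makes the finitary/compatibility structure of $\aP = \colim_J \calP^J$ visible, at the cost of bookkeeping over multi-indices. Both correctly reduce to the geometric statement, and your uniqueness argument via the generating submonoid $\aP^+$ matches the paper's implicit use of the same principle.
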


\begin{proof}
    We reduce the statement to \cite[Theorem 5.5]{toricbundles}. For this note that the partial derivative $f'$ with respect to the last variable is a polynomial of degree $k-1$. Assume that $\theta_v(x)\geq 0$ for all $x\in \Delta$ and all places $v\in M_\K$. Then, we have that $\aI_f(\ov{\Delta}) = I_f(\Delta) + I_{f'}(\widehat{\Delta})$ by the fundamental theorem of calculus. The first term is known to be a polynomial of degree $t+k$ by \cite[Theorem 5.5]{toricbundles}.

    The second term is a degree $\leq k-1$ measure on $Z^*(V)$. Hence, it gives a degree $\leq k+t$ polynomial on $Z^*(V)$. Since the map $Z^{M_K}(V) \to Z^*(V)$ is a ring homomorphism, it follows that we obtain a degree $t+k$ polynomial on virtual adelic polytopes.
\end{proof}

\begin{corollary}\label{cor:aIispolynomial}
    The function
    \[
        \aI_\gamma: \aP^+ \to \bbR,\ \ov{\Delta} \mapsto \int_\Delta (\ac(m) + \theta(m)[\infty])^i\gamma dm
    \]
    admits a unique extension to a homogeneous polynomial function of degree $t+i$ on $\aP$.
\end{corollary}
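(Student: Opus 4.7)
The plan is to deduce this corollary directly from the preceding Theorem \ref{thm:arithmetic_polynomial_extension} by exhibiting the integrand as a homogeneous polynomial function on $V \oplus \bbR$ with $V = M_\bbR$.

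First I would fix notation: set
\[
    f : M_\bbR \oplus \bbR \to \bbR, \qquad f(m,x) \defeq \adeg\!\left( (\ac(m) + x[\infty])^i \cdot \gamma \right),
\]
where $\ac$ is extended $\bbR$-linearly to $M_\bbR$ and the intersection product on the right is the multilinear pairing between integrable adelic line bundles and the operational class $\gamma \in A^{g+1-i}(\sB)$ constructed in Proposition \ref{prop:intersect_integrable_line_bundles_with_cycles}. Then I would verify two properties: (i) that $f$ is a polynomial function in the sense used in Section \ref{hkm}, and (ii) that $f$ is homogeneous of degree $i$. Point (i) follows because $m \mapsto \ac(m)$ is linear, $x \mapsto x[\infty]$ is linear, and the arithmetic intersection pairing is multilinear and continuous on finite dimensional subspaces; expanding the $i$-th power yields a finite sum of monomials of total degree $i$ in $(m,x)$. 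Point (ii) is then immediate from the scaling behaviour of each monomial.

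Next I would observe that, unwinding the definitions of the local and global roof functions of an adelic polytope $\ov{\Delta} \in \aP^+$ (cf.\ Definition \ref{defi:roof_functions_polytopes}), the integrand in the definition of $\aI_\gamma(\ov{\Delta})$ is exactly $f(m,\theta(m))$ where $\theta$ is the global roof function of $\ov{\Delta}$. Hence
\[
    \aI_\gamma(\ov{\Delta}) \;=\; \int_\Delta f(m,\theta(m)) \, dm \;=\; \aI_f(\ov{\Delta}).
\]
I would then simply apply Theorem \ref{thm:arithmetic_polynomial_extension} to $f$, which produces a unique extension of $\aI_f$ to a homogeneous polynomial function of degree $t+i$ on the group $\aP$ of virtual adelic polytopes.

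Finally I would address uniqueness of the extension of $\aI_\gamma$ itself: two polynomial functions on the $\bbR$-vector space $\aP$ that agree on the monoid $\aP^+$ must coincide, because $\aP^+$ spans $\aP$ (it is its group completion, tensored with $\bbR$) and polynomial functions are determined by their values on any spanning subset together with continuity on finite dimensional subspaces. I do not anticipate a serious obstacle here, since the conceptual work has been done by Theorem \ref{thm:arithmetic_polynomial_extension}; the only thing to check carefully is that the pairing $\adeg((\ac(\blank) + \blank[\infty])^i \cdot \gamma)$ really does define a scalar-valued polynomial of the expected degree, which is a direct consequence of the multilinear pairing of Proposition \ref{prop:intersect_integrable_line_bundles_with_cycles}.
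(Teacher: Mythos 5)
Your proposal is correct and takes precisely the route the paper does: apply Theorem~\ref{thm:arithmetic_polynomial_extension} to $f(m,x) = (\ac(m) + x[\infty])^i \cdot \gamma$. The paper's proof is a single sentence and leaves implicit what you spell out (that $f$ is a homogeneous degree-$i$ polynomial via the multilinear pairing of Proposition~\ref{prop:intersect_integrable_line_bundles_with_cycles}, and that uniqueness is already built into the theorem being cited).
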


\begin{proof}
    We apply Theorem \ref{thm:arithmetic_polynomial_extension} to the function $M_\bbR \times \bbR\to \bbR$ given by $(m,x) \mapsto (\ac(m) + x[\infty])^i\gamma$.
\end{proof}

\subsection{Differentiation of $\aI_\gamma$}
We compute the $(t+1)$-st partial derivatives of $\aI_\gamma$ of multiplicity $0$ for a preferred basis of the space of adelic polytopes. We compute more generally the derivatives of functions of the form $\aI_f$ introduced in Theorem \ref{thm:arithmetic_polynomial_extension} for smooth $f$ on $V\oplus\bbR$. Denote the restriction of $f$ to $V \times 0$ by $f$ as well and $f'$ the partial derivative along the $\bbR$-summand. We note by the proof of Theorem \ref{thm:arithmetic_polynomial_extension} that $\aI_f(\ov{\Delta}) = I_f(\Delta) + I_{f'}(\widehat{\Delta})$. We compute the differentials of each summand separately.

Let $\widetilde{\Sigma}$ be a simplicial adelic fan with recession fan $\Sigma$. The vector space of virtual adelic polytopes compatible with $\widetilde{\Sigma}$, $\aP_{\widetilde{\Sigma}}$, has a distinguished basis provided by the virtual polytopes corresponding to the rays of $\widetilde{\Sigma}$. We say that a set of rays spans a cone if there is a place at which they span a cone. Since $\widetilde{\Sigma}$ is simplicial, there is a unique virtual polytope $h$ whose support function is $1$ at the primitive generator of a chosen ray $\tau$ and $0$ on all other rays. We refer to $h$ as the polytope dual to $\tau$. We will be working with a finite set of rays $\tau_1,\dots,\tau_s$. We denote its primitive generators by $e_1,\dots,e_s$ and its dual polytopes by $h_1,\dots,h_s$. We denote the partial derivative in the direction of $h_i$ by $\partial_i$.

We recall first the classical case. Let $f:V \to \bbR$ be a continuous function and $\Sigma$ be a complete fan on $V$. Let $\Delta$ be a polytope in the interior of $\calP^+_\Sigma$.

\begin{lemma}[Lemma 6.1 \cite{toricbundles}]
    Let $\tau_1,\dots\tau_{t}$ be rays spanning a maximal cone dual to a vertex $A \in \Delta$. Then, we have
    \eqn{\partial_1\dots\partial_{t}(I_f)(\Delta) = f(A)\cdot |\det(e_1,\dots,e_t)|.}
\end{lemma}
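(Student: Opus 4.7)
The plan is to compute the derivatives by changing to linear coordinates adapted to the vertex $A$ and applying the fundamental theorem of calculus $t$ times.

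Because the maximal cone spanned by $\tau_1,\dots,\tau_t$ is $t$-dimensional and simplicial, the primitive generators $e_1,\dots,e_t$ form a basis of $N_\bbR$; let $v_1,\dots,v_t\in V$ denote the dual basis. The hypothesis that $A$ is the vertex of $\Delta$ dual to this cone translates to the system $\langle e_i,A\rangle=\Psi_\Delta(e_i)$ for each $i$, so the facet $F_i$ of $\Delta$ dual to $\tau_i$ lies in the hyperplane $\{\langle e_i,\cdot\rangle=\Psi_\Delta(e_i)\}$ and every other facet-defining inequality of $\Delta$ is strictly non-active at $A$. Since the virtual polytope $h_j$ has $\Psi_{h_j}(e_i)=\delta_{ij}$ and vanishes at all other rays of $\widetilde{\Sigma}$, the perturbation $\Delta(\epsilon)\defeq\Delta+\sum_j\epsilon_j h_j$ alters only the active support function values near $A$: in a small neighborhood of $A$ the polytope $\Delta(\epsilon)$ is cut out by $\langle e_i,x\rangle\leq\Psi_\Delta(e_i)+\epsilon_i$ together with the same inactive inequalities, which by continuity remain non-binding for $|\epsilon|$ small.

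Introduce coordinates $y_i\defeq\langle e_i,x\rangle-\Psi_\Delta(e_i)$ on $V$; the Jacobian of this change of variables is the determinant $|\det(e_1,\dots,e_t)|$. In $y$-coordinates the local model of $\Delta(\epsilon)$ near $A$ is the box $\{y_i\leq\epsilon_i\}$ intersected with $\epsilon$-independent linear inequalities. Splitting $I_f(\Delta(\epsilon))-I_f(\Delta)$ into this local contribution and a remainder that is constant in $\epsilon$, and then applying the fundamental theorem of calculus once per variable (so that the bound $y_i\leq\epsilon_i$ is turned into an evaluation at $y_i=\epsilon_i$), the $t$-fold mixed partial $\partial_1\cdots\partial_t I_f(\Delta(\epsilon))$ collapses to a point evaluation of $f$ at the updated vertex $A(\epsilon)=A+\sum_i\epsilon_i v_i$, weighted by the Jacobian of the coordinate transformation. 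Setting $\epsilon=0$ yields $f(A)\cdot|\det(e_1,\dots,e_t)|$.

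The main bookkeeping step requiring care is verifying that the remainder coming from the inactive facets of $\Delta$ contributes nothing to the $t$-fold mixed derivative at $\epsilon=0$. This is precisely where the maximality (hence $t$-dimensionality) of the cone spanned by $\tau_1,\dots,\tau_t$ enters: because $h_j$ leaves the inactive facets unchanged, the remainder depends on strictly fewer than $t$ of the $\epsilon_j$ and is therefore annihilated by the iterated mixed derivative. Once this splitting is set up correctly, the rest of the argument is routine calculus.
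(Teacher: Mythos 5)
The paper does not prove this lemma; it cites it from \cite{toricbundles} and only remarks that the smoothness assumption there on $f$ can be dropped. So your argument has to stand on its own, and I see two genuine issues in it.

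First, the splitting into a local piece and a ``remainder that is constant in $\epsilon$'' is not correct. Adding $\epsilon_j h_j$ displaces the facet $F_j$ along its entire extent, not just near $A$, so the difference $I_f(\Delta(\epsilon))-I_f(\Delta)$ is a genuinely global quantity and the remainder does depend on $\epsilon$. You then retreat to the weaker (and correct) claim that the remainder is annihilated by $\partial_1\cdots\partial_t$ because it depends on fewer than $t$ of the $\epsilon_j$, but the reason you give --- that $h_j$ leaves the inactive facets unchanged --- misidentifies where the remainder comes from: it comes from the moving facets $F_1,\dots,F_t$ themselves, away from $A$. The correct reason is the duality with the maximal cone: $F_1\cap\cdots\cap F_t=\{A\}$, so every face of $\Delta$ other than the single vertex $A$ misses at least one of $F_1,\dots,F_t$, and correspondingly every region of $\Delta(\epsilon)\setminus\Delta$ away from $A$ is independent of at least one $\epsilon_j$. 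One clean way to make this precise is to expand, in $y$-coordinates, $\one_{\Delta(\epsilon)}-\one_\Delta=\bigl(\prod_j\one_{\{y_j\le\epsilon_j\}}-\prod_j\one_{\{y_j\le0\}}\bigr)\cdot(\text{other constraints})$ as a sum over nonempty $S\subseteq\{1,\dots,t\}$ of products $\prod_{j\in S}\one_{\{0<y_j\le\epsilon_j\}}\prod_{j\notin S}\one_{\{y_j\le0\}}$, and observe that only the $S=\{1,\dots,t\}$ term depends on all $t$ parameters. Alternatively, one can bypass the remainder entirely by iterating the Leibniz rule directly on the full polytope: $\partial_t$ produces an integral over the facet $F_t(\epsilon)$, $\partial_{t-1}\partial_t$ an integral over $F_{t-1}\cap F_t(\epsilon)$, and so on down to the point $A$; again the duality $F_1\cap\cdots\cap F_t=\{A\}$ is exactly what makes the chain terminate at a point.

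Second, the Jacobian factor appears on the wrong side of the fraction in your computation. You correctly identify $|\det(e_1,\dots,e_t)|$ as the Jacobian of the map $x\mapsto y$, but then $dx=|\det(e_1,\dots,e_t)|^{-1}\,dy$, so the integral rewritten in $y$-coordinates carries the reciprocal factor, and iterated FTC yields $f(A)/|\det(e_1,\dots,e_t)|$, not $f(A)\cdot|\det(e_1,\dots,e_t)|$. This matches the standard toric intersection formula $D_1\cdots D_t=1/\mathrm{mult}(\sigma)$ for prime divisors spanning a simplicial maximal cone $\sigma$. Whether the reference uses a different normalization (a dual-basis determinant, or a lattice-renormalized measure) is a matter I would double-check against \cite{toricbundles}; but as written your own change of variables contradicts the weight you claim, so either state the reciprocal or pin down the convention that makes $|\det|$ come out on top.
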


Note that in the reference $f$ is assumed smooth. This is, however, not used in the proof. An adelic analogue of \cite[Lemma 6.1]{toricbundles} is given below.

\begin{proposition}
    \label{lemmaI}Let $\tau_1,\dots\tau_{t+1}$ be rays spanning a maximal cone $\sigma$ in $\widetilde{\Sigma}$ at a place $v$ and $\ov{\Delta}\in\aP^+_{\widetilde{\Sigma}}$ a $v$-interior polytope. Suppose that $\sigma$ is dual to the vertex $(A,\theta_v(A))\in \Delta_v$. Then, we have
    \eqn{\partial_1\dots\partial_{t+1}(I_f)(\widehat{\Delta}) = f(A, \theta(A))\cdot |\det(e_1,\dots,e_{t+1})|.}
\end{proposition}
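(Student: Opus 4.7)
The strategy is to reduce the claim to the geometric Lemma 6.1 of \cite{toricbundles} applied to the polytope $\widehat{\Delta}$ regarded as a polytope of dimension $t+1$ in $V\oplus\bbR$. The starting observation is that the assignment $\ov{\Delta}\mapsto\widehat{\Delta}$ is the pushforward along the linear map
\[
    \pi\colon V\oplus\bigoplus_{w\in M_K}\bbR\longrightarrow V\oplus\bbR,\qquad (x,(t_w))\mapsto\Bigl(x,\sum_w n_w t_w\Bigr),
\]
which extends linearly to the spaces of virtual polytopes. Consequently, differentiating $\ov{\Delta}\mapsto I_f(\widehat{\Delta})$ at $\ov{\Delta}$ in the direction $h_i$ is the same as differentiating the polytope functional $I_f$ at $\widehat{\Delta}$ in the direction of the pushed-forward polytope $\pi(h_i)$ in $V\oplus\bbR$.

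The role of $v$-interiority is to identify the combinatorial data attached to $\sigma$ at place $v$ with the combinatorial data of the normal fan of $\widehat{\Delta}$ near the vertex $(A,\theta(A))$. Because $\ov{\Delta}$ is $v$-interior and $\sigma$ is dual to $(A,\theta_v(A))\in\Delta_v$, every local roof function $\theta_w$ with $w\neq v$ is affine on a neighbourhood of $A$, so that $\theta$ differs from $n_v\theta_v$ by an affine function near $A$. The integral affine structure of $\widehat{\Delta}$ near $(A,\theta(A))$ is therefore the transport under $\pi$ of the integral affine structure of $\Delta_v$ near $(A,\theta_v(A))$, and under this transport the cone $\sigma$ with its rays $\tau_1,\dots,\tau_{t+1}$ and dual polytopes $h_1,\dots,h_{t+1}$ is carried to the normal cone of $\widehat{\Delta}$ at $(A,\theta(A))$ together with its rays and dual polytopes.

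Given this identification, Lemma 6.1 of \cite{toricbundles} applied to the $(t+1)$-dimensional polytope $\widehat{\Delta}$ along the $t+1$ transported dual-polytope directions produces exactly $f(A,\theta(A))\cdot|\det(e_1,\dots,e_{t+1})|$, which is the asserted formula. The main technical obstacle is the careful bookkeeping of lattice normalisations when comparing the primitive generators $e_i\in N_\bbZ\oplus\bbZ$ and dual polytopes $h_i$ at place $v$ with their counterparts in the normal fan of $\widehat{\Delta}\subset V\oplus\bbR$: the rescaling factors arising from $\pi$ on the dual polytopes must exactly compensate for the corresponding change of primitive generators, so that the determinant appearing in the end agrees with $|\det(e_1,\dots,e_{t+1})|$ in the stated lattice. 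Under $v$-interiority this reduces to a direct linear-algebra verification at a single vertex.
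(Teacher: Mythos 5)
The paper's proof proceeds by a \emph{shear}: it sets $\sE = \theta - \theta_v$ (the roof contribution from places other than $v$, extended continuously to $V$), replaces $f$ by $\widetilde{f}(x,s) = f(x,s+\sE(x))$, notes that $\partial_1\dots\partial_{t+1}(I_f)(\widehat{\Delta}) = \partial_1\dots\partial_{t+1}(I_{\widetilde{f}})(\Delta_v)$ (because the difference $I_f(\widehat{\Delta})-I_{\widetilde{f}}(\Delta_v)$ is an integral involving only $\Delta$ and $\sE$, hence is annihilated by the at-least-one vertical derivative at place $v$), and then applies \cite[Lemma~6.1]{toricbundles} directly to the local polytope $\Delta_v$ in $\calP^+_{\widetilde{\Sigma}_v}$. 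Your proposal takes a genuinely different route: you observe that $\widehat{\Delta} = \pi_*\ov{\Delta}$ for the weighted-sum projection $\pi$, use linearity of pushforward to rewrite the derivative as $\partial_{\pi_*h_1}\dots\partial_{\pi_*h_{t+1}}I_f(\widehat{\Delta})$, and then try to apply Lemma~6.1 of \cite{toricbundles} to $\widehat{\Delta}$ itself.

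The key step in your approach — that the normal cone of $\widehat{\Delta}$ at $(A,\theta(A))$ is the transport of $\sigma$ and that the $\pi_*h_i$ are the dual polytopes for a fan that $\widehat{\Delta}$ is interior to — hinges on your claim that every $\theta_w$ with $w\neq v$ is affine in a neighbourhood of $A$. You present this as a consequence of $v$-interiority, but I do not see how it follows. The $v$-interiority condition only asserts that $\ov{\Delta}$ lies in the interior of $\aP^+_{\widetilde{\Sigma}}$ under perturbations in $\aP^v$ (i.e.\ perturbations of $\Delta$ and of $\theta_v$); it places no genericity constraint on $\theta_w$ at $A$ for $w\neq v$. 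In particular, nothing forbids $\theta_w$ from having a kink over $A$ (for instance when $A$ is a vertex of $\Delta$, so that $(A,\theta_w(A))$ is automatically a vertex of $\Delta_w$ regardless of whether $\theta_w$ is locally affine). In that case the normal cone of $\widehat{\Delta}$ at $(A,\theta(A))$ would be strictly larger than the transport of $\sigma$, the $\pi_*h_i$ would fail to be dual polytopes of a single fan refining the normal fan of $\widehat{\Delta}$ near this vertex, and the application of Lemma~6.1 breaks down. The paper sidesteps this entirely by working with $\Delta_v$ rather than $\widehat{\Delta}$: Lemma~6.1 is applied at place $v$ only, where $v$-interiority \emph{does} give the required genericity, and the contribution from other places enters only through the shear $\sE$ inside $f$. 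You would need to either prove the affineness claim (which I believe is false in general) or restructure the argument to stay at place $v$. The "lattice bookkeeping" issue you flag — the rescaling by $n_v$ that $\pi$ introduces in the $\bbR_v$-direction, changing primitives and determinants — is also non-trivial and is left unresolved, whereas the paper's change of variable never leaves $N_\bbR\oplus\bbR_v$ and so never faces it.
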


\begin{proof}
    Extend the function $\theta - \theta_v$ to a continuous function $\sE$ on $V$. Define $\widetilde{f}(v,t) = f(v,t+\sE(v))$. Then one easily sees that $\partial_1\dots\partial_{t+1}(I_f)(\widehat{\Delta}) = \partial_1\dots\partial_{t+1}(I_{\widetilde{f}})(\Delta_v)$. We can then apply \cite[Lemma 6.1]{toricbundles} to find that the derivative is
    \[\widetilde{f}(A,\theta_v(A))\cdot |\det(e_1,\dots,e_{t+1})| = f(A, \theta(A))\cdot |\det(e_1,\dots,e_{t+1})|.\]
\end{proof}

If $\ov{\Delta}$ is a polytope in $\aP^+_{\widetilde{\Sigma}}$ and $\tau_{1},\dots,\tau_{r}$ do not span a cone in $\widetilde{\Sigma}$, then we have
    \eqn{\partial^{k_1}_{1}\dots\partial^{k_r}_{r}(I_f)(\ov{\Delta}) = 0}
for any tuple of $k_i \geq 1$.

\begin{corollary}
    \label{cor:aI}Let $\tau_1,\dots\tau_{t+1}$ be rays spanning a maximal cone $\sigma$ in $\widetilde{\Sigma}$ at a place $v$ and $\ov{\Delta}\in\aP^+_{\widetilde{\Sigma}}$ a $v$-interior polytope. Suppose that $\sigma$ is dual to the vertex $(A,\theta_v(A))\in \ov{\Delta}_v$. Then, we have
    \eqn{\partial_1\dots\partial_{t+1}(\aI_\gamma)(\ov{\Delta}) = i\ac(A)^{i-1}\cdot[\infty]\cdot\gamma \cdot |\det(e_{i_1},\dots,e_{i_r})|.}
\end{corollary}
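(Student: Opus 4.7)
The plan is to use the decomposition $\aI_f(\ov{\Delta}) = I_f(\Delta) + I_{f'}(\widehat{\Delta})$ established in the proof of Theorem \ref{thm:arithmetic_polynomial_extension}, applied to $f(m,x) = (\ac(m) + x[\infty])^i \gamma$ so that $f'(m,x) = i(\ac(m) + x[\infty])^{i-1}[\infty]\gamma$. The identity is derived by the fundamental theorem of calculus for polytopes with nonnegative local roof functions, but it extends to all of $\aP$ because both sides are polynomial functions, which suffices to compute partial derivatives at a $v$-interior $\ov{\Delta}\in\aP^+_{\widetilde\Sigma}$.

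The first step is to show that the contribution from $I_f$ vanishes. Since $I_f(\ov{\Delta}) = I_f(\Delta)$ depends only on the underlying virtual polytope, $\partial_j I_f(\ov{\Delta})$ depends only on the underlying virtual polytope of $h_j$. As $\tau_1,\dots,\tau_{t+1}$ span a $(t+1)$-dimensional cone in $N_\bbR\oplus\bbR$ while $N_\bbR\times 0$ has dimension $t$, at least one ray $\tau_j$ lies outside $N_\bbR\times 0$. For such a ray, the dual virtual polytope $h_j$ has support function vanishing on every ray of $\Sigma$, so its underlying virtual polytope is zero; hence $\partial_j I_f(\Delta) = 0$ and therefore $\partial_1\dots\partial_{t+1} I_f(\Delta)=0$.

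The second step applies Proposition \ref{lemmaI} to $I_{f'}(\widehat{\Delta})$, using that $\sigma=\langle \tau_1,\dots,\tau_{t+1}\rangle$ is a maximal cone of $\widetilde\Sigma_v$ dual to the vertex $(A,\theta_v(A))\in \Delta_v$:
\[
\partial_1\dots\partial_{t+1} I_{f'}(\widehat{\Delta}) = f'(A,\theta(A))\cdot |\det(e_1,\dots,e_{t+1})|.
\]
Finally, I would simplify $f'(A,\theta(A)) = i(\ac(A) + \theta(A)[\infty])^{i-1}[\infty]\gamma$ by using $[\infty]^2 = 0$ in $\ArChow^*(\sB)_\bbR$. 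This vanishing follows because $[\infty] = \ac_1(\sO,h)$ for a constant metric $h$ on the trivial line bundle, whose curvature form is zero, and the recursive formula for $\ac_1$ intersection with an arithmetic cycle $(Z,g_Z)$ produces $(0, \omega_{(\sO,h)}\, g_Z) = 0$. The binomial expansion of $f'(A,\theta(A))$ therefore collapses to $i\,\ac(A)^{i-1}[\infty]\gamma$, giving the stated formula.

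The only nontrivial point is arranging the identity $\aI_f = I_f + I_{f'}$ to make sense on the full vector space $\aP$ of virtual polytopes, which is where the derivatives are taken; this is handled by polynomiality. The vanishing argument for the $I_f$ term and the $[\infty]^2 = 0$ simplification are then routine once the decomposition and Proposition \ref{lemmaI} are in place.
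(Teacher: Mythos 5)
Your proposal follows the paper's proof essentially verbatim: both use the decomposition $\aI_f(\ov{\Delta}) = I_f(\Delta) + I_{f'}(\widehat{\Delta})$ from the proof of Theorem \ref{thm:arithmetic_polynomial_extension} with $f(m,x) = (\ac(m) + x[\infty])^i\gamma$, observe that $\partial_1\dots\partial_{t+1}I_f(\Delta) = 0$ because at least one ray lies outside $V\times 0$, apply Proposition \ref{lemmaI} to the $I_{f'}$ term, and collapse $f'(A,\theta(A))$ via $[\infty]^2 = 0$. Your explanation of the vanishing of the $I_f$ term (one ray has trivial underlying virtual polytope) is correct and adds useful detail the paper compresses. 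One small imprecision: in your justification of $[\infty]^2 = 0$ you write that $\ac_1(\sO,h)[(Z,g_Z)]$ equals $(0,\omega_{(\sO,h)}g_Z)$, but the formula for $\ac_1$ also contributes a term $-\log|s|\,\delta_{Z(\bbC)}$, which is nonzero for the constant metric with $h_{[\infty]}=1$; the correct reason $[\infty]^2 = 0$ is that after the first intersection the divisor part is already trivial, so in the second intersection both the $\div(s)$ and $-\log|s|\,\delta$ contributions vanish, and the $\omega\,g$ contribution vanishes because the curvature is zero. Since the paper asserts $[\infty]^2=0$ without proof, this does not affect the validity of your argument.
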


\begin{proof}
    Recall the decomposition $\aI_f(\ov{\Delta}) = I_f(\Delta) + I_{f'}(\widehat{\Delta})$. We observe that
    \[
        \partial_1\dots\partial_{t+1} I_f(\Delta) = 0
    \]
    since at least one of the rays does not lie in $V \times 0$. Note that $\aI_\gamma = \aI_f$ for $f(m,t)=(\ac(m) + t [\infty])^i\gamma$.
    
    We compute $f'(m,t) = i(\ac(x) + \theta [\infty])^{i-1}[\infty]\gamma = i\ac(x)^{i-1}[\infty]\gamma$. The last equality follows since $[\infty]^2 = 0$ in the Chow ring.
\end{proof}

If $\ov{\Delta}$ is a polytope in $\aP^+_{\widetilde{\Sigma}}$ and $\tau_{1},\dots,\tau_{r}$ do not span a cone in $\widetilde{\Sigma}$, then we have
    \eqn{\partial^{k_1}_{1}\dots\partial^{k_r}_{r}(\aI_\gamma)(\ov{\Delta}) = 0}
for any tuple of $k_i \geq 1$.

\subsection{Differentiation of $\aF_\gamma$}
Let us consider first the squarefree case.

\begin{lemma}
    \label{lemmF}
    Let $\tau_1,\dots\tau_{t+1}$ be rays spanning a maximal cone $\sigma$ in $\widetilde{\Sigma}$ at a place $v$ and $\ov{\Delta}\in\aP^+_{\widetilde{\Sigma}}$ a $v$-interior polytope. Suppose that $\sigma$ is dual to the vertex $(A,\theta_v(A))\in \ov{\Delta}_v$. Then, 
    \eqn{\partial_1\dots\partial_{t+1}(\aF_\gamma)(\ov{\Delta}) = \frac{(t+i)!}{i!}\cdot i \ac(A)^{i-1}[\infty]\gamma\cdot |\det(e_{i_1},\dots,e_{i_r})|.}
    
    If $\ov{\Delta}$ is a polytope in $\aP^+_{\widetilde{\Sigma}}$ and $\tau_{1},\dots,\tau_{r}$ do not span a cone in $\widetilde{\Sigma}$, then we have
    \eqn{\partial^{k_1}_{1}\dots\partial^{k_r}_{r}(\aF_\gamma)(\ov{\Delta}) = 0}
    for any tuple of $k_i \geq 1$.
\end{lemma}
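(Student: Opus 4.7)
The plan is to exploit that $\aF_\gamma$ is a homogeneous polynomial of degree $t+i$ on $\aP$---this follows from the additivity of $\arho$ together with the multilinearity of the arithmetic intersection pairing (Proposition \ref{prop:intersect_integrable_line_bundles_with_cycles})---and to express partial derivatives via the polarization formula. For the vanishing assertion, every summand appearing in the polarization of $\partial_1^{k_1}\cdots\partial_r^{k_r}\aF_\gamma(\ov{\Delta})$ contains $\arho(h_1)\cdots\arho(h_r)$ as a factor, and this product vanishes by Proposition \ref{empty_intersection} whenever $\tau_1,\ldots,\tau_r$ do not span a cone of $\widetilde{\Sigma}$.

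For the nonvanishing assertion, polarization applied to $\aF_\gamma(\ov{\Delta})=\adeg(\arho(\ov{\Delta})^{t+i}\pi^*\gamma)$ gives
\[
\partial_1\cdots\partial_{t+1}\aF_\gamma(\ov{\Delta})=\frac{(t+i)!}{(i-1)!}\adeg\bigl(\arho(\ov{\Delta})^{i-1}\arho(h_1)\cdots\arho(h_{t+1})\pi^*\gamma\bigr).
\]
Since $\frac{(t+i)!}{(i-1)!}=i\cdot\frac{(t+i)!}{i!}$, the lemma is equivalent to the identity
\[
\adeg\bigl(\arho(\ov{\Delta})^{i-1}\arho(h_1)\cdots\arho(h_{t+1})\pi^*\gamma\bigr)=|\det(e_1,\ldots,e_{t+1})|\cdot\ac(A)^{i-1}\cdot[\infty]\cdot\gamma.
\]

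To prove this, I would first reduce to the case of algebraic metrics on $\aTbun$ via the approximation argument used in Proposition \ref{empty_intersection} and work on a projective model $\toricmod\to\sB$. Because $\sigma$ has dimension $t+1$ whereas the recession fan $\Sigma$ has dimension $t$, at least one of $\tau_1,\ldots,\tau_{t+1}$ is vertical; hence the orbit closure $V(\sigma)$ on the model at place $v$ projects surjectively onto $\sB$, and a standard simplicial toric computation identifies its pushforward as
\[
\pi_*\bigl(\arho(h_1)\cdots\arho(h_{t+1})\bigr)=|\det(e_1,\ldots,e_{t+1})|\cdot[\infty],
\]
the determinant being the transverse intersection multiplicity of toric divisors along $\sigma$ and $[\infty]$ capturing that the supporting component lies vertically at $v$. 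An adelic version of \cite[Proposition 2.3.3]{toricheights}, obtained by applying the construction in Proposition \ref{prop:divisors_on_toricbun} to the vertex $(A,\theta_v(A))\in\ov{\Delta}_v$ dual to $\sigma$, gives $\arho(\ov{\Delta})|_{V(\sigma)}=\ac(A)+\theta_v(A)[\infty]$. Substituting into the projection formula yields
\[
\adeg\bigl(\arho(\ov{\Delta})^{i-1}\arho(h_1)\cdots\arho(h_{t+1})\pi^*\gamma\bigr)=|\det|\cdot\adeg\bigl((\ac(A)+\theta_v(A)[\infty])^{i-1}[\infty]\gamma\bigr),
\]
and expanding the inner power and invoking $[\infty]^2=0$ annihilates every $\theta_v(A)$-term, giving the desired identity.

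The hard part will be to rigorously establish the cycle identity $\pi_*\bigl(\arho(h_1)\cdots\arho(h_{t+1})\bigr)=|\det|\cdot[\infty]$ on a suitable projective model. This requires a careful case analysis according to which of the $\tau_i$ are classical versus vertical and according to whether $v$ is archimedean or non-archimedean, and correctly extracting the arithmetic class $[\infty]$ from the vertical contributions at each place; the underlying combinatorics parallel those in the proof of \cite[Lemma 6.1]{toricbundles}, and the approximation from integrable to algebraic metrics proceeds exactly as in the proof of Proposition \ref{empty_intersection}.
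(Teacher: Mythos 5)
Your proof is essentially the paper's argument: the vanishing case goes through Proposition \ref{empty_intersection}, and for the nonvanishing case both replace $\arho(\ov{\Delta})$ in the polarized product $\arho(\ov{\Delta})^{i-1}\arho(h_1)\cdots\arho(h_{t+1})\pi^*\gamma$ by $\pi^*\widehat{c}(A,\theta_v(A))$ --- you via restriction to $V(\sigma)$, the paper via the equivalent trick of subtracting the vertex and noting $\arho(\widetilde{\Delta})\arho(h_1)\cdots\arho(h_{t+1})=0$ for $\widetilde{\Delta}=\ov{\Delta}-(A,\theta_v(A))$ --- and then finish with the projection formula and $[\infty]^2=0$. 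The only small slip is that the $g$-dimensional cycle $V(\sigma)$ surjects onto the special fibre $\sB_v$ rather than onto $\sB$ itself (this is what makes $\pi_*(\arho(h_1)\cdots\arho(h_{t+1}))$ a multiple of the class supported at $v$, which plays the role of $[\infty]$ after intersecting); otherwise your version is sound and in fact corrects the paper's typo $\arho(\ov{\Delta})^i$ to the dimensionally correct $\arho(\ov{\Delta})^{i-1}$.
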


\begin{proof}
    For the vanishing result it is clearly sufficient to consider the case $k_i = 1$ for all $i$. Denote by $\aD_i$ the divisor associated to $h_i$ on $\toricbun_{\widetilde{\Sigma}}$.

    We expand the polynomial $F_\gamma$ at $\Delta$ in order to compute the derivative.
    \begin{align*}
        \aF_\gamma(\ov{\Delta} + \sum_i \lambda_i \arho(\aD_i)) &= (\arho(\ov{\Delta}) + \sum_i \lambda_i \arho(\aD_i))^{t+i}\pi^*(\gamma)\\
        &= \sum_{\alpha_0 + \dots + \alpha_s = t+i} \binom{t+i}{\alpha_0, \dots, \alpha_s}\arho(\ov{\Delta})^{\alpha_0}\arho(\aD_1)^{\alpha_1}\cdots\arho(\aD_r)^{\alpha_r} \pi^*(\gamma)\lambda^{\alpha_1}_1 \cdots \lambda^{\alpha_r}_r.
    \end{align*}
    The expression $\partial_1\dots\partial_r(\aF_\gamma)(\Delta)$ shows up as the coefficient of $\lambda_1 \cdots \lambda_r$. Since the intersection of the $\aD_i$ is empty it follows by Proposition \ref{empty_intersection} that the arithmetic intersection also vanishes.

    We proceed to the case, where $\tau_1,\dots\tau_{t+1}$ span a cone. We have
    \eqn{
     \partial_1\dots\partial_{t+1}F_\gamma(\Delta) = \frac{(t+i)!}{i!} \arho(\ov{\Delta})^i  \arho(\widehat{D}_1)\cdots\arho(\widehat{D}_{t+1}) \pi^*(\gamma).
    }
Let $\widetilde{\Delta}$ be the virtual adelic polytope $\ov{\Delta}-(A,\theta_v(A))$. Then, 
\eqn{h_{\widetilde{\Delta}}(e_1)= \dots = h_{\widetilde{\Delta}}(e_{t+1}) = 0}
since the vertex of $\widetilde{\Delta}$ corresponding to $A$ is sent to the origin. Therefore, $\widetilde{\Delta}$ is the linear combination of rays not belonging to $\sigma$. In particular,
\eqn{\arho(\widetilde{\Delta})\arho(\widehat{D}_1)\cdots\arho(\widehat{D}_{t+1}) = 0.}
We compute
\begin{align*}
    &\arho(\ov{\Delta})^i \arho(\widehat{D}_1)\cdots\arho(\widehat{D}_{t+1}) \pi^*(\gamma)\\=& \arho(\widetilde{\Delta} + (A,\theta_v(A))^i\arho(\widehat{D}_1)\cdots\arho(\widehat{D}_{t+1}) \pi^*(\gamma)\\=& \arho((A,\theta_v(A)))^i\arho(\widehat{D}_1)\cdots\arho(\widehat{D}_{t+1}) \pi^*(\gamma).
\end{align*}
We now apply that $\arho(A,\theta_v(A)) = \pi^* \widehat{c}(A,\theta_v(A))$ and an explicit projection formula. We restrict to the case of model metrics by approximation. The intersection of $\widehat{D}_1,\dots,\widehat{D}_{t+1}$ on $\toricvar_{\Sigma}$ is given as the $|\det(e_{i_1},\dots,e_{i_t})|$-multiple of a closed point at the place $v$. In particular, $\arho(\widehat{D}_1)\dots\arho(\widehat{D}_{t+1})$ is the multiple of a horizontal cycle mapping to the special fibre at $v$ of $\sB$. Hence,
\[
\arho(\widehat{D}_1)\cdots\arho(\widehat{D}_{t+1})\pi^*(\widehat{c}(A,\theta_v(A))^i \gamma)
\]
is represented by the multiple of a cycle that maps one to one to the intersection of $\widehat{c}(A,\theta_v(A))^i \gamma$ with the special fibre at a place. The archimedean case follows by the projection formula for differential forms.
\end{proof}

\begin{corollary}
    For any $i \leq g+1$ and $\gamma \in \widehat{CH}^{g+1-i}(A)$ and any squarefree differential monomial $\partial_I$ of order $t+1$, we have
\eqn{(t+i)!\cdot\partial_I \aI_\gamma(\ov{\Delta}) = i!\cdot\partial_I \aF_\gamma(\ov{\Delta}).}
\end{corollary}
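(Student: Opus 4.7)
The plan is to deduce this corollary directly by juxtaposing the computations of $\partial_I \aI_\gamma$ from Corollary~\ref{cor:aI} with those of $\partial_I \aF_\gamma$ from Lemma~\ref{lemmF}, handling the two combinatorial cases (rays spanning or not spanning a cone), and then using polynomiality to remove the $v$-interior hypothesis. Since both $\aI_\gamma$ and $\aF_\gamma$ are polynomial functions of degree $t+i$ on $\aP_{\widetilde{\Sigma}}$ (by Corollary~\ref{cor:aIispolynomial} for the left-hand side and by multilinearity of arithmetic intersection for the right-hand side), so are the partial derivatives $\partial_I$ of order $t+1$. It thus suffices to verify the identity on a non-empty open subset of $\aP^+_{\widetilde{\Sigma}}$.

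First I would choose a simplicial adelic fan $\widetilde{\Sigma}$ refining the data of interest and restrict attention to $v$-interior polytopes $\ov{\Delta}\in\aP^+_{\widetilde{\Sigma}}$, where $v$ is any place at which the support of $\partial_I$ lives. Write $\partial_I = \partial_1\cdots\partial_{t+1}$ corresponding to rays $\tau_1,\dots,\tau_{t+1}$. If the $\tau_j$ do not span a cone of $\widetilde{\Sigma}$, then both $\partial_I \aI_\gamma(\ov{\Delta})=0$ (by Corollary~\ref{cor:aI}) and $\partial_I \aF_\gamma(\ov{\Delta})=0$ (by Lemma~\ref{lemmF}, which in turn relies on Proposition~\ref{empty_intersection}), so the identity holds trivially. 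If instead $\tau_1,\dots,\tau_{t+1}$ span a maximal cone of $\widetilde{\Sigma}_v$, dual to a vertex $(A,\theta_v(A))\in \ov{\Delta}_v$, then Corollary~\ref{cor:aI} gives
\[
\partial_I \aI_\gamma(\ov{\Delta}) = i\,\widehat{c}(A)^{i-1}\cdot[\infty]\cdot\gamma\cdot|\det(e_1,\dots,e_{t+1})|,
\]
while Lemma~\ref{lemmF} gives
\[
\partial_I \aF_\gamma(\ov{\Delta}) = \frac{(t+i)!}{i!}\cdot i\,\widehat{c}(A)^{i-1}\cdot[\infty]\cdot\gamma\cdot|\det(e_1,\dots,e_{t+1})|.
\]
Multiplying the first expression by $(t+i)!$ and the second by $i!$ yields the same quantity, establishing the desired identity at $v$-interior points.

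Finally, I would extend the identity to all of $\aP_{\widetilde{\Sigma}}$ by noting that the set of $v$-interior polytopes is an open subset of the cone $\aP^+_{\widetilde{\Sigma}}$ (for any chosen $v$) and that this cone has non-empty interior in the finite-dimensional vector space $\aP_{\widetilde{\Sigma}}$. Since two polynomial functions on a finite-dimensional real vector space that agree on a non-empty open set agree everywhere, the equality of the polynomial functions $(t+i)!\,\partial_I\aI_\gamma$ and $i!\,\partial_I\aF_\gamma$ on $\aP_{\widetilde{\Sigma}}$ follows, and letting $\widetilde{\Sigma}$ vary gives the equality on all of $\aP$. I do not expect any serious obstacle here: the substance of the argument is already contained in Corollary~\ref{cor:aI} and Lemma~\ref{lemmF}, and the present corollary is essentially bookkeeping with the numerical factors, with the only minor subtlety being the density/polynomial extension from $v$-interior polytopes to arbitrary ones.
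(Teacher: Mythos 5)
Your proof is correct and takes essentially the same approach as the paper, which states this corollary without a separate proof precisely because it is the immediate juxtaposition of Corollary~\ref{cor:aI} and Lemma~\ref{lemmF}: in both the cone-spanning case (where the formulas differ by exactly $(t+i)!/i!$) and the non-spanning case (where both derivatives vanish). The extra care you take in spelling out the polynomial-extension step from $v$-interior polytopes to all of $\aP_{\widetilde{\Sigma}}$ is a reasonable bit of bookkeeping that the paper leaves implicit.
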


We now need to consider partial derivatives which are not squarefree. We treat them by induction on the multiplicity. The multiplicity of a multiset is the difference of the cardinality of the multiset and the cardinality of the underlying set.

Let $I$ be a multiset of rays in $\widetilde{\Sigma}$. As induction hypothesis we assume that $(t+i)!\cdot\partial_I \aI_\gamma(\Delta) = i!\cdot\partial_I \aF_\gamma(\Delta)$ for all differential monomials $\partial_I$ of multiplicity $\mu-1 \geq 0$. We need to show the same equality for multiplicity $\mu$. Let $\tau_1,\dots,\tau_r$ be rays in $\widetilde{\Sigma}$ forming a cone. By relabeling, we may assume $\partial_I = \partial^{k_1}_1 \dots \partial^{k_r}_r$ and $k_1 > 1$. We may restrict to the case where $\tau_1,\dots, \tau_r$ form a cone in $\widetilde{\Sigma}$.

We express $\partial_1$ in terms of a Lie derivative $L_v$ for $v \in M_{\bbR} \times \bbR$ and other partial derivatives. As $e_1,\dots, e_r$ generate a cone in the simplicial fan $\widetilde{\Sigma}$ they can be completed to a basis $e_1,\dots, e_{t+1}$ of $N_\bbR \oplus \bbR$. The first vector of the dual basis satisfies $\langle v,e_1\rangle = 1$ and $\langle v,e_j\rangle = 0$ for $j=2, \dots,r$. The vector $v$ is of the form $\sum^s_{i=1} \langle v,e_i\rangle h_i$ for further prime virtual polytopes $h_{r+1},\dots,h_s$. We conclude that $L_v = \sum^s_{i=1} \langle v,e_i\rangle \partial_i$ and thus $\partial_1 = L_v - \sum_{j>r} \langle v,e_i\rangle \partial_j$. We get:
\begin{align*}
\partial_I = \partial^{k_1}_1 \dots \partial^{k_r}_r &= \left( L_v - \sum_{j>r} \langle v,e_i\rangle \partial_j \right)\partial^{k_1 -1}_1 \dots \partial^{k_r}_r\\
&=L_v \partial^{k_1 -1}_1 \dots \partial^{k_r}_r - \sum_{j>r} \langle v,e_i\rangle \partial^{k_1 -1}_1 \dots \partial^{k_r}_r\partial_j.
\end{align*}
We may apply the induction hypothesis to the second term.

We use induction on $i$ to show the following statement analogous to \cite[Lemma 6.5]{toricbundles}.

\begin{lemma}
    In the above situation, we have $(t+i)!\cdot L_v \aI_\gamma(\ov{\Delta}) = i!\cdot L_v \aF_\gamma(\ov{\Delta})$.
\end{lemma}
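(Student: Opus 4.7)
The plan is to interpret $L_v$ for $v = (m_0, t_0) \in M_\bbR \oplus \bbR$ as the derivative of translation in the space of virtual adelic polytopes: for a polynomial function $F$ on $\aP$, one sets $L_v F(\ov{\Delta}) = \tfrac{d}{ds}\big|_{s=0} F(\ov{\Delta} + sv)$, where $\ov{\Delta} + sv$ denotes the polytope obtained by shifting the underlying polytope by $sm_0$ and the local roof function at the place $w$ where the cone $\sigma$ sits by $st_0$. The crucial geometric identification, following from the construction of $\arho$ in Section \ref{sec:toric_bundles} together with the definitions of $\ac$ and $[\infty]$, is that this translation affects $\arho(\ov{\Delta})$ by a class pulled back from $\B$: the $M_\bbR$-component contributes the principal-divisor class of $\chi^{m_0}$ on the toric fibre, which on $\toricbun$ becomes $\pi^*\ac(m_0)$, while shifting the local roof function at place $w$ by $t_0$ shifts the global roof function by a constant proportional to $t_0$, matching a multiple of $t_0 \pi^*[\infty]$. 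In short, $L_v \arho(\ov{\Delta}) = \pi^*\beta$ for some class $\beta = \ac(m_0) + c\, t_0 [\infty]$ with $c$ a positive constant determined by the weight of $w$.

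Expanding $\arho(\ov{\Delta} + sv)^{t+i}$ in $s$ and differentiating at $s = 0$, one gets
\[
    L_v \aF_\gamma(\ov{\Delta}) = (t+i)\, \adeg\!\left( \arho(\ov{\Delta})^{t+i-1}\, \pi^*(\beta\gamma) \right) = (t+i)\, \aF_{\beta\gamma}(\ov{\Delta}),
\]
where on the right the functional $\aF_{\beta\gamma}$ is taken at the reduced index $i-1$, since $\beta\gamma \in A^{g+2-i}(\sB) = A^{g+1-(i-1)}(\sB)$. A parallel computation for $\aI_\gamma$, using a change of variables to pull the translated integral over $\Delta + sm_0$ back to $\Delta$ and tracking the translated roof function, yields
\[
    L_v \aI_\gamma(\ov{\Delta}) = i \int_\Delta (\ac(m) + \theta(m)[\infty])^{i-1}\, \beta\gamma\, dm = i\, \aI_{\beta\gamma}(\ov{\Delta}).
\]

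Therefore the desired identity $(t+i)! L_v \aI_\gamma(\ov{\Delta}) = i! L_v \aF_\gamma(\ov{\Delta})$ collapses, after cancelling the common factor $i(t+i)$, to
\[
    (t+i-1)!\, \aI_{\beta\gamma}(\ov{\Delta}) = (i-1)!\, \aF_{\beta\gamma}(\ov{\Delta}),
\]
which is precisely the statement of Theorem \ref{bkk} applied to the class $\beta\gamma$ at index $i - 1$. Since the outer induction is on $i$, with the base case $i = 0$ already handled by the projection-formula argument in the overview, this equality holds by the induction hypothesis, completing the proof. The main obstacle is establishing the translation identification $L_v \arho(\ov{\Delta}) = \pi^*\beta$ carefully, including the correct normalization in the $[\infty]$-direction across places; however, because the same constant $c$ appears on both sides of the formulas for $L_v \aF_\gamma$ and $L_v \aI_\gamma$, it cancels in the final comparison and the argument is unaffected.
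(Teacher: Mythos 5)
Your proof is correct and follows essentially the same route as the paper: interpret $L_v$ as the translation derivative, observe that translation by $sv$ replaces $\arho(\ov{\Delta})$ by $\arho(\ov{\Delta}) + s\pi^*(\ac(x) + t[\infty])$ up to normalization, expand in $s$ to obtain $L_v \aF_\gamma = (t+i)\aF_{\widehat{c}(v)\gamma}$ and (after a change of variables) $L_v \aI_\gamma = i\aI_{\widehat{c}(v)\gamma}$, then invoke the induction hypothesis on $i$. The only difference is that you flag the normalization constant in the $[\infty]$-direction and note its cancellation, a care the paper glosses over but which does not affect the conclusion.
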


\begin{proof}
    We prove the statement by two direct computations. Write $v = (x,t)$.
    \begin{align*}
        L_v I_\gamma(\ov{\Delta}) &= \partial_s|_{s=0} \int_{\Delta +sx} \widehat{c}(m + \theta(x) +sm)^i\gamma dm\\ &= \partial_s|_{s=0} \int_{\Delta} \widehat{c}(m +sv)^i\gamma dm\\
        &=\partial_s|_{s=0} \int_{\Delta} \sum^i_{j=0} {i\choose j }s^j\widehat{c}(v)^j\widehat{c}(m)^{i-j} \gamma dm\\
        &=i\int_{\Delta} \widehat{c}(v)\widehat{c}(m)^{i-1}\gamma dm = i \aI_{\widehat{c}(v)\gamma}(\ov{\Delta}).
    \end{align*}
    On the other hand,
    \begin{align*}
        L_v \aF_\gamma(\ov{\Delta}) &= \partial_s|_{s=0} \arho(\ov{\Delta} +sv)^{t+i} \gamma = \partial_s|_{s=0} \sum^{t+i}_{j=0} {{t+i}\choose j} s^j\pi^*\widehat{c}(v)^j\arho(\ov{\Delta})^{t+i-j} \pi^*\gamma\\
        &=(t+i)\arho(\ov{\Delta})^{t+i-1} \pi^*(\widehat{c}(v)\gamma )\\
        &=(t+i)F_{\widehat{c}(v)\gamma}(\ov{\Delta}).
    \end{align*}
    By induction hypothesis,
    \[
    (t+i-1)! \aI_{\widehat{c}(v)\gamma}(\ov{\Delta}) = (i-1)!\aF_{\widehat{c}(v)\gamma}(\ov{\Delta}).
    \]
    Hence,
    \[
    (t+i)! L_v \aI_\gamma(\ov{\Delta}) = i (t+i)! \aI_{\widehat{c}(v)\gamma}(\ov{\Delta}) = (t+i) i! \aF_{\widehat{c}(v)\gamma}(\ov{\Delta}) = i! L_v \aF_\gamma(\ov{\Delta}).
    \]
\end{proof}

This finishes the proof.

\section{\label{sec:examples}Semiabelian varieties}

In this section we study the semiabelian varieties and their compactifications by applying the methods developed earlier in the article. This allows us to recover Chambert-Loir's computation of the height and the absolute minimum in \cite{cl2}.

The first result is the calculation of the Okounkov body and Boucksom-Chen transform for semiabelian varieties. Let us recall the setup in the introduction. 

Let $G$ be a semiabelian variety over a global field $\K$ with abelian quotient $A$ and split torus part $\T$. Suppose the isomorphism class as a torus bundle is given by a map $c:M \to A^\vee(\Kbar)$. Let $\sM$ denote an ample symmetric line bundle on $A$. Endowing it with canonical metrics gives it the structure of an adelically metrized line bundle $\ov{\sM}$. Let $\widehat{h}$ denote the N\'eron-Tate height with respect to $\sM$ on $A(\Kbar)\otimes \bbR$ as well as the induced map on $A^\vee(\Kbar)\otimes \bbR$ along the polarization $A(\Kbar)\otimes \bbR \to A^\vee(\Kbar)\otimes \bbR$. Let $\ov{G}$ be the compactification of $G$ with respect to a fan $\Sigma$ in $N_\bbR$. Let $D$ be an ample toric Cartier divisor on $\toricvar_\Sigma$ with Newton polytope $\Delta \subset M_\bbR$. After refining $\Sigma$, we assume it defines a smooth projective variety. Let $\ov{G}_\bullet$ be a flag on $\ov{G}$ of the type considered in Section \ref{sec:okounkov} consisting of a toric part $\toricvar_\bullet$ and an abelian part $A_\bullet$.

\begin{lemma}\label{semiabelian_okounkov_body}
    The Okounkov body of $\ov{G}$ with respect to $\pi^*\sM+\rho(D)$ decomposes as a product $\Delta\times \Delta_{A_\bullet}(\sM)$. The Boucksom-Chen transform satisfies 
    \[
    G_{\ov{G}_\bullet,\pi^*\ov{\sM}+\rho(D)^{\can}}(m,x) = - \widehat{h}(c(m)).
    \]
\end{lemma}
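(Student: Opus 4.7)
The plan is to deduce the lemma from the fibrewise computations of Theorems \ref{thm:geometric_okounkov_body} and \ref{thm:arithmetic_okounkov_body}, applied with $\B = A$, $\ov{L} = \ov{\sM}$, and the torus bundle $\aTbun$ attached to $c : M \to A^\vee(\Kbar)$. For the Okounkov body, Theorem \ref{thm:geometric_okounkov_body} identifies $\Delta_{\ov{G}_\bullet}(\pi^*\sM + \rho(D))$ with the closure of
\[
\{(m,x)\mid m \in \Delta,\ x\in \Delta_{A_\bullet}(\sM + c(m))\},
\]
with equality on the fibre over each $m$ for which $\sM + c(m)$ is big. Since $c(m) \in A^\vee(\Kbar)$ is algebraically (hence numerically) trivial, $\sM + c(m)$ has the same N\'eron--Severi class as the ample bundle $\sM$; in particular it is big, and by the numerical invariance of Okounkov bodies \cite[Proposition~4.1]{lazmus} its fibrewise Okounkov body coincides with $\Delta_{A_\bullet}(\sM)$ for every $m \in \Delta$, producing the product decomposition $\Delta \times \Delta_{A_\bullet}(\sM)$.

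For the arithmetic statement I would use the isometry $\rho(D)^{\can} \cong \arho(D^{\can})$ together with Theorem \ref{thm:arithmetic_okounkov_body} to obtain
\[
G_{\ov{G}_\bullet,\, \pi^*\ov{\sM} + \rho(D)^{\can}}(m,x) \;=\; \theta(m) \;+\; G_{A_\bullet,\, \ov{\sM} + \widehat{c}(m)}(x),
\]
where $\theta$ is the global roof function of $D^{\can}$. The canonical toric metric satisfies $\theta \equiv 0$ at every place: the defining relation $[n]^*D^{\can} \cong n D^{\can}$ forces the associated concave function on $N_\bbR$ to agree with the support function of $\Delta$, whose Legendre--Fenchel transform on $\Delta$ vanishes. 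Hence the first term drops out, and it remains to identify the fibrewise transform $G_{A_\bullet,\, \ov{\sM}+\widehat{c}(m)}$ with the constant $-\widehat{h}(c(m))$.

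For this last step, $\widehat{c}(m)$ is the canonically metrized Poincar\'e fibre at $c(m)$, so its height on $x \in A(\Kbar)$ is the N\'eron--Tate pairing $\langle x, c(m) \rangle_{\mathrm{NT}}$. Setting $y \defeq \phi_\sM^{-1}(c(m)) \in A(\Kbar) \otimes \bbQ$ and completing the square in the quadratic form $\widehat{h}$ yields
\[
h_{\ov{\sM} + \widehat{c}(m)}(x) \;=\; \widehat{h}(x+y) - \widehat{h}(y) \;=\; \widehat{h}(x+y) - \widehat{h}(c(m)),
\]
which is bounded below by $-\widehat{h}(c(m))$, with equality on the Zariski dense set of $x \in A(\Kbar) \otimes \bbQ$ for which $x+y$ is torsion. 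Hence $\zeta_{\abs}(\ov{\sM} + \widehat{c}(m)) = \zeta_{\ess}(\ov{\sM} + \widehat{c}(m)) = -\widehat{h}(c(m))$, and Theorem \ref{thm:ballay} applied to the semipositive bundle $\ov{\sM} + \widehat{c}(m)$ identifies these values with the maximum and the infimum of the concave function $G_{A_\bullet,\, \ov{\sM}+\widehat{c}(m)}$, forcing it to be constant equal to $-\widehat{h}(c(m))$. The main obstacle I anticipate is the careful identification $h_{\widehat{c}(m)}(x) = \langle x, c(m) \rangle_{\mathrm{NT}}$ and the subsequent square-completion, which rely on matching the normalizations of the canonical metric on the Poincar\'e bundle, of the N\'eron--Tate pairing between $A$ and $A^\vee$, and of $\widehat{h}$ on $A^\vee \otimes \bbR$ via $\phi_\sM$; once these conventions are fixed the rest is a routine combination of the general Okounkov-body theorems with Theorem \ref{thm:ballay}.
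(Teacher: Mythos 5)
Your proof of the product decomposition of the Okounkov body is the same as the paper's: both invoke Theorem \ref{thm:geometric_okounkov_body} and then the numerical invariance of Okounkov bodies from \cite[Proposition~4.1]{lazmus} to identify every fibre with $\Delta_{A_\bullet}(\sM)$.

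For the Boucksom--Chen transform, both you and the paper reduce to the fibrewise computation of $G_{A_\bullet,\,\ov{\sM}+\widehat{c}(m)}$ and then argue that this function is constant because $\zeta_{\abs}=\zeta_{\ess}$ and the transform is concave. The two proofs diverge in how they identify the constant. The paper goes through arithmetic volumes: the constant value is $\adeg((\ov{\sM}+\ov{\sQ})^{g+1})/((g+1)\deg((\sM+\sQ)^{g}))$, and the numerator is taken from \cite[Th\'eor\`eme~2.5]{cl2}. You instead compute the Zhang minima directly via the quadratic/linear decomposition of heights: writing $h_{\widehat{c}(m)}(x)$ as the N\'eron--Tate pairing $\langle x,c(m)\rangle_{\mathrm{NT}}$ and completing the square against $\widehat{h}$ to get $h_{\ov{\sM}+\widehat{c}(m)}(x)=\widehat{h}(x+y)-\widehat{h}(c(m))$ with $y=\phi_\sM^{-1}(c(m))$. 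This is a valid, arguably more self-contained alternative that trades the citation to Chambert-Loir's intersection number for a bookkeeping exercise with the normalizations of the canonical Poincar\'e metric, the polarization, and the pairing, exactly as you flag.

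One small imprecision: you locate the points achieving equality inside $A(\Kbar)\otimes\bbQ$, but heights are evaluated on $A(\Kbar)$. The cleanest fix is to note that $\widehat{h}(x+y)=0$ is equivalent to $x+y=0$ in $A(\Kbar)\otimes\bbQ$; taking $N$ with $Ny\in A(\Kbar)$, the set $\{x\in A(\Kbar): Nx+Ny\in A_{\tors}(\Kbar)\}$ is Zariski dense (it is a union of translates of $A[N]$ indexed by the torsion points), and on it $\widehat{h}(x+y)=0$. This restores the Zariski-density statement on actual $\Kbar$-points, after which your conclusion $\zeta_{\abs}=\zeta_{\ess}=-\widehat{h}(c(m))$ and constancy of the transform follow as you wrote.
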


\begin{proof}
    By Theorem \ref{thm:geometric_okounkov_body}, the fibers of the Okounkov body over $m \in \Delta$ are given by the Okounkov body $\Delta_{A_\bullet}(\sM+\sQ)$ for some numerically trivial line bundle $\sQ$ on $A$. By \cite[Proposition 4.1]{lazmus}, there is an equality of Okounkov bodies $\Delta_{A_\bullet}(\sM+\sQ) = \Delta_{A_\bullet}(\sM)$.

    In order to compute the Boucksom-Chen transform, we need to compute the Boucksom-Chen transform $G_{A_\bullet,\ov{\sM}+\ov{\sQ}}(x)$ for a numerically trivial line bundle $\sQ$ with its canonical metric. We note that $\ov{\sM}+\ov{\sQ}$ is semipositive and that $\zeta_{\ess}(\ov{\sM}+\ov{\sQ})=\zeta_{\abs}(\ov{\sM}+\ov{\sQ})$, since heights with respect to $\ov{\sM}+\ov{\sQ}$ are invariant under translation by the Zariski dense set of torsion points of $A$. By the results in Section \ref{prelim_okounkov}, the Boucksom-Chen transform is constant of value
    \[
        \frac{\adeg((\ov{\sM}+\ov{\sQ})^{g+1})}{(g+1)\deg((\sM+\sQ)^{g})}.
    \]
    Suppose that $\sQ$ is the image of $q\in A(\Kbar)$ under the polarization morphism $A \to A^\vee$. The intersection number $\adeg((\ov{\sM}+\ov{\sQ})^{g+1})$ is calculated in \cite[Th\'eor\`eme 2.5]{cl2} to be $-\frac{2\deg(\sM^{g})}{g}\widehat{h}_{\sM}(q)$. This implies
    \[
        \frac{\adeg((\ov{\sM}+\ov{\sQ})^{g+1})}{(g+1)\deg((\sM+\sQ)^{g})}= - \widehat{h}(q) = -\widehat{h}(\sQ).
    \]
\end{proof}

This can be applied to compute the height of a semiabelian variety.

\heightsemiabelian*

\begin{proof}
    By applying the results of Section \ref{prelim_okounkov} and Lemma \ref{semiabelian_okounkov_body}, we see that
    \begin{align*}
        &h_{\rho(D)^{\can}\otimes \pi^*\bar{\sM}}(\ov{G})\\ =& (d+1)!\int_{\Delta(\pi^*\sM+\rho(D))} G_{\ov{G}_\bullet,\pi^*\ov{\sM}+\rho(D)^{\can}}(x) dx
        \\=& -(d+1)!\int_{\Delta\times \Delta_{A_\bullet}(\sM)} \widehat{h}(c(m))d(m,x)\\
        =& -\frac{(d+1)!\deg(\sM)}{g!}\int_{\Delta\times \Delta_{A_\bullet}(\sM)} \widehat{h}(c(m))dm\\
        =& -\frac{(d+1)!\deg(\sM)}{g!\vol(\sM)}\int_{\Delta} \widehat{h}(c(m))dm\\
        =& -(d+1)!\int_{\Delta} \widehat{h}(c(m))dm.
    \end{align*}
\end{proof}

We proceed to study the successive minima of $\ov{G}$. Let $\sF(\Delta)^{i}$ denote the set of $i$-dimensional faces of $\Delta$.

\minimasemiabelian*

\begin{proof}
    By Section \ref{sec:suc_min} it suffices to study successive minima on each locally closed subvariety of $\ov{G}$ corresponding to a torus orbit. Since the functions of the form $h_\Delta$ are invariant under the action of torsion points on $A$ each subvariety each such filtration of $A$ has only one step. In particular, the first step in the height filtration maps surjectively onto $A$ and hence $\zeta_i(\bar{G}) = \zeta_{1}(\bar{G})$ for $i\leq g+1$.

    We start out by noting that the formula holds for $\zeta_{\ess}$ by applying Lemma \ref{semiabelian_okounkov_body} and Theorem \ref{thm:ballay}. By this discussion it suffices to show that the essential minimum of the restriction of $\pi^* \ov{\sM} + \rho(D)^{\can}$ to the closed toric subbundle $\calV(\sigma)$ given by the cone $\sigma$ is $-\min_{m \in F_\sigma} \widehat{h}(c(m))$, where $F_\sigma$ is the face dual to $\sigma$. For this it suffices to understand the Boucksom-Chen transform on each closed toric subbundle.

    Let us understand the restriction of the $\T$-Cartier divisor $D$ on $\toricvar$ to the closure of a $\T$-orbit $V(\sigma)$. By \cite[Proposition 3.4.11]{toricheights}, its restriction is given by the divisor on $V(\sigma)$ corresponding to the face $F_\sigma$ dual to $\sigma$. The restriction of $\rho(D)$ to the toric subbundle is given precisely by $\rho(F_\sigma)$. Hence, the Okounkov body and the Boucksom-Chen transform of the restriction are precisely the restriction to the preimage of $F_\sigma$ and the claim follows. Suppose otherwise that $m_\sigma\in F_\sigma$. Then, the Cartier divisor associated to $F_\sigma-m_\sigma$ intersects $V(\sigma)$ properly. Hence, the restriction of $\rho(D)$ to $\calV(\sigma)$ is given by $\rho(F_\sigma-m_\sigma) + \pi^* c(m_\sigma)$. Hence, the Okounkov body and the Boucksom-Chen transform of the restriction are (up to translation) the restriction to the preimage of $F_\sigma$ and the claim follows.
\end{proof}

We may now apply the above for the specific compactification studied in \cite{cl2}. Let us recall their setup.

Let $G$ be a semiabelian variety with torus $\bbG_m^t$ and associated isomorphisms $N \cong \bbZ^t$ and $M \cong \bbZ^t$. Let $A$ denote the abelian quotient of $G$ and $\sM$ an ample symmetric line bundle on $A$ inducing a polarization morphism $\phi:A \to A^\vee$. Let $e_1,\dots,e_t$ denote the standard basis of $M$. Let $q_1,\dots,q_t \in A(\Kbar)$ satisfying $c(e_i) = \phi(q_i)$. The compactification is taken to be $\bbG^t_m \subset \bbP^n$. If $\Delta^t$ denotes the unit simplex, we let $\Delta = (t+1) \Delta^t - \sum_i e_i$. The line bundle on $\ov{G}$ that Chambert-Loir denotes as $\sL$ is $\rho(\Delta)$.

\begin{lemma}[Lemma 4.5 \cite{cl2}]
    Let $q = \sum_i q_i$. Then, one has
    \[
        \zeta_{\abs}(\rho(\Delta)^{\can} + \pi^* \ov{\sM}) = -\max\{\widehat{h}(q),\underset{i}{\max}\widehat{h}(q - (t+1)q_i)\}.
    \]
\end{lemma}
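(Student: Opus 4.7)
The plan is to specialize Theorem \ref{thm:successive_minima_formula_semiabelian} to the present setting. The problem reduces to two steps: enumerating the vertices of the Newton polytope $\Delta = (t+1)\Delta^{t} - \sum_{i=1}^{t} e_i$ (a translate of the anticanonical polytope of $\bbP^{t}$ placing the origin at the centroid), and evaluating the quadratic form $\widehat{h}\circ c$ at each vertex.

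First I would list the vertices. Since $\Delta^{t}$ has vertices $0, e_1, \ldots, e_t$, the polytope $\Delta$ has $t+1$ vertices
\[
v_0 = -\sum_{i=1}^{t} e_i, \qquad v_j = (t+1)\, e_j - \sum_{i=1}^{t} e_i \quad (1 \leq j \leq t).
\]
Using linearity of $c$ and the relations $c(e_i) = \phi(q_i)$, together with the fact that $\widehat{h}$ factors through the polarization $\phi$ and is invariant under negation, I obtain
\[
\widehat{h}(c(v_0)) = \widehat{h}(-\phi(q)) = \widehat{h}(q), \qquad \widehat{h}(c(v_j)) = \widehat{h}((t+1)\phi(q_j) - \phi(q)) = \widehat{h}(q - (t+1) q_j).
\]

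Finally I would invoke Theorem \ref{thm:successive_minima_formula_semiabelian}, taking $F$ to range over the $0$-dimensional faces of $\Delta$ (the vertices), so each inner minimization collapses to a single vertex value, and the outer $-\max$ reproduces $-\max\{\widehat{h}(q), \max_i \widehat{h}(q - (t+1)q_i)\}$. The main point requiring care is the index bookkeeping: one must check that the minimum appearing on the left-hand side of the lemma matches the top Zhang minimum of Theorem \ref{thm:successive_minima_formula_semiabelian} (the index $i = t+g+1$, for which $t+g+1-i = 0$ selects vertices), keeping in mind the dimension $t+g$ of $\bar G$ and the corresponding identification with Chambert-Loir's convention from \cite{cl2}. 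Once this identification is in place, the calculation is a direct substitution of the vertex data into the theorem.
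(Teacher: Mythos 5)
Your proof is correct and follows exactly the same route as the paper: specialize Theorem \ref{thm:successive_minima_formula_semiabelian}, enumerate the vertices $v_0 = -\sum_i e_i$ and $v_j = (t+1)e_j - \sum_i e_i$ of $\Delta$, and evaluate $\widehat h \circ c$ there using $c(e_i) = \phi(q_i)$, the fact that $\widehat h$ factors through $\phi$, and evenness of $\widehat h$; these are precisely the computations in the paper's proof. The index caveat you flag is well-taken but is most cleanly resolved without touching the indexing of Theorem \ref{thm:successive_minima_formula_semiabelian} at all: by Theorem \ref{thm:ballay} together with Lemma \ref{semiabelian_okounkov_body}, $\zeta_{\abs}$ is the infimum over the Okounkov body of the Boucksom--Chen transform $-\widehat h(c(m))$, and since $\widehat h$ is a positive semidefinite quadratic form this concave function attains its infimum at a vertex of $\Delta$, giving $\zeta_{\abs} = -\max_{\text{vertex}}\widehat h(c(m))$ directly.
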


\begin{proof}
    By Theorem \ref{thm:successive_minima_formula_semiabelian}, the absolute minimum is given by the negative of $\max \widehat{h}(c(m))$ as $m$ ranges over the vertices of $\Delta$. On the vertices $v_0,\dots,v_t$ of $\Delta$, one has $c(v_0) = \phi(-q)$ and $c(v_i) = \phi((t+1)q_i-q)$ for $i=1,\dots,t$.
\end{proof}

\begin{lemma}
    The height of $\ov{G}$ can be computed as
    \[
        h_{\rho(D)^{\can}\otimes \pi^*\bar{\sM}}(\ov{G}) = - \frac{(d+1)\deg(\sM)}{(t+1)(t+2)} \left( \widehat{h}(q) + \sum^t_{i=1} \widehat{h}(q-(t+1)q_i)\right).
    \]
\end{lemma}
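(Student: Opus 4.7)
The plan is to invoke Theorem~\ref{thm:height_formula_semiabelian}, which expresses the height as $-(d+1)!\int_\Delta \widehat{h}(c(m))\,dm$, and then compute this integral in closed form by exploiting the quadratic nature of the N\'eron--Tate pairing together with the symmetry of $\Delta$. Since $c\colon M_\bbR \to A^\vee(\Kbar)\otimes\bbR$ is linear with $c(e_i) = \phi(q_i)$ and $\widehat{h}$ is a positive semidefinite quadratic form on $A^\vee(\Kbar)\otimes\bbR$, the composition $Q(m) := \widehat{h}(c(m))$ is itself a quadratic form on $M_\bbR$. For any $t$-simplex $S \subset M_\bbR$ with vertices $v_0, \ldots, v_t$, the standard moment identity $\int_\Lambda \lambda_i\lambda_j\,d\lambda = (1+\delta_{ij})/(t+2)!$ on the barycentric simplex $\Lambda$ yields
\[
\int_S Q(m)\,dm \;=\; \frac{\vol(S)}{(t+1)(t+2)}\left(\sum_{i=0}^t Q(v_i) + Q\!\left(\sum_{i=0}^t v_i\right)\right).
\]

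Next, I will compute the vertices of $\Delta = (t+1)\Delta^t - \sum_j e_j$ explicitly as $v_0 = -\sum_j e_j$ and $v_i = (t+1)e_i - \sum_j e_j$ for $i=1, \ldots, t$. These give $c(v_0) = -\phi(q)$ and $c(v_i) = \phi((t+1)q_i - q)$, so $Q(v_0) = \widehat{h}(q)$ and $Q(v_i) = \widehat{h}(q-(t+1)q_i)$ by the evenness of $\widehat{h}$. The decisive observation is that the coefficient of each basis vector $e_j$ in $\sum_{i=0}^t v_i$ equals $-1 + (t+1) - t = 0$, so $\sum_{i=0}^t v_i = 0$ and the cross term $Q(\sum v_i)$ vanishes. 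This cancellation reflects that $\Delta$ is the anticanonical polytope of $\bbP^t$ shifted so that its barycenter sits at the origin, and it is the only place where the specific shape of $\Delta$ enters in a nontrivial way.

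Combining with $\vol(\Delta) = (t+1)^t/t!$, the integral simplifies to
\[
\int_\Delta \widehat{h}(c(m))\,dm \;=\; \frac{(t+1)^t}{t!\,(t+1)(t+2)}\left(\widehat{h}(q) + \sum_{i=1}^t \widehat{h}(q-(t+1)q_i)\right),
\]
and multiplying by $-(d+1)!$ produces the claimed identity after rewriting the prefactor $(d+1)!\vol(\Delta)/((t+1)(t+2))$ in the form $(d+1)\deg(\sM)/((t+1)(t+2))$ using the degree formula for $\sL = \rho(D)^{\can}\otimes\pi^*\ov{\sM}$ on $\ov{G}$ (arising from the bundle BKK formula, since $\ac(m)$ is numerically trivial on $A$ and only the $i=0$ term contributes to $\sL^d$). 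The main delicacy is the combinatorial vanishing $\sum_i v_i = 0$; without it the integral would carry an extra quadratic cross term and the formula would fail to decompose as a clean sum over the vertices of $\Delta$.
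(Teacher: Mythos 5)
Your proposal follows the same route as the paper: reduce via Theorem~\ref{thm:height_formula_semiabelian} to the integral $\int_\Delta \widehat{h}(c(m))\,dm$, integrate the quadratic form over the simplex via a moment formula, observe that the vertices of $\Delta$ sum to zero so the cross term drops out, and then convert the prefactor. Your barycentric moment identity $\int_\Lambda \lambda_i\lambda_j\,d\lambda = (1+\delta_{ij})/(t+2)!$ is exactly the $r=2$ specialization of the Lasserre--Avrachenkov formula \cite{lasserre_avrachenkov_polytope_integration} that the paper cites, so the vertex-sum form of the integral and the centering argument $\sum_i v_i = 0$ coincide with the paper's computation.

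The one place where your argument does not close up is the final prefactor step. You assert that the bundle BKK degree formula yields $d!\,\vol(\Delta) = \deg(\sM)$, but it does not: since $\ac(m)$ is numerically trivial on $A$, the $i=0$ term of the bundle BKK theorem gives $\rho(\Delta)^t(\pi^*\sM)^g = t!\,\vol(\Delta)\deg(\sM)$, and hence
\[
\deg(\sL) = \sL^d = \binom{d}{t}\,\rho(\Delta)^t(\pi^*\sM)^g = \frac{d!\,\vol(\Delta)\,\deg(\sM)}{g!},
\]
which is the same answer one obtains by applying Theorem~\ref{thm:arithmetic_volume} to the product Okounkov body $\Delta\times\Delta_{A_\bullet}(\sM)$ as the paper does. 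This identity relates $\vol(\Delta)$ to $\deg(\sL)$, not to $\deg(\sM)$, and the factor $\vol(\Delta_{A_\bullet}(\sM)) = \deg(\sM)/g!$ coming from the abelian part of the Okounkov body must be carried through explicitly when passing from the formula in Theorem~\ref{thm:height_formula_semiabelian} to the final statement. As written, your derivation drops this factor and the equality $d!\,\vol(\Delta)=\deg(\sM)$ is false in general (already for $g=t=1$). Tracking the factor through gives a prefactor proportional to $\deg(\sL)$ rather than $\deg(\sM)$; you should check carefully which degree actually appears and make the bookkeeping match the normalization used in the statement.
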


\begin{proof}
    By Theorem \ref{thm:height_formula_semiabelian} we are reduced to computing the integral $\int_\Delta \widehat{h}(c(m))$. 
    
    Let $H$ be a symmetric multilinear form in $r$ entries on $\bbR^t$ and $\Delta$ a polytope spanned by $v_0,\dots,v_t$. Then, it is proven in \cite{lasserre_avrachenkov_polytope_integration} that
    \[
        \int_\Delta H(m,\dots,m) = \frac{\vol(\Delta)}{{t+r\choose r}} \sum_{0\leq i_1\leq\dots\leq i_r\leq t} H(v_{i_1},\dots,v_{i_r}).
    \]
    Let us specialize to the case that $r = 2$ and $\sum^t_{i=0} v_i = 0$. Then we obtain
    \[
        \int_\Delta H(m,\dots,m) = \frac{\vol(\Delta)}{(t+1)(t+2)} \sum^t_{i=0} H(v_i,v_i)
    \]
    by subtracting $\frac{\vol(\Delta)}{2{t+r\choose r}}H(v_0+\dots+v_t,v_0+\dots+v_t) = 0$. We apply this to $\Delta$ as in the setup and $H$ the polarization of $\widehat{h}(c(m))$. We lastly need the geometric identity from Theorem \ref{thm:arithmetic_volume}.
\end{proof}

\medskip
\sloppy
\emergencystretch=1em
\printbibliography

\end{document}